\definecolor{mycolor}{HTML}{750000}
\Crefname{subsection}{Subsection}{Subsections}
\Crefname{question}{Question}{Questions}
\Crefname{subsubsection}{Paragraph}{Paragraphs}
\newcommand{\customref}[2]{\hyperref[#2]{#1}}
\newcommand\iref[2]{\customref{\Cref*{#1}~\ref*{#2}}{#1}}
\newenvironment*{lsubstack}{%
  \setlength\arraycolsep{0pt}%
  \begin{array}{>{\scriptstyle}l}}{\end{array}}
\renewcommand{\epsilon}{\varepsilon}
\renewcommand{\phi}{\varphi}
\renewcommand{\hat}{\widehat}
\renewcommand{\tilde}{\widetilde}
\newcommand{\N}{\mathbb{N}}
\newcommand{\Z}{\mathbb{Z}}
\newcommand{\Q}{\mathbb{Q}}
\newcommand{\C}{\mathbb{C}}
\newcommand{\R}{\mathbb{R}}
\newcommand{\lcm}{\mathrm{lcm}} 
\newcommand{\Pm}{\mathcal{P}}
\newcommand{\Pex}{\Pm^{\mathrm{ex}}}
\newcommand{\Res}{\mathrm{Res}} 
\newcommand{\tr}{\mathrm{tr}} 
\newcommand{\End}{\mathrm{End}} 
\newcommand{\Aut}{\mathrm{Aut}} 
\newcommand{\op}{\mathrm{op}} 
\newcommand{\Cent}{\mathrm{Cent}} 
\newcommand{\Br}{\mathrm{Br}} 
\newcommand{\Gal}{\mathrm{Gal}} 
\newcommand{\Inn}{\mathrm{Inn}} 
\newcommand{\Out}{\mathrm{Out}} 
\newcommand{\Sym}{\mathfrak{S}} 
\newcommand{\card}[1]{\left| #1 \right|} 
\newcommand{\ord}{\mathrm{ord}} 
\newcommand{\Frob}{\mathrm{Frob}} 
\newcommand{\verti}{\, \middle \vert \,}
\newcommand{\Nm}[1]{|\!|#1|\!|} 
\newcommand{\ind}{\mathrm{ind}} 
\newcommand{\Mu}{\frac M u}
\renewcommand{\O}{\mathcal{O}} 
\newcommand{\e}{\mathrm{e}} 
\newcommand{\inv}[1]{\mathrm{inv}\leftl(#1\rightr)} 
\newcommand{\into}{\hookrightarrow}
\newcommand{\simto}{\overset\sim\to}
\newcommand{\avg}{\mathrm{avg}} 
\newcommand{\cycgcd}{\mathrm{cycgcd}}
\newcommand{\ram}{\mathrm{ram}}
\newcommand{\charindex}{k}
\def\psichi{{\langle\psi, \chi\rangle}}
\newcommand\otspam{\mathrel{\reflectbox{\ensuremath{\mapsto}}}}
\newcommand{\leftl}{\mathopen{}\mathclose\bgroup\left}
\newcommand{\rightr}{\aftergroup\egroup\right}
\newcounter{mycounter}[section]
\theoremstyle{plain}
\newtheorem{theorem}[mycounter]{Theorem}
\newtheorem{corollary}[mycounter]{Corollary}
\newtheorem{proposition}[mycounter]{Proposition}
\newtheorem{lemma}[mycounter]{Lemma}
\theoremstyle{remark}
\newtheorem{remark}[mycounter]{Remark}
\theoremstyle{definition}
\newtheorem{definition}[mycounter]{Definition}
\newtheorem{question}[mycounter]{Question}
\titleformat{\section}[block]{\normalfont\centering\scshape\large}{\thesection.}{1em}{}
\titleformat{\subsection}[block]{\normalfont\large\bf}{\thesubsection.}{1em}{\bf}
\titleformat{\subsubsection}[runin]{\normalfont}{\bf\thesubsubsection.}{0.3em}{\bf}
\patchcmd{\@maketitle}{\LARGE}{\huge}{\typeout{OK 1}}{\typeout{Failed 1}}
\patchcmd{\@maketitle}{\large \lineskip}{\Large \lineskip}{\typeout{OK 2}}{\typeout{Failed 2}}
\title{
  Asymptotics of extensions of simple $\Q$-algebras
}
\author{
  Fabian Gundlach%
  \footnote{
    Universität Paderborn, Fakultät EIM, Institut für Mathematik, Warburger Str. 100, 33098 Paderborn, Germany.
  }
  \footnote{
    Email: \texttt{fabian.gundlach@uni-paderborn.de}.
  }
  \and
  Béranger Seguin%
  $^*$%
  \footnote{
    Email: \texttt{math@beranger-seguin.fr}.
  }
}
\renewenvironment{abstract}{%
\hfill\begin{minipage}{0.95\textwidth}
\rule{\textwidth}{1pt} \textsc{Abstract.}}
{\par\noindent\rule{\textwidth}{1pt}\end{minipage}}
\begin{document}

\maketitle{}

\begin{abstract}
  We answer various questions concerning the distribution of extensions of a given central simple algebra $K$ over a number field.
  Specifically, we give asymptotics for the count of inner Galois extensions $L/K$ of fixed degree and center with bounded discriminant.
  We also relate the distribution of outer extensions of~$K$ to the distribution of field extensions of its center~$Z(K)$.
  This paper generalizes the study of asymptotics of field extensions to the noncommutative case in an analogous manner to the program initiated by Deschamps and Legrand to extend inverse Galois theory to division algebras.
  
  \bigskip

	\textbf{MSC 2010:} 11N45 $\cdot$ 12E15 $\cdot$ 11R52
\end{abstract}

{
  \hypersetup{linkcolor=black}
  \tableofcontents{}
}
\hfill\rule{0.95\textwidth}{1pt}

\section{Introduction}

\subsection{Context}

The study of statistics of field extensions turns inverse Galois theory into a quantitative problem, replacing the question of the existence of extensions with a given Galois group by the question of their asymptotic distribution.
The main conjecture in this area was introduced by Malle in \cite{malle1,malle2} as a proposed generalization of results of Mäki and Wright for abelian extensions of number fields \cite{maki,wright}.
This conjecture has received a lot of attention and has been studied using various methods: for some recent articles, see for example
\cite{
  wang-symmetric-times-abelian,%
  klnilp,%
  etw,%
  eszb,%
  koymans-pagano,%
  motte%
}.
Another active area concerns the distribution of extensions of fixed degree without specifying a Galois group, see
\cite{
  cohncube,%
  davenport,%
  bharga-quart,%
  bharga-quint%
} for small degrees and
\cite{
  schmidt,%
  ev,%
  couvnf,%
  bsw,%
  lot%
} for all degrees.

Noncommutative Galois theory was developed in \cite{jac40,jac47,cartan47}.
We will define the notions we need, but readers wanting to learn more may refer to \cite{cohn,jacobson} or to the introductory sections of \cite{deschamps-petites,deschamps}.
In \cite{desleg}, a program was initiated%
\footnote{
  One may argue that first steps towards this program were taken by research concerning \emph{admissible} groups, cf. for example \cite{schacher,harbater}.
}
to extend inverse Galois theory to division~rings.
Since then, this question has been vastly explored, and fundamental results were obtained in the articles \cite{alp,beh21,deschamps-beh,bdl,leg22b,leg22,leg24}.
This article aims, in a similar manner, to study the quantitative aspects of extensions of noncommutative algebras.

\subsection{Focus of this work}

Our objects of study are (finite-dimensional) simple $\Q$-algebras.
If $K$ is such an algebra, its center~$Z(K)$ is a number field.

\begin{definition}
  \label{def:extension}
  An \emph{extension} of a simple $\Q$-algebra $K$ is a simple $\Q$-algebra~$L$ equipped with an injective $\Q$-algebra homomorphism $K \into L$.
  We usually think of $K$ as a subalgebra of $L$ via this embedding.
  An \emph{isomorphism} between extensions $L_1$ and $L_2$ of $K$ with embeddings $e_1:K \into L_1$ and $e_2:K \into L_2$ is a $\Q$-algebra isomorphism $i:L_1 \simto L_2$ such that $e_2 = i \circ e_1$.
\end{definition}

Let $L/K$ be an extension of simple $\Q$-algebras.
We denote by $\Aut(L/K)$ the automorphism group of $L/K$, i.e., the set of $\Q$-algebra automorphisms of $L$ which act trivially on $K$.
The \emph{degree} is defined as $[L:K] \coloneqq \frac{\dim_{\Q}(L)}{\dim_{\Q}(K)}$.
If $K$ is a division algebra, the degree agrees with the dimension of $L$ both as a left $K$-vector space and as a right $K$-vector space.
In \Cref{ssn:discriminant}, we define a quantity $d(L/K)\in\Q_{>0}$, which we treat as ``(the absolute value of) the norm of the relative discriminant of~$L/K$''.
One may then ask the following question:
\begin{question}
  \label{qn:main}
  Let $K$ be a simple $\Q$-algebra and $n \geq 2$.
  How many isomorphism classes of extensions~$L/K$ of degree $n$ are there which satisfy the bound $d(L/K) \leq X$, asymptotically as $X \to +\infty$?
\end{question}

\Cref{qn:main} is very general.
For instance, the extensions \Cref{qn:main} aims to count include field extensions of number fields, which are notoriously hard to parametrize.
Instead of studying this general problem, we address two more specific questions, focusing only on certain types of extensions.
More precisely, we study the asymptotics of ``inner Galois extensions'' and of ``outer extensions'', defined below.
The former question turns out to admit a complete answer which we give in \Cref{thm:main-inner}, whereas the latter reduces to well-studied questions concerning the distribution of commutative field extensions, as we explain in \Cref{sn:outer}.
These two types of extensions are representative of all extensions of $K$, as by \Cref{thm:general-bijection} every extension $L/K$ of simple algebras splits ``naturally'' into a tower $L/L'/K$ where $L/L'$ is inner Galois and $L'/K$ is outer.
(Here, $L'$ is the double-centralizer of $K$ in $L$.)
This fact could be used to address more general variants of \Cref{qn:main}, as we briefly discuss in \Cref{sn:gen-ext}.

Throughout the article, we make a special effort to include simple algebras which are not division algebras in all discussions, but we also systematically prove the corresponding statements focusing exclusively on division algebras.

\subsubsection{Inner Galois extensions.}

In \Cref{sn:inner}, we restrict our attention to \emph{inner Galois extensions} of a simple $\Q$-algebra $K$, where an extension $L/K$ is:
\begin{itemize}
  \item
    \emph{inner} if all of its automorphisms are inner, i.e., given by conjugation by an element of $L^\times$;
  \item
    \emph{Galois} if $K$ equals the algebra $L^{\Aut(L/K)} \coloneqq \left\{ x \in L \,\big\vert\, \forall \sigma \in \Aut(L/K), \sigma(x) = x\right\}$.
\end{itemize}
As we explain in \Cref{lem:inner-galext-incl-center}, an extension $L/K$ is inner Galois if and only if $Z(L)\subseteq Z(K)$.

In \Cref{thm:main-inner}, we give asymptotics for the number of inner Galois extensions $L/K$ with given degree $n$ and given center $Z=Z(L)$ that satisfy the discriminant bound $d(L/Z)\leq X$.
These asymptotics take the form
$
  C X^{1/a} (\log X)^{b-1}
$
for explicitly given constants $a$ and $b$, and for a real number $C$ which is positive unless no inner Galois extension of $K$ of degree $n$ with center $Z$ exists.

Fixing the center lets us reduce the problem to a question about central simple $Z$-algebras.
The count of all inner Galois extensions of $K$ of degree $n$ (with any center) can in principle be obtained by summing the resulting asymptotics over the finitely many subfields $Z$ of $Z(K)$.

In the case $K=Z(K)=Z$, the extensions we are counting are exactly the central simple $Z$-algebras of degree $n$ satisfying the discriminant bound $d(L/Z)\leq X$.
This special case of \Cref{thm:main-inner} was already established in \cite[Theorem~1.5 and Lemma~3.2]{lrpt}.

Previous work on this question also includes \cite[Corollary~4]{fks}, where (combined with  \cite[\href{https://stacks.math.columbia.edu/tag/074Z}{Theorem~074Z}]{stacks-project}) the authors prove that infinitely many central simple $Z$-algebras $L$ contain a given commutative field extension $K=Z(K)$ of $Z$ as a maximal subfield (i.e., $[L:Z] = [K:Z]^2$).
The definition of the main exponent $1/a$ in our final asymptotics relies on a group-theoretical lemma they prove to this end (\Cref{lem:FKS}).

Our proof strategy is similar to that of \cite{lrpt}: we reduce the problem to counting central simple $Z$-algebras satisfying certain local conditions, we parametrize these algebras by elements of the Brauer group $\Br(Z)$, and we set up a Dirichlet series counting them.
The local-global principle for Brauer groups lets us write the Dirichlet series as a sum of finitely many Euler products.
Finally, we determine its rightmost ``pole'' by comparison with Artin L-functions and apply a Tauberian theorem to prove \Cref{thm:main-inner}.
Additional computations ensure that the leading coefficients of our asymptotics are positive when there is at least one such extension.
We also give the asymptotics when~$K$ is a division algebra and we count only extensions which are division algebras.

We also prove \Cref{thm:main-inner-prod-ram}, which is a variant of \Cref{thm:main-inner} in which discriminants are replaced by products of ramified primes.
The proof strategy is identical.

\subsubsection{Outer extensions.}

An extension $L$ of $K$ is \emph{outer} if it has no nontrivial inner automorphisms.
In \Cref{sn:outer}, we prove \Cref{thm:charac-outer}, which shows that outer extensions $L/K$ are exactly those of the form $L = F'\otimes_{Z(K)}K$ for a field extension $F'$ of $Z(K)$.
This generalizes a theorem of Deschamps and Legrand \cite[Corollaire~2]{desleg}.
A consequence of this theorem is that the problem of counting outer extensions of $K$ reduces to the notoriously difficult problem of counting field extensions of $Z(K)$.
Additional computations let us characterize extensions which are division algebras and compute discriminants in terms of invariants of the extension $F'/Z(K)$ (\Cref{thm:tensor-disc-divalg}).
We give a few applications of these ideas in \Cref{subsn:counting-outer}.

\subsubsection{General extensions.}

In \Cref{sn:gen-ext}, we briefly discuss the possibility of adapting the methods of \Cref{sn:inner,sn:outer} in order to count more general extensions $L/K$ by decomposing them into inner and outer extensions.
We highlight a few helpful facts, but also analytic difficulties specific to this problem.

\subsection{Preliminaries and notation}

\subsubsection{Brauer groups of local and global fields.}
\label{sssn:brauer-refresher}

\paragraph{Central simple algebras.}
In this article, simple algebras are systematically assumed to be finite-dimensional over their center.
If $F$ is a field and $K$ is a central simple $F$-algebra, we denote by $[K] \in \Br(F)$ the class of~$K$ in the Brauer group of $F$.
The \emph{index} of $K$ is the integer $\ind(K)$ such that~$K$ is isomorphic to a matrix algebra over a central division $F$-algebra of dimension $\ind(K)^2$.
Note that $K$ is a division algebra if and only if $[K:F] = \ind(K)^2$.
The \emph{exponent} of $K$ is the order of $[K]$ in $\Br(F)$.
When $F$ is a global or local field, which is systematically true in this article, the exponent of $K$ equals its index \cite[(31.4),~(32.19)]{reiner} (see \cite{deschamps-indices} for counterexamples in the general case).

\paragraph{Brauer groups of local fields.}

Brauer groups of local fields admit explicit descriptions:
\begin{itemize}
  \item
    The Brauer group of $\C$ is trivial.
    We identify it with the trivial subgroup of $\Q/\Z$ via the trivial group homomorphism $\mathrm{inv} : \Br(\C) \to \Q/\Z$.
  \item
    The Brauer group of $\R$ is isomorphic to $\Z/2\Z$, generated by the class of the algebra of Hamilton quaternions over~$\R$.
    We identify it with the subgroup $\{0,\frac 1 2\}$ of $\Q/\Z$ via the group homomorphism $\mathrm{inv} : \Br(\R) \to \Q/\Z$ mapping the nontrivial element to $\frac 1 2$.
  \item
    If $F$ is a non-archimedean local field, then there is an isomorphism $\mathrm{inv} : \Br(F) \overset\sim\to \Q/\Z$ \cite[(31.8)]{reiner}.
\end{itemize}

The \emph{Hasse invariant} of a central simple algebra over a local field is the image in $\Q/\Z$ of its class.

\paragraph{Brauer groups of global fields.}

Assume $F$ is a global field.
If $K$ is a central simple $F$-algebra and~$v$ is a place of $F$, we denote by $F_v$ the completion of $F$ at $v$ and by $K_v \coloneqq K \otimes_{F} F_v$ the completion of~$K$ at $v$, which is a central simple algebra over the local field $F_v$.
We call the element $\inv{K_v} \in \Q/\Z$ the \emph{local invariant} of $K$ at $v$.

Let $\Pm$ be the set of all places of $F$.
The local-global principle for Brauer groups of global fields (the Albert--Brauer--Hasse--Noether theorem) is summed up by the exact sequence \cite[(32.13)]{reiner}:
\begin{equation}
  \label{ses-brauer}
  1
  \to
  \Br(F)
  \to
  \bigoplus_{v \in \Pm}
    \Br(F_v)
  \underset{{\scriptstyle\sum_v}\mathrm{inv_v}}{\longrightarrow}
  \Q/\Z
  \to
  1
\end{equation}
where ${\scriptstyle\sum_v}\mathrm{inv_v}$ is the sum in $\Q/\Z$ of the Hasse invariants of the coordinates of an element of
$
  \bigoplus_{v \in \Pm}
      \Br(F_v)
$.
In particular, a central simple $F$-algebra is uniquely determined (up to isomorphism) by its dimension $M^2$ and by the collection of its local invariants in $\Q/\Z$ (indexed by places of $F$), on which the only constraints are the following:
\begin{itemize}
  \item
    all local invariants have order dividing $M$ in $\Q/\Z$;
  \item
    all but finitely many local invariants are trivial;
  \item
    the local invariants at complex places are trivial;
  \item
    the local invariants at real places are either trivial or equal to $\frac 1 2$;
  \item
    the sum of the local invariants over all places of $F$ is trivial.
\end{itemize}


\subsubsection{Terminology and notation.}
\label{sssn:terminology}

If $S$ is a finite set, we denote by $\card{S}$ its cardinality.
We denote by $\e : \C \to \C$ the function $z \mapsto \exp(2 \pi i z)$.
If $R$ is a ring, we denote by $\mathfrak{M}_n(R)$ the algebra of $n \times n$ matrices over~$R$ and by $\End_R(A)$ the algebra of endomorphisms of a (left or right) $R$-module $A$.
We denote by $\Nm{p}$ the norm of a prime $p$ of a number field $F$.

For $n \in \N$, we see $\Z/n\Z$ as a subgroup of $\Q/\Z$, namely that of elements whose order divides~$n$: if $a \in \Z/n\Z$, we denote by $\frac a n$ the corresponding element of $\Q/\Z$.
When speaking about elements of $\Z/n\Z$, the phrases ``$a$ divides $b$'', ``$a$ is the greatest common divisor (resp. the least common multiple) of $b$ and $c$'' and ``$b$ and $c$ are coprime'' must be interpreted as statements about the corresponding principal ideals, identified with positive divisors of $n$.
For instance, the greatest common divisor of~$b$ and $c$ is the unique positive divisor of~$n$ generating the same ideal of $\Z/n\Z$ as~$b$ and $c$ together, i.e., $\gcd\leftl(\tilde b, \tilde c, n\rightr)$ where $\tilde b, \tilde c \in \N$ are arbitrary representatives of $b,c$.

If $L$ is a $\Q$-algebra and $K$ is a subalgebra of $L$, we use the following notation:
\begin{itemize}
  \item
    The \emph{centralizer} $\Cent_L(K)$ is the subalgebra of $L$ consisting of those elements that commute with all elements of $K$;
  \item
    $\Inn(L/K)$ is the normal subgroup of $\Aut(L/K)$ consisting of inner automorphisms, i.e., those corresponding to conjugation by an element of $\Cent_L(K)^{\times}$.
\end{itemize}
Two elements of $\Cent_L(K)^{\times}$ induce the same inner automorphism if and only if they differ by an element of the center of $L$.
Therefore, $\Inn(L/K) \simeq \Cent_L(K)^{\times}/Z(L)^{\times}$.

\subsection{Discriminants}
\label{ssn:discriminant}

We associate to an extension $L/K$ of simple $\Q$-algebras a number $d(L/K)$, which we use as a substitute for the absolute value of the norm of the relative discriminant of $L$ over~$K$.
When $L/K$ is an extension of number fields, $d(L/K)$ has precisely that meaning.

\subsubsection{The case $K \subseteq Z(L)$.}
\label{par:discr-centalg}

Assume $K$ is contained in the center of $L$.
In this case, there is a well-defined notion of discriminant: the number field $K$ has a unique maximal $\Z$-order, namely its ring of integers $\O_K$, and one can choose a maximal $\O_K$-order $\Lambda$ in $L$ \cite[(10.4)]{reiner}.
Although $\Lambda$ is not unique in general, the discriminant of $\Lambda/\O_K$ does not depend on the choice of $\Lambda$ \cite[(25.3)]{reiner}.
Therefore, we simply denote by $d(L/K)$ the integer obtained as the absolute value of the norm of the discriminant of $\Lambda/\O_K$, for any choice of maximal order $\Lambda$ in $L$.

First, consider the case $K = Z(L)$.
Then, $L$ is a central simple $K$-algebra of some dimension~$m^2$.
For each prime $p$ of $K$, let $\lambda_p \in \Z/m\Z$ be such that $\inv{L_p} = \frac{\lambda_p}{m}$.
The local index $m_p = \ind(L_p)$ is the reduced denominator of this fraction, i.e., $m_p = \frac{m}{\gcd(m, \, \lambda_p)}$.
By comparing dimensions, we see that~$L_p$ is an algebra of $\kappa_p \times \kappa_p$-matrices over a central division $K_p$-algebra of dimension $m_p^2$ for $\kappa_p = \frac{m}{m_p}$.
A formula for the norm of the relative discriminant of $L/Z(L)$ follows directly from \cite[(25.10)]{reiner}:
\begin{equation*}
  d\big(L/Z(L)\big)
  =
  \left(
    \prod_p
    \Nm{p}^{(m_p-1)\kappa_p}
  \right)^m
\end{equation*}
where the product is taken over primes $p$ of $Z(L)$.
This can be rewritten in the following ways:
\begin{equation}
  \label{eqn:discr-csa}
  d\big(L/Z(L)\big)
  =
  \prod_p
  \Nm{p}^{m^2\left(1-\frac 1 {m_p}\right)}
  =
  \prod_p
  \Nm{p}^{m \big( m- \gcd(m,\, \lambda_p) \big)}.
\end{equation}

When $K$ is a subfield of $Z(L)$, the computation of~$d(L/K)$ reduces to the central case using the following ``relative discriminant formula'', which is a special case of \cite[Exercise 25.1a]{reiner}:
\begin{equation}
  \label{eqn:reldisc-form-center}
    d(L/K)
    =
    d\big(
      L/Z(L)
    \big)
    \cdot
    d\big(
      Z(L)/Z(K)
    \big)^{[L:Z(L)]}.
\end{equation}

\subsubsection{A general definition.}

To measure the ``size'' of a general extension of simple $\Q$-algebras, we use the following quantity, which is both natural (cf. \Cref{prop:reldisc}) and mysterious (cf.~\Cref{rk:disc-notint}):

\begin{definition}
  \label{def:discriminant}
  Let $L/K$ be an extension of simple $\Q$-algebras.
  We denote by $d(L/K)$ the following positive rational number:
  \[
    d(L/K)
    =
    \frac{
      d(L/\Q)
    }{
      d(K/\Q)^{[L:K]}
    }.
  \]
\end{definition}

\begin{proposition}
  \label{prop:reldisc}
  \Cref{def:discriminant} is the only possible definition of a height $d(L/K)$ which coincides with the norm of the relative discriminant when $K \subseteq Z(L)$, and which satisfies the relative discriminant formula $d(M/K) = d(M/L) d(L/K)^{[M:L]}$ for every tower of extensions $M/L/K$.
\end{proposition}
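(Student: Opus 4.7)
The plan is to prove both uniqueness and existence by applying the two axioms to the tower $L|K|\Q$. The essential observation is that $\Q$ lies in the center of every simple $\Q$-algebra, so the normalization axiom (coincidence with the classical norm of the relative discriminant when $K \subseteq Z(L)$) completely determines $d(A|\Q)$ for every simple $\Q$-algebra $A$, namely as the classical absolute discriminant from \Cref{par:discr-centalg}.

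For uniqueness, suppose $d'$ is any invariant satisfying both axioms. By the observation above, $d'(L|\Q)$ and $d'(K|\Q)$ both equal their classical counterparts. The tower formula applied to $L|K|\Q$ reads
\[
    d'(L|\Q) \;=\; d'(L|K) \cdot d'(K|\Q)^{[L:K]},
\]
which, solving for $d'(L|K)$, yields exactly the formula of \Cref{def:discriminant}.

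For existence, I would verify the two axioms directly starting from the formula of \Cref{def:discriminant}. The tower formula $d(M|K) = d(M|L) \cdot d(L|K)^{[M:L]}$ is a routine algebraic rearrangement using the multiplicativity $[M:K] = [M:L] \cdot [L:K]$ of degrees, after substituting the three defining ratios and cancelling. For the normalization axiom, when $K \subseteq Z(L)$, one applies the classical relative discriminant tower identity $d(L|\Q) = d(L|K) \cdot d(K|\Q)^{[L:K]}$ (with all three discriminants in the classical sense of \Cref{par:discr-centalg}) to conclude that our definition recovers the classical $d(L|K)$. The main --- and rather mild --- obstacle is establishing this classical tower identity in the required generality, where $K \subseteq Z(L)$ but $K$ need not equal $Z(L)$; this is a known property of relative discriminants of maximal orders, namely \cite[Exercise 25.1a]{reiner}, already invoked in the paper as the source of formula \pef{eqn:reldisc-form-center}. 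Given that ingredient, the rest of the argument is formal bookkeeping.
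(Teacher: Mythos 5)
Your proof is correct and follows essentially the same route as the paper: uniqueness comes from applying the tower formula to $L|K|\Q$ and using the normalization axiom to pin down $d'(L|\Q)$ and $d'(K|\Q)$, and existence reduces to verifying the two axioms against \Cref{def:discriminant}, with the normalization axiom resting on the classical tower identity coming from \cite[Exercise 25.1a]{reiner} and the commutative discriminant tower formula. Your writeup is somewhat more explicit than the paper's terse proof in separating the formal verification of the tower axiom (a pure algebraic cancellation using $[M:K]=[M:L][L:K]$) from the verification of the normalization axiom (which genuinely needs \pef{eqn:reldisc-form-center} combined with the commutative case), but the underlying ideas are identical.
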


\begin{proof}
  The uniqueness follows from the case $K=\Q$ of the relative discriminant formula for an arbitrary extension $M/L$.
  The relative discriminant formula for a tower $M/L/K$ of extensions follows formally from the ``usual'' relative discriminant formula for commutative fields combined with \Cref{eqn:reldisc-form-center}.
\end{proof}

\begin{remark}
  \label{rk:disc-notint}
  The number $d(L/K)$ is in general not an integer.
  For example, no prime but $2$ is ramified in the $\Q$-algebra $L = \Q(i,j,k)$ of Hamilton quaternions, so $d(L/\Q)$ is a power of $2$, but $L$ contains the commutative subfield $K = \Q(i+j+k) \simeq \Q(\sqrt{-3})$ in which $3$ is ramified, so $3 \mid d(K/\Q)$.
  Hence, the denominator of $d(L/K)$ is divisible by $3$.
\end{remark}

\subsection{Acknowledgements}

This work was supported by the Deutsche Forschungsgemeinschaft (DFG, German Research Foundation) --- Project-ID 491392403 --- TRR 358 (project A4).
The authors thank Bruno Deschamps, Markus Kirschmer, and the anonymous referee for helpful discussions and feedback.

\section{Inner Galois extensions}
\label{sn:inner}

In this section, we state and prove \Cref{thm:main-inner}, which gives asymptotics for the distribution of inner Galois extensions of a given (finite-dimensional) simple $\Q$-algebra, of fixed degree and center.

\Cref{ssn:facts-inngal} contains useful lemmas concerning inner Galois extensions.
In \Cref{ssn:notation-and-mainthm}, we fix some notation and state the main theorem, \Cref{thm:main-inner}.

The proof of \Cref{thm:main-inner} is divided into \Cref{ssn:reduction-combi,ssn:key-dirichlet,ssn:analytic-props,ssn:positivity-leading-coeff,ssn:restriction-divalg}: in \Cref{ssn:reduction-combi}, we rephrase the problem in combinatorial terms; in \Cref{ssn:key-dirichlet}, we set up the Dirichlet series for this counting problem; in \Cref{ssn:analytic-props}, we describe analytic properties of the Dirichlet series and apply a Tauberian theorem; in \Cref{ssn:positivity-leading-coeff}, we check that the leading coefficient in our estimates is positive under the assumption that an extension exists (this proves the main statement \iref{thm:main-inner}{thm:main-inner-i}); finally, in \Cref{ssn:restriction-divalg}, we establish \iref{thm:main-inner}{thm:main-inner-ii}, which is the result when one excludes extensions which are not division algebras.

In \Cref{subsn:prod-ram}, we explain how to adapt the proof in order to prove \Cref{thm:main-inner-prod-ram}, a variant of \Cref{thm:main-inner} where the height by which we count is the product of ramified primes.
Finally, in \Cref{subsn:criteria-existence}, we give criteria for the existence of an extension as in \Cref{thm:main-inner}.

\subsection{General facts about inner Galois extensions}
\label{ssn:facts-inngal}

In this subsection, we establish general properties of inner Galois extensions.
We begin with a characterization (\Cref{lem:inner-galext-incl-center}), explain why inner Galois extensions of a simple algebra~$K$ with center~$Z$ can be identified with central simple $Z$-algebras in which $K$ embeds (\Cref{prop:you-can-forget-the-embedding}), and prove a criterion to decide whether there is an embedding between two simple algebras (\Cref{lem:csa-inclusion-criterion}).

We first prove the two following lemmas, which are not specific to inner Galois extensions:

\begin{lemma}
  \label{lem:non-eigenvalue}
  Let $K$ be a (finite-dimensional) algebra over an infinite field $F$.
  For every $x \in K$, there is a $\lambda \in F$ such that $x - \lambda$ is invertible.
\end{lemma}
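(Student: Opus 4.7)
The plan is to reduce invertibility in $K$ to a linear algebra question over $F$. For any $y \in K$, consider the $F$-linear map $L_y : K \to K$ given by left multiplication, $L_y(z) = yz$. I would first verify that, because $K$ is finite-dimensional over $F$, the element $y$ is invertible in $K$ if and only if $L_y$ is invertible as an element of $\End_F(K)$. The forward implication is immediate. For the converse, if $L_y$ is bijective, then there exists $z \in K$ with $yz = 1$; the identity $y(zy - 1) = (yz)y - y = 0$ together with injectivity of $L_y$ then forces $zy = 1$, so $y$ is a two-sided unit.

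Applying this to $y = x - \lambda \cdot 1_K$ reduces the lemma to showing that $L_x - \lambda \cdot \id_K \in \End_F(K)$ is invertible for some $\lambda \in F$. Its determinant $P(\lambda) = \det(L_x - \lambda \cdot \id_K)$ is, up to sign, the characteristic polynomial of $L_x$, hence a polynomial in $\lambda$ of degree $\dim_F K$. Such a polynomial has at most $\dim_F K$ roots in $F$, and since $F$ is infinite, there is some $\lambda \in F$ with $P(\lambda) \neq 0$. For such a $\lambda$, the linear map $L_{x - \lambda}$ is invertible, hence so is $x - \lambda$ in $K$ by the equivalence established above.

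There is no real obstacle here; the only point requiring attention is the equivalence between invertibility of $y$ in $K$ and invertibility of $L_y$ in $\End_F(K)$, which genuinely uses the finite-dimensionality hypothesis. Everything else is routine linear algebra, and the infinitude of $F$ is exactly what one needs to avoid the finitely many ``bad'' values of $\lambda$.
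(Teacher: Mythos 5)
Your proof is correct and takes the same approach as the paper: realize $K$ inside $\End_F(K)$ via left multiplication, observe that the characteristic polynomial of $L_x$ has finitely many roots, and use the infinitude of $F$ to find a $\lambda$ avoiding all of them. The one place you go beyond the paper is worth noting: the paper passes directly from ``$x-\lambda$ is an invertible matrix'' to ``$x-\lambda$ is invertible in $K$'' without comment, whereas you explicitly prove that for $y$ in a finite-dimensional $F$-algebra, invertibility of $L_y$ in $\End_F(K)$ implies invertibility of $y$ in $K$ (via $L_y(z)=1$ and injectivity of $L_y$ forcing $zy=1$). This is exactly the step the paper leaves implicit, and it does genuinely use finite-dimensionality; your write-up is slightly more careful on this point, but the argument is the same.
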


\begin{proof}
  Embedding $K$ into an algebra of matrices over $F$ lets one see $x$ as a square matrix with coefficients in $F$.
  Since its characteristic polynomial has finitely many roots and $F$ is infinite, there is an element $\lambda \in F$ such that $x - \lambda$ is invertible.
\end{proof}

\begin{lemma}
  \label{lem:fixed-under-inn-centcent}
  Let $L/K$ be an extension of simple $\Q$-algebras.
  Then $L^{\Inn(L/K)} = \Cent_L(\Cent_L(K))$.
\end{lemma}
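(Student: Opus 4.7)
The plan is to unwind the definitions and reduce the statement to a direct consequence of \Cref{lem:non-eigenvalue}. Recall that $\Inn(L|K)$ consists of the automorphisms $\sigma_u : x \mapsto u x u^{-1}$ as $u$ ranges over $\Cent_L(K)^\times$. Hence a given $x \in L$ lies in $L^{\Inn(L|K)}$ if and only if $x$ commutes with every \emph{unit} of $\Cent_L(K)$, whereas $x$ lies in $\Cent_L(\Cent_L(K))$ if and only if $x$ commutes with every \emph{element} of $\Cent_L(K)$. The whole content of the lemma is therefore that, inside $\Cent_L(K)$, commuting with all units is the same as commuting with all elements.

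The inclusion $\Cent_L(\Cent_L(K)) \subseteq L^{\Inn(L|K)}$ is then immediate from the remark that $\Cent_L(K)^\times \subseteq \Cent_L(K)$.

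For the reverse inclusion, I would take $x \in L^{\Inn(L|K)}$ and an arbitrary $y \in \Cent_L(K)$, and show that $x$ commutes with $y$ by a scalar shift. The subalgebra $\Cent_L(K)$ is a finite-dimensional $\Q$-algebra, so \Cref{lem:non-eigenvalue} applied to it yields a $\lambda \in \Q$ for which $y - \lambda$ is invertible in $\Cent_L(K)$, i.e., $y - \lambda \in \Cent_L(K)^\times$. By assumption, $x$ commutes with $y - \lambda$, and since $\lambda$ is central, $x$ therefore commutes with $y$ itself. As $y$ was arbitrary in $\Cent_L(K)$, we conclude $x \in \Cent_L(\Cent_L(K))$.

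There is no real obstacle: the only subtle point is making sure \Cref{lem:non-eigenvalue} is applied to $\Cent_L(K)$ rather than to $L$, so that the resulting element $y - \lambda$ is actually a unit of $\Cent_L(K)$ and therefore gives rise to a legitimate element of $\Inn(L|K)$. That this is legitimate rests on $\Cent_L(K)$ being a finite-dimensional $\Q$-algebra, which is immediate, and on $\Q$ being infinite, which is obvious.
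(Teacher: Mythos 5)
Your proposal is correct and follows essentially the same path as the paper's proof: the easy inclusion from $\Cent_L(K)^\times \subseteq \Cent_L(K)$, and the converse by applying \Cref{lem:non-eigenvalue} to the finite-dimensional $\Q$-algebra $\Cent_L(K)$ to shift an arbitrary $y \in \Cent_L(K)$ into a unit. You are right to flag that the lemma must be applied to $\Cent_L(K)$ rather than to $L$; the paper does the same thing, just more tersely.
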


\begin{proof}
  Elements of $\Inn(L/K)$ are given by conjugation by elements of $\Cent_L(K)^\times$.
  Thus, $L^{\Inn(L/K)} = \Cent_L(\Cent_L(K)^\times)$.
  In particular, $\Cent_L(\Cent_L(K)) \subseteq L^{\Inn(L/K)}$.
  Conversely, if $x \in L^{\Inn(L/K)}$ and $y \in \Cent_L(K)$, use \Cref{lem:non-eigenvalue} to pick a $\lambda \in \Q$ such that $y - \lambda I \in \Cent_L(K)^\times$; since $x$ belongs to $L^{\Inn(L/K)} = \Cent_L(\Cent_L(K)^{\times})$, it commutes with $y - \lambda I$ and thus with~$y$.
\end{proof}

The following lemma characterizes inner Galois extensions in a simple manner:

\begin{lemma}
  \label{lem:inner-galext-incl-center}
  An extension $L/K$ of simple $\Q$-algebras is inner Galois if and only if $Z(L) \subseteq Z(K)$.
\end{lemma}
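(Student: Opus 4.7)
My plan is to prove both directions separately, using \Cref{lem:fixed-under-inn-centcent} as the bridge between the ``automorphism'' side of the definition and the ``centralizer'' side, together with the two classical structural results: the Skolem--Noether theorem and the double centralizer theorem, both applied with $L$ viewed as a central simple algebra over its own center $Z(L)$ (which it is, since $L$ is simple and finite-dimensional over $\Q$).

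For the ``if'' direction, suppose $Z(L) \subseteq Z(K)$. Then $Z(L) \subseteq K$, so any $\sigma \in \Aut(L|K)$ automatically fixes $Z(L)$ and is therefore a $Z(L)$-algebra automorphism of the central simple $Z(L)$-algebra $L$. By Skolem--Noether, $\sigma$ is given by conjugation by some $u \in L^\times$; requiring $\sigma|_K = \id_K$ forces $u \in \Cent_L(K)^\times$, so $\sigma \in \Inn(L|K)$. Thus $\Aut(L|K)=\Inn(L|K)$ and
\[
  L^{\Aut(L|K)} \;=\; L^{\Inn(L|K)} \;=\; \Cent_L(\Cent_L(K))
\]
by \Cref{lem:fixed-under-inn-centcent}. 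Since $K$ is a simple subalgebra of the central simple $Z(L)$-algebra $L$ (simplicity of $K$ as a $\Q$-algebra coincides with simplicity as a $Z(L)$-algebra), the double centralizer theorem gives $\Cent_L(\Cent_L(K))=K$, so $L|K$ is Galois. Combined with the fact that all automorphisms are inner, $L|K$ is inner Galois.

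For the ``only if'' direction, assume $L|K$ is inner Galois. Then $\Aut(L|K)=\Inn(L|K)$, so again by \Cref{lem:fixed-under-inn-centcent}, $K = L^{\Aut(L|K)} = \Cent_L(\Cent_L(K))$. Since $Z(L)$ commutes with every element of $L$, it is contained in $\Cent_L(\Cent_L(K))=K$; and since its elements also commute with every element of $K\subseteq L$, they lie in $Z(K)$. Hence $Z(L)\subseteq Z(K)$.

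I expect no serious obstacle here: the only subtlety to watch is the compatibility of base fields (automorphisms are a priori $\Q$-algebra automorphisms, but the inclusion $Z(L)\subseteq K$ promotes them to $Z(L)$-algebra automorphisms, which is exactly what is needed to invoke Skolem--Noether), and the verification that $K$ is simple as a $Z(L)$-subalgebra of $L$ so that the double centralizer theorem applies. Both are immediate, so the proof is essentially a clean assembly of \Cref{lem:fixed-under-inn-centcent} with the two classical theorems.
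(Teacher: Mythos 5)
Your proof is correct and follows essentially the same route as the paper: both directions rest on \Cref{lem:fixed-under-inn-centcent}, with Skolem--Noether supplying innerness and the double centralizer theorem supplying $\Cent_L(\Cent_L(K))=K$ for the Galois property. The only difference is that you spell out a couple of steps (that the conjugating element lands in $\Cent_L(K)^\times$, and why $K$ is a simple $Z(L)$-subalgebra) that the paper leaves implicit.
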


\begin{proof}
  ~
  \begin{description}
    \item[($\Rightarrow$)]
      Since $L/K$ is inner Galois, we have
      $
        K
        = L^{\Inn(L/K)}
        \overset{\customref{Lem.\ref*{lem:fixed-under-inn-centcent}}{lem:fixed-under-inn-centcent}}{=} \Cent_L(\Cent_L(K))
      $.
      Hence:
      \[
        Z(L)
        \subseteq
        \Cent_L(K) \cap \Cent_L(\Cent_L(K))
        = \Cent_L(K) \cap K
        = Z(K).
      \]
    \item[($\Leftarrow$)]
      By the Skolem--Noether theorem, the extension $L/Z(L)$ is inner.
      Since $K$ contains $Z(K)$ and thus $Z(L)$, this implies that $L/K$ is also inner.
      Proving that $L/K$ is Galois then amounts to proving that the $Z(L)$-algebra $K$ equals $L^{\Inn(L/K)}$, which is $\Cent_L(\Cent_L(K))$ by \Cref{lem:fixed-under-inn-centcent}.
      The equality $K = \Cent_L(\Cent_L(K))$ follows from the centralizer theorem \cite[\href{https://stacks.math.columbia.edu/tag/074T}{Theorem~074T}]{stacks-project}.
      \qedhere
  \end{description}
\end{proof}

\begin{proposition}
  \label{prop:you-can-forget-the-embedding}
  The map sending an isomorphism class of inner Galois extensions $L/K$ with center $Z$ (as defined in \Cref{def:extension}) to the isomorphism class of $L$ as a $Z$-algebra (forgetting about the embedding $K \into L$) is injective.
\end{proposition}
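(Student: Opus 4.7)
The plan is to deduce the statement from the Skolem--Noether theorem. Let $(L_1,e_1)$ and $(L_2,e_2)$ be two inner Galois extensions of $K$ with center $Z$, and assume there is a $Z$-algebra isomorphism $\phi\colon L_1\simto L_2$; I will construct a $\Q$-algebra isomorphism $\psi\colon L_1\simto L_2$ satisfying $\psi\circ e_1=e_2$. By replacing $(L_1,e_1)$ with the isomorphic extension $(L_2,\phi\circ e_1)$, the problem reduces to the case in which $L_1=L_2=L$ is a single central simple $Z$-algebra and $e_1',e_2\colon K\into L$ are two $\Q$-algebra embeddings whose restrictions to the subfield of $Z(K)$ corresponding to $Z=Z(L)$ coincide; this last compatibility is precisely the content of ``$\phi$ is a $Z$-algebra isomorphism'' under the natural identifications $Z\simto Z(L_i)$ induced by $e_i|_Z$.

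In this reduced setting, \Cref{lem:inner-galext-incl-center} ensures that $Z\subseteq e_i(Z(K))$ for $i=1,2$, so that $e_1'(K)$ and $e_2(K)$ are simple $Z$-subalgebras of the central simple $Z$-algebra $L$. The $\Q$-algebra isomorphism $f\eqdef e_2\circ(e_1')^{-1}\colon e_1'(K)\simto e_2(K)$ fixes $Z$ pointwise by the compatibility above, and therefore is $Z$-linear. The Skolem--Noether theorem then furnishes an inner automorphism $\sigma$ of $L$ extending $f$, which satisfies $\sigma\circ e_1'=e_2$; composing with $\phi$ yields the sought isomorphism of extensions $(L_1,e_1)\simto(L_2,e_2)$.

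The one delicate point is the bookkeeping of $Z$-algebra structures in the reduction step: one must carefully track how the identifications $Z\simto Z(L_i)$ are encoded in the data of the extension and verify that the hypothesis on $\phi$ translates exactly to the compatibility $\phi\circ e_1|_Z=e_2|_Z$ used in the second paragraph. Once this is unpacked, the invocation of Skolem--Noether is direct, and no arithmetic input beyond \Cref{lem:inner-galext-incl-center} is needed.
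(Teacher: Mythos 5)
Your proof is correct and follows essentially the same route as the paper's: reduce to two embeddings of $K$ into a single central simple $Z$-algebra and apply the Skolem--Noether theorem to conjugate one into the other. You are somewhat more explicit about the $Z$-algebra bookkeeping (and use the ``extend an isomorphism of simple subalgebras'' form of Skolem--Noether rather than the ``two $k$-algebra maps are conjugate'' form cited by the paper), but these are equivalent formulations of the same input, and the argument is the same.
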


\begin{proof}
  Let $L_1$, $L_2$ be central simple $Z$-algebras, isomorphic via an isomorphism $i : L_1 \overset\sim\to L_2$
  , and in which $K$ embeds respectively via embeddings $e_1$ and $e_2$.
  By the form of the Skolem--Noether theorem given in \cite[\href{https://stacks.math.columbia.edu/tag/074Q}{Theorem~074Q}]{stacks-project}, there is an (inner) automorphism $\alpha$ of~$L_2$ such that $\alpha\circ i\circ e_1 = e_2$.
  Hence, $(L_1,e_1)$ and $(L_2,e_2)$ are isomorphic extensions of $K$ in the sense of \Cref{def:extension}.
\end{proof}

A consequence of \Cref{prop:you-can-forget-the-embedding} is that the problem of counting inner Galois extensions of~$K$ with center $Z$ can be equivalently rephrased as counting central simple $Z$-algebras in which $K$ embeds.
This rephrasing is especially useful when combined with the following criterion, which lets one decide whether a simple $Z$-algebra $K$ embeds into a central simple $Z$-algebra~$L$:

\begin{lemma}
  \label{lem:csa-inclusion-criterion}
  Let $F/Z$ be a field extension of degree $d$.
  Let $L$ be a central simple $Z$-algebra of dimension $M^2$ and $K$ be a central simple $F$-algebra of dimension $m^2$.
  The following are equivalent:
  \begin{enumerate}[label=(\roman*)]
    \item
      \label{csa-inclusion-criterion-i}
      There is an embedding $K \into L$ of $Z$-algebras.
    \item
      \label{csa-inclusion-criterion-ii}
      The number $j \coloneqq \frac{M}{dm}$ is an integer, and there is a central simple $F$-algebra $R$ of dimension $j^2$ such that $[L\otimes_Z F] = [K] + [R]$ in the Brauer group of $F$.
  \end{enumerate}
\end{lemma}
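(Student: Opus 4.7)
The plan is to reinterpret condition \pef{csa-inclusion-criterion-i} in module-theoretic terms so that it can be compared with \pef{csa-inclusion-criterion-ii} via Brauer-class and dimension arithmetic. I would begin by writing $L \simeq \End_D(W)$, where $D$ is the central division $Z$-algebra with $[D]=[L] \in \Br(Z)$ of $Z$-dimension $\delta^2$ (so $\delta = \ind(L)$), and $W$ is a free right $D$-module of rank $k = M/\delta$. A $Z$-algebra homomorphism $K \to L = \End_D(W)$ is then equivalent to endowing $W$ with a left $K$-action commuting with its right $D$-action, i.e., with a left module structure over $A := K \otimes_Z D^{\op}$ extending the given right $D$-structure. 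Since $K$ is simple and $W \neq 0$, any such homomorphism is automatically injective, so \pef{csa-inclusion-criterion-i} amounts to asking that such an $A$-module structure on $W$ exist.

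Next, I would analyze $A$. It is a central simple $F$-algebra of $F$-dimension $m^2\delta^2$ (its center being $Z(K)\otimes_Z Z(D^{\op}) = F\otimes_Z Z = F$), and using that $L$ and $D$ are Brauer-equivalent over $Z$ one computes
\[
  [A] \;=\; [K] + [D^{\op} \otimes_Z F] \;=\; [K] - [L \otimes_Z F] \qquad \text{in } \Br(F).
\]
Letting $\iota = \ind(A)$ and writing $A \simeq \mathfrak{M}_{m\delta/\iota}(D'')$ for the central division $F$-algebra $D''$ with $[D''] = [A]$, the unique simple left $A$-module $V_0$ has $F$-dimension $m\delta\iota$, hence $Z$-dimension $m\delta\iota d$, hence right $D$-rank $md\iota/\delta$ (an integer since $V_0$ is a right $D$-module and $D$ is a division ring). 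Every left $A$-module is isomorphic to $V_0^{\oplus n}$ for some $n \in \Z_{\geq 0}$, with underlying right $D$-module of rank $n \cdot md\iota/\delta$; so $W \simeq D^k$ admits such a structure if and only if $k = n \cdot md\iota/\delta$ for some positive integer $n$, equivalently if and only if $j := M/(md)$ is a positive integer divisible by $\iota$.

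It remains to check that this divisibility condition is equivalent to \pef{csa-inclusion-criterion-ii}: on the one hand, if $R$ is as in \pef{csa-inclusion-criterion-ii}, then $[R] = -[A]$ has index $\iota$, which divides $j$ because $\dim_F R = j^2$; conversely, given $\iota \mid j$ with $j \in \Z_{>0}$, the algebra $R := \mathfrak{M}_{j/\iota}\big((D'')^{\op}\big)$ has $F$-dimension $j^2$ and Brauer class $-[A] = [L \otimes_Z F] - [K]$, as required. The main obstacle is setting up the Morita-type correspondence of the first step cleanly and carrying out the dimension bookkeeping across the three base rings $Z$, $F$, and $D$ so that $j$ really emerges as the governing quantity; once this is in place, the equivalence with the Brauer-theoretic formulation follows by direct computation.
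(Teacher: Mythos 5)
Your proof is correct but proceeds along a genuinely different route from the paper. The paper argues the two implications separately: for \ref{csa-inclusion-criterion-i}~$\Rightarrow$~\ref{csa-inclusion-criterion-ii} it sets $R = \Cent_L(K)$, invokes the centralizer theorem, and runs a double-centralizer/Morita argument to show $\End_R(L) = K^{\op}\otimes_Z L$ is a matrix algebra over $R$; for \ref{csa-inclusion-criterion-ii}~$\Rightarrow$~\ref{csa-inclusion-criterion-i} it constructs an explicit chain of embeddings inside $\End_Z(K\otimes_F R)\otimes_Z\mathfrak M_d(Z)$ and identifies $L$ with the opposite of a centralizer there. You instead write $L\simeq\End_D(W)$ for $D$ the underlying division algebra, observe that $Z$-algebra maps $K\to L$ correspond exactly to compatible left $A$-module structures on $W$ with $A\eqdef K\otimes_Z D^{\op}$, and then read off the existence of such a structure from the isomorphism type $V_0^{\oplus n}$ of $A$-modules by comparing right-$D$-ranks; this yields both directions at once as a single arithmetic condition ($j\in\Z_{>0}$ and $\ind(A)\mid j$), which you then translate into condition \ref{csa-inclusion-criterion-ii}. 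The only cosmetic wrinkle is your offhand claim that $Z(A)=Z(K)\otimes_Z Z(D^{\op})$; it is cleaner to note $A = K\otimes_F(F\otimes_Z D^{\op})$, a tensor product of central simple $F$-algebras, so $Z(A)=F$. Your approach makes the role of $\ind([L\otimes_Z F]-[K])$ completely transparent and avoids the paper's somewhat intricate construction in the converse direction; the paper's approach has the mild advantage of producing $R$ intrinsically as $\Cent_L(K)$, which is reused elsewhere (e.g.\ in \Cref{cor:descent-inner}).
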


The degree of $L$ over $K$ is then $n \coloneqq dj^2$.
The situation is summed up by the following diagram:
\[
  \begin{tikzcd}[column sep=1ex, row sep=3ex]
    L \ar[dashed,-]{rd}{n=dj^2} \ar[-,swap]{ddd}{M^2=(dmj)^2} \\
    & K \ar[-]{d}{m^2} \\[1ex]
    & F \ar[-]{ld}{d} \\
    Z
  \end{tikzcd}
\]


\begin{proof}[Proof of \Cref{lem:csa-inclusion-criterion}.]
  ~
  \begin{description}
    \item[
      \ref{csa-inclusion-criterion-i}
      $\Rightarrow$
      \ref{csa-inclusion-criterion-ii}
    ]
      Assume $K$ embeds in $L$ and see $K$ as a subring of $L$ via this embedding.
      Let $R = \Cent_L(K)$.
      By the centralizer theorem \cite[\href{https://stacks.math.columbia.edu/tag/074T}{Theorem~074T}]{stacks-project},~$R$ is a simple $Z$-algebra of dimension $\frac {M^2} {dm^2}$ whose centralizer $\Cent_L(R)$ is $K$.
      In particular:
      \[
        Z(R)
        =
        R \cap \Cent_L(R)
        =
        \Cent_L(K) \cap K
        =
        Z(K)
        =
        F.
      \]
      So $R$ is a central simple $F$-algebra of dimension $\frac{M^2}{d^2 m^2} = j^2$.
      In particular, $j$ is an integer.
      See~$L$ as a right $(K^{\op}\otimes_Z L)$-module via the action induced by the formula $\lambda.(a \otimes \lambda') = a\lambda\lambda'$ for $\lambda, \lambda'~\in~L$ and $a \in K$.
      An endomorphism $\phi$ of the right $(K^{\op}\otimes_Z L)$-module $L$ is determined by the element $\phi(1)$.
      This lets us identify $\End_{K^{\op} \otimes_Z L}(L)$ with a subset of $L$.
      We let the reader check that this subset is precisely $\Cent_L(K) = R$.
      By \cite[\href{https://stacks.math.columbia.edu/tag/074F}{Lemma~074F}]{stacks-project}, the $Z$-algebra $K^{\op} \otimes_Z L$ is simple because both~$K$ and~$L$ are simple and $Z(L)=Z$.
      By \cite[\href{https://stacks.math.columbia.edu/tag/074E}{Lemma~074E~(5)}]{stacks-project}, the equality $\End_{K^{\op} \otimes_Z L}(L) = R$ then implies that $\End_R(L) = K^{\op} \otimes_{Z} L = K^{\op} \otimes_F \big( F \otimes_Z L\big)$.
      Moreover, $\End_R(L)$ is a matrix algebra over~$R$ by \cite[\href{https://stacks.math.columbia.edu/tag/074E}{Lemma~074E~(6)}]{stacks-project}.
      Therefore, the classes of $K^{\op} \otimes_F \big( F \otimes_Z L\big)$ and of $R$ coincide in the Brauer group of $F$, which implies $[F\otimes_Z L] -[K] = [R]$ and finally \ref{csa-inclusion-criterion-ii}.

    \item[
      \ref{csa-inclusion-criterion-ii}
      $\Rightarrow$
      \ref{csa-inclusion-criterion-i}
    ]
      Let $K' \coloneqq K\otimes_F R$.
      By assumption, we have $[L\otimes_Z F]=[K']$ in the Brauer group of $F$.
      Since $\dim_F(L\otimes_Z F) = \dim_Z(L) = M^2 = (dmj)^2$ and $\dim_F(K') = \dim_F(K)\cdot\dim_F(R) = (mj)^2$, this implies that $L\otimes_Z F \simeq \mathfrak{M}_d(K')$.

      Consider the central simple $Z$-algebra
      $
        \End_Z(K')
        \simeq
        \mathfrak{M}_{\dim_Z(K')}(Z)
      $.
      There are embeddings $K'\into \End_Z(K')$ and $(K')^{\op}\into\End_Z(K')$ coming from the respective actions of $K'$ on itself via left and right multiplication.
      The images of these two embeddings commute as $K'$ is associative.

      Let $A = \End_Z(K') \otimes_Z \mathfrak{M}_d(Z)$.
      The embedding $K'\into \End_Z(K')$ induces the following embedding of $L$ in $A$:
      \[
        L
        \into
        L \otimes_Z F
        \simeq
        \mathfrak{M}_d(K')
        \simeq
        K' \otimes_Z \mathfrak{M}_d(Z)
        \into
        \End_Z(K') \otimes_Z \mathfrak{M}_d(Z)
        =
        A.
      \]
      We see $L$ as a subalgebra of $A$ via this embedding.
      Let $C \coloneqq \Cent_A(L)$.
      As $L$ and~$A$ are central simple $Z$-algebras, we have $L\otimes_Z C \simeq A$ by \cite[\href{https://stacks.math.columbia.edu/tag/074U}{Lemma~074U}]{stacks-project}.
      Since $A\simeq\mathfrak{M}_{\dim_Z(K')\cdot d}(Z)$, it follows that $[L]=[C^{\op}]$ in the Brauer group of $Z$.
      We have $\dim_Z(L) = M^2 = (dmj)^2$ and:
      \[
        \dim_Z(C^{\op})
        =
        \dim_Z(C)
        =
        \frac
        {\dim_Z(A)}
        {\dim_Z(L)}
        =
        \frac{
          \dim_Z(K')^2\cdot d^2
        }
        {
          (dmj)^2
        }
        =
        \frac{
          (dm^2j^2)^2\cdot d^2
        }
        {
          (dmj)^2
        }
        =
        (dmj)^2.
      \]
      Therefore, there is an isomorphism $L\simeq C^{\op}$.

      The inclusions $L \into K' \otimes_Z \mathfrak{M}_d(Z) \into A$ imply that $C = \Cent_A(L)$ contains $\Cent_A(K'\otimes_Z\mathfrak{M}_d(Z))$.
      Elements in the image of the embedding $(K')^{\op} \into \End_Z(K')\into A$ commute with elements of $K'\otimes_Z\mathfrak{M}_d(Z)$ because they come from right multiplication by elements of $K'$.
      Therefore, these elements belong to $C$.
      This defines an embedding $(K')^{\op} \into C$, from which we finally obtain an embedding $K \into K\otimes_F R = K' = \big((K')^{\op}\big)^{\op} \into C^{\op} \simeq L$ as claimed.
      \qedhere
  \end{description}
\end{proof}

\begin{remark}
  Several cases of \Cref{lem:csa-inclusion-criterion} are classical:
  \begin{itemize}
    \item
      A commutative field extension $F$ of $Z$ of degree $M$ is contained in a central simple $Z$-algebra~$L$ of dimension $M^2$ (as a maximal subfield) if and only if it is a splitting field, i.e., $L\otimes_ZF\simeq\mathfrak{M}_M(F)$.
      This is the case $m=j=1$.
      Our proof of \Cref{lem:csa-inclusion-criterion} draws inspiration from the proof of this special case given in \cite[\href{https://stacks.math.columbia.edu/tag/074Z}{Theorem~074Z}]{stacks-project}.
    \item
      Two central simple $Z$-algebras $L$ and $K$ of the same dimension $M^2$ are isomorphic if and only if $[L]=[K]$ in the Brauer group of $Z$.
      This is the case $d=j=1$.
    \item
      When $F = Z$ (i.e., $d = 1$), \Cref{lem:csa-inclusion-criterion} specializes to a criterion for the inclusion of a central simple $Z$-algebra into another.
      This criterion appears in \cite[Section~5]{deschamps} (cf. the definition and description of what Deschamps calls the Brauer group $\Br(K)$ of a central simple $Z$-algebra~$K$):

      \begin{corollary}[Deschamps]
        \label{cor:descent-inner}
        Let $Z$ be a field, let $L$ be a central simple $Z$-algebra of dimension~$M^2$ containing a central simple $Z$-algebra $K$ of dimension $m^2$.
        Then, there is a central simple $Z$-algebra $R$ of dimension $\left( \frac M m \right)^2$ such that $L \simeq R \otimes_Z K$, namely $R = \Cent_L(K)$.
      \end{corollary}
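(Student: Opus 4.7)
The plan is to derive the corollary as essentially the $d=1$ (i.e.\ $F=Z$) specialization of \Cref{lem:csa-inclusion-criterion}, upgraded from a Brauer-class equality to an isomorphism of algebras. Setting $F=Z$ gives $j = M/m$, and the hypothesis that $K\into L$ is an embedding of $Z$-algebras is exactly condition \ref{csa-inclusion-criterion-i}. The implication \ref{csa-inclusion-criterion-i}$\Rightarrow$\ref{csa-inclusion-criterion-ii} of that lemma, whose proof explicitly constructs the witness as $R \eqdef \Cent_L(K)$, then supplies everything we need about $R$: it is a central simple $Z$-algebra (via the centralizer theorem cited in the lemma's proof) of dimension $j^2 = (M/m)^2$, and it satisfies $[L] = [K] + [R]$ in $\Br(Z)$.

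It remains to promote this Brauer-class identity to an isomorphism $L \simeq R\otimes_Z K$. I would argue as follows: the tensor product $R\otimes_Z K$ of two central simple $Z$-algebras is again central simple over $Z$, and its dimension is
\[
  \dim_Z(R\otimes_Z K) = (M/m)^2\cdot m^2 = M^2 = \dim_Z(L),
\]
while its Brauer class is $[R] + [K] = [L]$. Two central simple $Z$-algebras of equal dimension that are Brauer-equivalent are isomorphic (this is the case $d=j=1$ highlighted in the remark just before the corollary, or simply the definition of the Brauer group). Hence $L \simeq R \otimes_Z K$ as $Z$-algebras, with $R = \Cent_L(K)$.

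There is essentially no obstacle: the content is already packaged in \Cref{lem:csa-inclusion-criterion}, and the only thing to add is the one-line observation that equal dimension plus equal Brauer class forces isomorphism for central simple algebras. If one preferred a more direct route, one could instead appeal once to the Stacks project reference \href{https://stacks.math.columbia.edu/tag/074U}{Lemma~074U} used inside the proof of \Cref{lem:csa-inclusion-criterion}, which already yields $K\otimes_Z \Cent_L(K)\simeq L$ for any central simple $Z$-subalgebra $K\subseteq L$; but keeping the argument internal to the lemma we just proved makes the corollary read as a genuine corollary rather than a re-citation.
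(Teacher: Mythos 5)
Your proposal is correct and matches the paper's intent: the paper presents \Cref{cor:descent-inner} without a separate proof, offering it as the $F=Z$ (i.e.\ $d=1$) specialization of \Cref{lem:csa-inclusion-criterion}, and your argument is exactly the natural unpacking of that — read off $R=\Cent_L(K)$ and $[L]=[K]+[R]$ from the (i)$\Rightarrow$(ii) direction, then use the standard fact that Brauer-equivalent central simple algebras of equal dimension are isomorphic. Your parenthetical alternative (invoking \href{https://stacks.math.columbia.edu/tag/074U}{Lemma~074U} directly, which hands you the isomorphism $K\otimes_Z\Cent_L(K)\simeq L$ in one step) is also valid and marginally more economical, but both routes land in the same place and both work over an arbitrary field $Z$, as the corollary requires.
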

  \end{itemize}
\end{remark}

\subsection{Notation and main theorem}
\label{ssn:notation-and-mainthm}

In this subsection, after introducing the necessary terminology and notation, we state our main theorem, \Cref{thm:main-inner}.
The notations we fix here are in effect throughout all of \Cref{sn:inner}.

\subsubsection{The centers.}
We fix an extension $F/Z$ of number fields and we let $d = [F:Z]$.
We denote the set of places of $Z$ by $\Pm$.
For every place~$w$ of $F$, lying above a place $v$ of $Z$, the \emph{local degree of~$F/Z$ at $w$} is the integer $d_w \coloneqq [F_w : Z_v]$.

We let $G$ be the Galois group of the Galois closure $\hat F$ of $F/Z$.
The transitive action of $G$ on the~$d$ embeddings of $F$ into $\hat F$ lets us see $G$ as a transitive subgroup of $\Sym_d$.
For an unramified prime~$p$ of~$Z$, we let $\Frob(p)$ be the conjugacy class of $G$ consisting of the Frobenius automorphisms for primes of~$\hat F$ above~$p$.

For an element $g \in G$, we denote by $\cycgcd(g)$ the greatest common divisor of the sizes of all the orbits of the action of $g$ on $\{1, \dots, d\}$.
Note that $\cycgcd(g)$ divides $d$, the sum of the sizes of all orbits.
Since $\cycgcd(g)$ only depends on the conjugacy class of $g$,
we use the same notation when $g$ is a conjugacy class of $G$.
Finally, we let:
\[
  U
  \,\coloneqq\,
  \underset{g \in G}{\lcm}
    \,\,
    \cycgcd(g)
  .
\]

\begin{lemma}
  \label{lem:FKS}
  We have $U \geq 2$, unless $F = Z$.
\end{lemma}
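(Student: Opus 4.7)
The plan is to deduce the lemma directly from the Fein--Kantor--Schacher theorem (the main result of \cite{fks}), which states that every finite transitive permutation group of degree $\geq 2$ contains a fixed-point-free element of prime power order. The hypothesis $F \neq Z$ means $d = [F:Z] \geq 2$, and by construction $G \subseteq \Sym_d$ is transitive, so this theorem applies.

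Concretely, I would proceed as follows. Pick a fixed-point-free element $g \in G$ whose order is a prime power $p^k$ with $k \geq 1$. The cycle lengths of $g$ acting on $\{1,\dots,d\}$ all divide $\ord(g) = p^k$, hence are powers of $p$. Because $g$ has no fixed points, none of these cycle lengths equals $1$, so each cycle length is divisible by $p$. Therefore $\cycgcd(g)$ is divisible by $p$, in particular $\cycgcd(g) \geq p \geq 2$. Since $U$ is the lcm of $\cycgcd(h)$ over all $h \in G$, we conclude $U \geq \cycgcd(g) \geq 2$.

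The only genuine difficulty here is the Fein--Kantor--Schacher theorem itself, whose known proofs rely on the classification of finite simple groups; the passage from that theorem to \Cref{lem:FKS} is immediate, as explained above. Since the paper explicitly attributes the key group-theoretic input to \cite{fks}, it suffices to cite that result rather than to reprove it.
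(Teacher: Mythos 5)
Your proof is correct and takes essentially the same route as the paper: both invoke the Fein--Kantor--Schacher theorem to produce a fixed-point-free element of prime-power order in the transitive group $G \subseteq \Sym_d$, observe that all its cycle lengths are divisible by that prime, and conclude $p \mid \cycgcd(g) \mid U$. You have merely spelled out the intermediate steps in a bit more detail.
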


\begin{proof}
  Assume that $F \neq Z$, i.e., $d \geq 2$.
  By a theorem of Fein, Kantor and Schacher%
  \footnote{%
    Thanks to Michael Giudici for pointing this theorem to us.
    Note that the proof of Fein, Kantor and Schacher relies on the classification of finite simple groups.
  }
  \cite[Theorem 1]{fks}, the transitive subgroup $G$ of $\Sym_d$ contains a fixed-point-free element $g$ whose order is a prime power $p^k$.
  Its orbits all have sizes divisible by $p$, so $p \mid \cycgcd(g) \mid U$.
\end{proof}

We describe $U$ explicitly in two special cases:

\begin{lemma}\label{lem:u0-galois}
  If $F/Z$ is Galois, then $\cycgcd(g) = \ord(g)$ for all $g \in G$, and $U$ is the exponent of~$G$.
\end{lemma}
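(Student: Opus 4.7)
The plan is straightforward: unpack the definition of the $G$-action when $F|Z$ is Galois and observe that it is the regular action of $G$ on itself, for which orbit sizes of any element are easily described.

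First I would note that when $F|Z$ is Galois, the Galois closure $\hat F$ equals $F$, so $G = \Gal(F|Z)$ and the set of $d$ embeddings of $F$ into $\hat F = F$ is exactly $G$ itself. Under the identification of this set with $G$, the action of $G$ on the embeddings (by postcomposition) becomes the left regular action of $G$ on itself, $h \mapsto gh$.

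Next, for any fixed $g \in G$, the orbits of this action of $g$ on $G$ are exactly the right cosets $\langle g \rangle h$, and each such coset has size $|\langle g \rangle| = \ord(g)$. Since all orbits have the same size, their greatest common divisor is also $\ord(g)$; that is, $\cycgcd(g) = \ord(g)$ for every $g \in G$.

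Finally, taking the least common multiple over $g \in G$ gives
\[
  U \;=\; \underset{g\in G}{\lcm}\; \cycgcd(g) \;=\; \underset{g\in G}{\lcm}\; \ord(g),
\]
which is by definition the exponent of $G$. There is no real obstacle here; the only thing to be careful about is identifying the transitive $G$-action on $\{1,\dots,d\}$ with the regular $G$-action on $G$ when $F|Z$ is Galois, which is precisely what makes every cycle of a given element $g$ have the same length $\ord(g)$.
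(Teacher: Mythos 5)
Your proof is correct and matches the paper's approach: both identify the transitive $G$-action on the $d$ embeddings with the regular action of $G$ on itself (since $\hat F = F$ and $d = |G|$), from which every orbit of $g$ has size $\ord(g)$, giving $\cycgcd(g) = \ord(g)$ and $U = \mathrm{lcm}_{g\in G}\,\ord(g) = \exp(G)$. The only difference is that you spell out the identification of orbits with right cosets of $\langle g\rangle$, which the paper states more briefly.
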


\begin{proof}
  We have $d = \card{G}$ and $G \into \Sym_d$ is the regular embedding.
  The orbits of $g\in G$ all have size~$\ord(g)$ and thus $\cycgcd(g) = \ord(g)$.
  Finally,
  $
    U
    =
    \lcm_{g\in G}
      \,
      \ord(g)
  $
  is the exponent of~$G$.
\end{proof}

\begin{lemma}\label{lem:u0-primepower}
  If $d=p^k$ is a prime power with $k\geq1$, then $U=p^{k'}$ for some $1\leq k'\leq k$.
\end{lemma}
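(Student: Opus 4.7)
The plan is to combine the divisibility observation made just before the definition of $U$ with the lower bound supplied by the previous lemma (Lemma~\ref{lem:FKS}). Concretely, the excerpt already notes that $\cycgcd(g) \mid d$ for every $g \in G$, since $d$ is the sum of the orbit sizes of $g$ acting on $\{1,\dots,d\}$. With the assumption $d = p^k$, this immediately forces $\cycgcd(g)$ to be a power of $p$ bounded above by $p^k$.

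From there, the argument is essentially automatic: the least common multiple of a finite family of powers of $p$, each at most $p^k$, is itself a power of $p$ bounded above by $p^k$. So I would conclude that $U = p^{k'}$ for some integer $0 \leq k' \leq k$. The only remaining point is to rule out $k' = 0$, i.e., to exhibit some $g \in G$ with $\cycgcd(g) > 1$. This is exactly what Lemma~\ref{lem:FKS} provides: since $k \geq 1$ we have $d = p^k \geq 2$, hence $F \neq Z$, and Lemma~\ref{lem:FKS} guarantees $U \geq 2$, so $k' \geq 1$.

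There is no real obstacle here; the content of the statement is essentially a bookkeeping consequence of the divisibility $\cycgcd(g) \mid d$ together with the previously established lower bound $U \geq 2$. The only nontrivial input is the theorem of Fein--Kantor--Schacher invoked in the proof of Lemma~\ref{lem:FKS}, but since that lemma is already available I would just cite it. The proof should fit comfortably in two or three sentences.
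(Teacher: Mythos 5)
Your proof is correct and matches the paper's argument: the paper likewise observes that $U$ divides $d$ by definition (since each $\cycgcd(g)\mid d$, so their lcm does too) and then applies \Cref{lem:FKS} to get $U\geq 2$, which together force $U=p^{k'}$ with $1\leq k'\leq k$. Your exposition is slightly more explicit but identical in substance.
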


\begin{proof}
  By \Cref{lem:FKS}, we have $U\geq2$.
  On the other hand, $U$ is by definition a divisor of $d$.
\end{proof}

\subsubsection{The central simple algebra.}
We fix a central simple $F$-algebra $K$ of dimension $m^2$.
For every place $w$ of $F$, we denote by~$\kappa_w$ the element of $\Z/m\Z$ such that the local invariant $\inv{K_w} \in \Q/\Z$ of~$K$ at $w$ is $\frac{\kappa_w}{m}$.

\begin{definition}
  \label{def:pex}
  We say that a place $v$ of $Z$ is \emph{exceptional} if it is archimedean, or ramified in~$F$, or if $\kappa_w \neq 0$ for some place $w|v$ of $F$.
  We denote by $\Pex$ the finite set of exceptional places of~$Z$.
\end{definition}

\subsubsection{The degree.}
We fix an integer $j \geq 1$.
We let $n = dj^2$ and $M = dmj$.
In the rest of \Cref{sn:inner}, our goal is to count inner Galois extensions $L/K$ of degree $n$ with center $Z(L) = Z$.
If $n=1$, then $d=j=1$ and there is exactly one such extension, namely the trivial extension $L=K$.
From now on, we exclude this case and assume $n \geq 2$.
Since $n = dj^2 \geq 2$, we have $j \geq 2$ or $d \geq 2$.
By \Cref{lem:FKS}, it follows that $U j \geq 2$.
Hence, the following definition makes sense:

\begin{definition}
  \label{def:u}
  We denote by $u$ the smallest prime number dividing $U j$.
\end{definition}

\begin{definition}
  \label{def:beta}
  We define the rational number $\beta \in (0,1]$ as follows:
  \[
    \beta
    \coloneqq
    \frac 1 {\card{G}}
    \cdot
    \Big|
      \Big\{
        g \in G
        \,\Big\vert\,
        u \text{ divides } j \cdot \cycgcd(g)
      \Big\}
    \Big|
    =
    \left\lbrace
      \begin{array}{cl}
        1
        &
        \text{if $u|j$},
        \\
        \frac 1 {\card{G}}
        \cdot
        \Big|
          \Big\{
            g \in G
            \,\Big\vert\,
            u \text{ divides } \cycgcd(g)
          \Big\}
        \Big|
        &
        \text{otherwise}.
      \end{array}
    \right.
  \]
\end{definition}

Note that $u$ divides $j \cdot \cycgcd(g) = \frac{M} {dm/\cycgcd(g)}$ if and only if $\frac {dm} {\cycgcd(g)}$ divides $\Mu$.
Hence:
\begin{equation}
  \label{eqn:beta-otherdef}
  \beta
  =
  \frac 1 {\card{G}}
  \cdot
  \card{
    \left\{
      g\in G
      \verti
      \frac {d m} {\cycgcd(g)} \textnormal{ divides }\Mu
    \right\}
  }.
\end{equation}

The following remarks help understand the constants $u$ and $\beta$:

\begin{remark}
  If $F/Z$ is a Galois extension or $d$ is a prime power, then $u$ is the smallest prime factor of~$dj$ by \Cref{lem:u0-galois,lem:u0-primepower}.
  Moreover, if $F/Z$ is Galois and $u$ does not divide $j$, then $\beta$ is the proportion of elements of $G$ whose order is divisible by $u$.
\end{remark}

\begin{remark}
  In the non-Galois case, the number $u$ is not necessarily the smallest prime factor of~$dj$.
  For instance, take $j=1$, $Z=\Q$, and any number field $F$ of degree $6$ whose Galois closure has Galois group
  $
    \Big\langle
      (1\;4)(2\;5),(1\;3\;5)(2\;4\;6)
    \Big\rangle
    \subseteq
    \Sym_6
  $, which is the transitive permutation group \texttt{6T4} in GAP notation and is isomorphic to $A_4$.
  This group contains no permutations whose cycles all have even sizes, i.e., $u \neq 2$.
  Instead, we have $u=3$ as there are elements consisting of two $3$-cycles.
\end{remark}

\subsubsection{Statement of the main theorem.}

Using the notation introduced above, we state the main result of this section:

\begin{theorem}
  \label{thm:main-inner}
  ~
  \begin{enumerate}[label=(\roman*)]
    \item
      \label{thm:main-inner-i}
      There is a real number $C \geq 0$ such that the number $N(X)$ of inner Galois extensions $L/K$ of degree $n=dj^2$ with center $Z$ and with $d(L/Z) \leq X$ satisfies
      \[
        N(X)
        \underset{X\to\infty}\sim
        C
        X^{1/a}
        (\log X)^{b-1}
      \]
      where $a = M^2\left(1-\frac1u\right)$ and $b=(u-1)\beta$.
      When $C = 0$, this is taken to mean that there is no such extension for any $X$.
    \item
      \label{thm:main-inner-ii}
      The same holds if we restrict to inner Galois extensions $L/K$ which are division algebras (with a possibly smaller constant $C$).
  \end{enumerate}
\end{theorem}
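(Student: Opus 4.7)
Following \Cref{ssn:reduction-combi}, I would combine \Cref{prop:you-can-forget-the-embedding} with \Cref{lem:csa-inclusion-criterion} to identify the set to count with the set of central simple $Z$-algebras $L$ of dimension $M^2$ admitting an embedding of $K$. Writing $[L]\in\Br(Z)$ as a tuple of local invariants $(\lambda_v)\in\bigoplus_v\Z/M\Z$, the embedding criterion \iref{lem:csa-inclusion-criterion}{csa-inclusion-criterion-ii} together with the description of $\Br(F)$ in terms of local invariants translates into the place-by-place condition $d_w\lambda_v\equiv dj\kappa_w\pmod{dm}$ for every $w|v$, coupled with the global ABHN relation $\sum_v\lambda_v\equiv 0\pmod M$. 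By the discriminant formula \pef{eqn:discr-csa}, $d(L|Z)=\prod_p\Nm{p}^{M(M-\gcd(M,\lambda_p))}$, so the theorem reduces to a purely combinatorial counting problem over admissible tuples.

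\textbf{Dirichlet series and main-term analysis.} In \Cref{ssn:key-dirichlet,ssn:analytic-props} I would form the Dirichlet series $D(s)=\sum_L d(L|Z)^{-s}$, detect the ABHN constraint by Fourier inversion $\mathbbm{1}[\sum_v\lambda_v\equiv 0]=\frac{1}{M}\sum_{a\in\Z/M\Z}\e(a\sum_v\lambda_v/M)$, and obtain a decomposition $D(s)=\frac{1}{M}\sum_a\prod_v D_v(s,a)$ into a finite sum of Euler products whose local factors are explicit finite character sums. At a non-exceptional prime $p\notin\Pex$ the admissible $\lambda_p$ form the subgroup of order $j\cycgcd(\Frob(p))$ generated by $\frac{dm}{\cycgcd(\Frob(p))}$ in $\Z/M\Z$; for $a=0$ the leading non-trivial contribution comes from the $u-1$ nonzero elements of $\frac{M}{u}\Z/M\Z$, which are admissible exactly when $u\mid j\cycgcd(\Frob(p))$, each contributing $\Nm{p}^{-as}$ with $a=M^2(1-1/u)$. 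Comparing this $a=0$ sub-series with the Artin $L$-function attached to the permutation representation of $G$ on $\{1,\dots,d\}$, and invoking Chebotarev together with the identity \pef{eqn:beta-otherdef}, shows that the sub-series factors as $\zeta_Z(as)^{(u-1)\beta}$ times a function holomorphic and non-vanishing on $\Re(s)\geq 1/a$. The main analytic obstacle is to establish that the characters $a\neq 0$ produce only strictly weaker singularities: for each such $a$ one must exhibit a positive-density set of primes at which the twist $\e(a\lambda_p/M)$ genuinely reduces the main Euler coefficient from $u-1$ to a strictly smaller value, using that $a$ breaks the symmetry among the $u-1$ main elements at a Chebotarev-positive proportion of primes. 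A Tauberian theorem handling non-integer pole orders (e.g. Selberg-Delange or Landau-Tenenbaum) then converts the singularity into $N(X)\sim CX^{1/a}(\log X)^{(u-1)\beta-1}$, proving \iref{thm:main-inner}{thm:main-inner-i} up to the positivity of $C$.

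\textbf{Positivity and skew-field restriction.} The positivity of $C$ under the existence hypothesis (\Cref{ssn:positivity-leading-coeff}) follows by verifying that every local factor at an exceptional prime $v\in\Pex$, evaluated at the relevant point and at $a=0$, is a sum of non-negative real contributions containing at least one strictly positive term corresponding to a locally admissible choice, so the product is strictly positive whenever at least one global extension exists. For \iref{thm:main-inner}{thm:main-inner-ii} (\Cref{ssn:restriction-skew}), the skew-field condition $\ind(L)=M$ is, via the equality of index and exponent over global fields, equivalent to requiring that for each prime $\ell\mid M$ some $\lambda_v$ is not divisible by $\ell$. I would handle this by inclusion-exclusion on subsets of $\{\ell\mid M\}$: each ``bad'' piece corresponds to all invariants lying in some proper subgroup $q\Z/M\Z$ with $q>1$, and the same analytic analysis applied to this restricted problem yields a strictly smaller critical exponent (restricting $\lambda_v$ to a subgroup strictly enlarges $\gcd(M,\lambda_v)$ at generic primes), so the leading asymptotic of part~\iref{thm:main-inner}{thm:main-inner-i} survives with a new positive constant.
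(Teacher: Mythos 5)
Your overall architecture matches the paper's: reduce via \Cref{prop:you-can-forget-the-embedding} and \Cref{lem:csa-inclusion-criterion} to a count of central simple $Z$-algebras by Brauer invariants, encode the ABHN constraint by Fourier inversion, factor into Euler products, compare with Artin $L$-functions, and apply a Delange-type Tauberian theorem. That framework is correct. However, there is a genuine gap in the analytic core of your argument.

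You claim that ``the characters $a\neq 0$ produce only strictly weaker singularities,'' and that for each nonzero $a$ one can exhibit a positive-density set of primes where the twist $\e(a\lambda_p/M)$ reduces the leading Euler coefficient from $u-1$ to something smaller. This is false. When $u\mid a$ (and $a\neq 0$), the phase $\e(a\lambda_p/M)$ is identically $1$ on the $u-1$ nonzero elements of $\tfrac{M}{u}\Z/M\Z$, so the leading Euler coefficient is still $g_a(u)=u-1$ at \emph{every} prime where the subgroup condition holds. In the paper's notation (cf.\ \pef{eqn:gchiu} and \Cref{lem:avg-psi-computation}), $\avg(\psi_{1,\chi})=(u-1)\beta$ for \emph{all} $\chi$ divisible by $u$, not just $\chi=0$: these $M/u$ characters all contribute singularities of the same order. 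Your approach thus misidentifies the singular part of the Dirichlet series, leading to an incorrect formula for the leading constant, and — more seriously — it causes your positivity argument to collapse. You propose to verify positivity by inspecting only the $a=0$ local factors, but the actual leading constant is a sum over all $\chi$ with $u\mid\chi$ of terms carrying phases $\e(\chi\sigma_\xi/M)$, which could a priori cancel. The paper's \Cref{ssn:positivity-leading-coeff} handles this delicately: it extends $S$ to include enough primes (with $\xi$ extended along a witness $\lambda_0$ of existence) so that $\sigma_\xi=0$, the phases become $1$, and the residual products are uniformly close to $1$, forcing the sum to be approximately $\tfrac{M}{u}$ and hence strictly positive.

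A similar issue recurs in your skew-field step. You assert that restricting all $\lambda_v$ to a proper subgroup $q\Z/M\Z$ ``yields a strictly smaller critical exponent.'' Not so: for $q$ dividing $M/u$ with the subgroup still admissible at a positive-density set of primes, the largest attainable $\gcd(M,\lambda_p)$ at generic primes can still be $M/u$, so the critical exponent $a=M^2(1-1/u)$ is unchanged; only the pole \emph{order} may drop. Inclusion-exclusion over divisors with same-order singular contributions does not immediately give a positive leading constant, which is why the paper (\Cref{ssn:restriction-skew}) again extends $S$ and $\xi$ along a witness $\lambda_0\in\Lambda'$ to kill all $\tau>1$ contributions outright before appealing to the positivity already established. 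Finally, a minor point: the class function $\psi_{\tau,\chi}$ is not (a scalar multiple of) the permutation character of $G$ on $\{1,\dots,d\}$, so one should decompose it into \emph{all} irreducible characters of $G$ and use the corresponding Artin $L$-functions, not just the one attached to the permutation representation.
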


\begin{remark}
The relative discriminants $d(L/K)$ and $d(L/Z)$ differ by a constant factor that only depends on $K$ and $Z$ (cf. \Cref{ssn:discriminant}):
\[
  d(L/Z) = d(K/Z)^{n} \cdot d(L/K).
\]
Hence, \Cref{thm:main-inner} continues to hold, with a different constant $C$, if we replace the condition $d(L/Z)\leq X$ by $d(L/K)\leq X$.
\end{remark}

Proving \Cref{thm:main-inner} is the focus of \Cref{ssn:reduction-combi,ssn:key-dirichlet,ssn:analytic-props,ssn:positivity-leading-coeff,ssn:restriction-divalg}.
In \Cref{subsn:criteria-existence}, we give additional criteria to check the existence of an extension, in order to determine whether the leading coefficient~$C$ in \Cref{thm:main-inner} is positive.

\begin{remark}
  We obtain a finer version of \Cref{thm:main-inner} where we constrain the behavior of $L$ at finitely many places.
  Let $S$ be a finite set of places of $Z$ and let $\xi : S \to \Z/M\Z$ be a map.
  Then, \Cref{thm:main-inner} holds (with possibly smaller constants $C$) if one restricts to extensions whose local invariants at the places $v \in S$ are given by $\frac{\xi(v)}M$.
\end{remark}

\begin{remark}
  Our methods yield expressions for the leading coefficient $C$ in \Cref{thm:main-inner}, see \Cref{eqn:expr-constant} and \Cref{eqn:formula-tildef-at1}.
  These expressions involve an infinite product over all primes of~$Z$ and the values at $s=1$ (resp. the residue, for the trivial character) of the Artin $L$-functions of the irreducible characters of $G = \Gal(\hat F/Z)$ (cf. the proof of \Cref{lem:approx}).
  In \Cref{rk:constant-computation-when-psi-constant}, we remark that in certain cases, including the case $F=Z$, only the residue of the Dedekind zeta function of $Z$ at $1$ (which is given by the class number formula) is needed.
\end{remark}

\subsubsection{Counting by the product of ramified primes.}

In \cite{wood-probabilities-of-local-behaviors}, Wood popularized the question of counting number fields not by discriminant, but by the product of ramified primes, which in her language is a \emph{fair counting function} for abelian extensions.
For any simple $\Q$-algebra $L$ with center $Z$, we define:
\[
  \ram(L)
  =
  \prod_{\substack{
    p\textnormal{ prime of }Z\\
    \textnormal{ramified in }L}
  }
    \Nm{p}.
\]
In \Cref{subsn:prod-ram}, we explain how to adapt the proof of \Cref{thm:main-inner} to count inner Galois extensions by the product of their ramified primes.
This leads to the following result:

\begin{theorem}
  \label{thm:main-inner-prod-ram}
  ~
  \begin{enumerate}[label=(\roman*)]
    \item
      \label{thm:main-inner-prod-ram-i}
      There is a real number $C \geq 0$ such that the number $N(X)$ of inner Galois extensions $L/K$ of degree $n=dj^2$ with center $Z(L)=Z$ and with $\ram(L) \leq X$ satisfies
      \[
        N(X)
        \underset{X\to\infty}\sim
        C
        X
        (\log X)^{b^*-1}
      \]
      where
      $
        b^*
        =
        j
        \left(
          \frac{1}{\card{G}}
          \sum_{g\in G}
            \cycgcd(g)
        \right)
        -
        1
      $.
      When $C = 0$, this is taken to mean that there is no such extension for any $X$.
    \item
      \label{thm:main-inner-prod-ram-ii}
      The same holds if we restrict to inner Galois extensions $L/K$ that are division algebras (with a possibly smaller constant $C$).
  \end{enumerate}
\end{theorem}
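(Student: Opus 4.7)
The strategy mirrors the proof of \Cref{thm:main-inner}: only the weight changes, from the discriminant-sensitive $\Nm{p}^{M(M-\gcd(M,\lambda_p))}$ to the much coarser $\Nm{p}^{\mathbf{1}[\lambda_p\neq 0]}$. First, by \Cref{prop:you-can-forget-the-embedding}, counting the extensions in question reduces to counting central simple $Z$-algebras $L$ of dimension $M^2$ in which $K$ embeds; and by \Cref{lem:csa-inclusion-criterion} combined with the ABHN sequence \pef{ses-brauer}, such $L$ correspond to families of local invariants $(\lambda_p)_{p\in\Pm}\in\bigoplus_p\Z/M\Z$ satisfying archimedean constraints, the sum-zero relation modulo $M$, and an embedding condition at each finite prime. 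For $p\notin\Pex$ with $g\in\Frob(p)$, the embedding condition at any place $w\mid p$ of $F$ reduces (since $\kappa_w=0$) to ``$d_w\lambda_p/M$ has order dividing $j$'' (i.e., $dm\mid d_w\lambda_p$); aggregating over $w\mid p$, this becomes $\cycgcd(g)\cdot\lambda_p\equiv 0\pmod{dm}$, so the valid $\lambda_p$ form a cyclic subgroup $H_g\subseteq\Z/M\Z$ of order $j\cdot\cycgcd(g)$.

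Next, I form the Dirichlet series $D(s)=\sum_L\ram(L)^{-s}$ and apply finite Fourier analysis on $\Z/M\Z$ to detect the sum-zero constraint, obtaining $D(s)=\tfrac{1}{M}\sum_\chi D_\chi(s)$, where each $D_\chi(s)$ is (up to a finite factor contributed by $\Pex$ and the archimedean places) an Euler product whose factor at $p\notin\Pex$ equals $1+(j\cycgcd(\Frob(p))-1)\Nm{p}^{-s}$ when $\chi|_{H_{\Frob(p)}}$ is trivial, and $1-\Nm{p}^{-s}$ otherwise. By the Chebotarev density theorem and comparison with Artin $L$-functions of characters of $G=\Gal(\hat F|Z)$, each $D_\chi(s)$ admits a meromorphic continuation to a neighbourhood of $s=1$ with pole order
\[
  \frac{1}{\card{G}}\sum_{g\in G}\mathbf{1}\!\left[\chi|_{H_g}=1\right]\cdot j\cycgcd(g) \;-\; 1.
\]
For $\chi=\chi_0$ this equals $b^*=j\bar c-1$ with $\bar c=\frac{1}{\card{G}}\sum_{g\in G}\cycgcd(g)$, and for every $\chi$ it is bounded above by $b^*$.

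A standard Tauberian theorem (Selberg--Delange type) then yields $N(X)\sim CX(\log X)^{b^*-1}$ for some $C\geq 0$, proving \pef{thm:main-inner-prod-ram-i}. Positivity of $C$ when at least one extension $L|K$ exists is proved as in \Cref{thm:main-inner}: the $\chi_0$-contribution factors as a product of strictly positive terms (Artin $L$-function residues, archimedean and exceptional local factors, and a $\zeta_Z^{b^*}$-type singularity), all of which are non-zero as soon as the family of valid $(\lambda_p)$ is non-empty. For \pef{thm:main-inner-prod-ram-ii}, the further ``skew field'' condition is equivalent to ``$[L]$ has exponent exactly $M$ in $\Br(Z)$'' and can be expressed via Möbius inversion over strict divisors $M'$ of $M$; each term is a Dirichlet series of the same shape for a smaller target exponent, and the resulting alternating sum retains an asymptotic of the form $CX(\log X)^{b^*-1}$ with a possibly smaller but still positive constant. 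The main technical obstacle lies in the pole-order analysis of the second step: matching the mixed Euler products to Artin $L$-functions of characters of $G$ and verifying that no non-trivial $\chi$ raises the pole order above $b^*$ requires the same careful bookkeeping as in the proof of \Cref{thm:main-inner}, though the simpler weight $\ram$ makes several local computations cleaner here.
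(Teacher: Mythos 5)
Your plan reproduces the paper's argument: you set up the Dirichlet series in the local invariants, detect the sum-zero constraint by characters of $\Z/M\Z$, identify the Euler factor at $p\notin\Pex$ as $1+\psi^*(\Frob(p))\Nm{p}^{-s}$ with $\psi^*$ a class function on $G$, compare with Artin $L$-functions to locate the pole of order $b^*$ at $s=1$ (achieved when $\chi$ vanishes on the relevant subgroups $H_g$), apply a Delange/Selberg--Delange Tauberian theorem, and handle positivity and the skew-field restriction exactly as in the discriminant case via local-condition extension and Möbius inversion. This is the same route the paper takes in Subsection~2.7 together with Subsections~2.3--2.6, so the proposal is correct and essentially identical in approach.
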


\begin{remark}
  \label{rmk:bstar-positive}
  We have $b^* > 0$ because we assumed that $j \geq 2$ or $d \geq 2$, which by \Cref{lem:FKS} implies $\cycgcd(g)\geq2$ for some $g\in G$.
\end{remark}

\subsection{Combinatorial formulation of the counting problem}
\label{ssn:reduction-combi}

In this subsection, we rephrase the counting problem combinatorially with the help of the Albert--Brauer--Hasse--Noether theorem.
First, we specialize the criterion from \Cref{lem:csa-inclusion-criterion} to the case of number fields:

\begin{lemma}
  \label{lem:csa-inclusion-criterion-nf}
  Let $L$ be a central simple $Z$-algebra of dimension $M^2$.
  For every place $v$ of $Z$, let $\lambda_v$ be the element of $\Z/M\Z$ such that $\inv{L_v}=\frac{\lambda_v}{M}$.
  Then, the following are equivalent:
  \begin{enumerate}[label=(\roman*)]
    \item
      \label{csa-inclusion-criterion-nf-i}
      There is an embedding $K\into L$ of $Z$-algebras.
    \item
      \label{csa-inclusion-criterion-nf-ii}
      For each place~$w$ of $F$, lying above a place $v$ of $Z$, we have $dm \mid d_w \lambda_v - d j \kappa_w$ in $\Z/M\Z$.
  \end{enumerate}
\end{lemma}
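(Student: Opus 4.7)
The plan is to deduce this from the abstract criterion of \Cref{lem:csa-inclusion-criterion} by translating the Brauer group equation $[L\otimes_Z F] = [K] + [R]$ into a system of local invariant equations via the Albert--Brauer--Hasse--Noether sequence \pef{ses-brauer}. By \Cref{lem:csa-inclusion-criterion}, an embedding $K\into L$ exists if and only if there is a central simple $F$-algebra $R$ of dimension $j^2$ (noting $j=M/(dm)$ is an integer by our setup $M=dmj$) such that $[L\otimes_Z F]=[K]+[R]$. Since $R$ has dimension $j^2$, its exponent (equal to its index over a global field) divides $j$, so its local invariant at a place $w$ of $F$ can be written $\frac{\rho_w}{j}=\frac{dm\rho_w}{M}$ for a unique $\rho_w\in\Z/j\Z$.

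For a place $w$ of $F$ above $v$ of $Z$, the base-change formula gives $\inv{(L\otimes_Z F)_w}=d_w\inv{L_v}=\frac{d_w\lambda_v}{M}$, while $\inv{K_w}=\frac{\kappa_w}{m}=\frac{dj\kappa_w}{M}$. Thus the Brauer equation reads, place-by-place,
\[
  d_w\lambda_v \;\equiv\; dj\kappa_w + dm\rho_w \pmod{M}.
\]
Such a $\rho_w\in\Z/j\Z$ exists (locally) if and only if $dm$ divides $d_w\lambda_v-dj\kappa_w$ in $\Z/M\Z$, which is exactly condition \ref{csa-inclusion-criterion-nf-ii}. The direction \ref{csa-inclusion-criterion-nf-i}$\Rightarrow$\ref{csa-inclusion-criterion-nf-ii} is then immediate.

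For the converse, assuming \ref{csa-inclusion-criterion-nf-ii}, define $\rho_w\in\Z/j\Z$ as the unique solution of the congruence. The main task is to verify that the family $\left(\frac{\rho_w}{j}\right)_w$ satisfies the constraints listed after \pef{ses-brauer}, so that ABHN produces a central simple $F$-algebra $R$ of dimension $j^2$ realizing it. Cofinite-vanishing follows since $\lambda_v$ and $\kappa_w$ vanish outside a finite set of places. The archimedean constraints are checked by cases: at a complex $w$, either $v$ is complex (and $\lambda_v=\kappa_w=0$) or $v$ is real with $d_w=2$ (so $2\lambda_v\equiv 0\pmod M$), yielding $\rho_w=0$; at a real $w$ (so $v$ real, $d_w=1$), the values $\lambda_v\in\{0,M/2\}$ and $\kappa_w\in\{0,m/2\}$ force $dm\rho_w\in\{0,M/2\}\pmod M$, whence $\rho_w\in\{0,j/2\}$, which is a valid real invariant. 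The global sum condition $\sum_w\rho_w\equiv 0\pmod j$ reduces, after multiplying by $dm$, to
\[
  dm\sum_w\rho_w \;\equiv\; d\sum_{v}\lambda_v \;-\; dj\sum_w\kappa_w \pmod{M},
\]
where we used $\sum_{w|v}d_w=d$; the right-hand side vanishes modulo $M=dmj$ since $\sum_v\lambda_v\equiv 0\pmod M$ (global constraint on $L$) and $\sum_w\kappa_w\equiv 0\pmod m$ (global constraint on $K$).

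The main subtlety I expect is the archimedean case-check: one must confirm that condition \ref{csa-inclusion-criterion-nf-ii} automatically rules out the pathological scenarios (e.g., $j$ odd while some real local invariant is forced to be $\frac{1}{2}$), which it does precisely because the congruence $dm\rho_w\equiv\frac{M}{2}\pmod M$ with $\rho_w\in\Z/j\Z$ already requires $j$ to be even. Once this is verified, ABHN yields $R$, and \Cref{lem:csa-inclusion-criterion} gives the embedding $K\into L$.
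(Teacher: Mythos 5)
Your proof is correct, but it takes a genuinely different (and longer) route than the paper's. Both arguments start from \Cref{lem:csa-inclusion-criterion}, which reduces the question to the existence of a central simple $F$-algebra $R$ of dimension $j^2$ with $[L\otimes_Z F]=[K]+[R]$. You then write down the local invariants $\frac{\rho_w}{j}$ that $R$ would have to possess, deduce the congruence $dm\rho_w \equiv d_w\lambda_v - dj\kappa_w \pmod{M}$, and for the converse direction you reconstruct $R$ via the Albert--Brauer--Hasse--Noether sequence \pef{ses-brauer}, which requires verifying all the constraints listed after that sequence: cofinite vanishing, the archimedean restrictions (including the parity issue you correctly flag), and the global sum condition. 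The paper avoids all of that bookkeeping by noting that the existence of $R$ of dimension $j^2$ with $[L\otimes_Z F]=[K]+[R]$ is equivalent to the statement that the index (equivalently, over a global field, the exponent) of the \emph{already-given} class $[L\otimes_Z F]-[K]\in\Br(F)$ divides $j$; by the injectivity of $\Br(F)\to\bigoplus_w\Br(F_w)$ in \pef{ses-brauer}, this exponent condition can be tested one place at a time, and the local computation immediately yields condition \ref{csa-inclusion-criterion-nf-ii}. The archimedean and sum constraints are then automatic, since $[L\otimes_Z F]-[K]$ is already a legitimate global Brauer class. What your approach buys is a more hands-on, constructive description of $R$; what it costs is the explicit verification of constraints that the paper's argument sidesteps entirely. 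Both are valid; the paper's is shorter.
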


\begin{proof}
  By \Cref{lem:csa-inclusion-criterion}, $K$ embeds in $L$ if and only if there is a central simple $F$-algebra $R$ of dimension~$j^2$ such that $[L \otimes_Z F] = [K] + [R]$ in $\Br(F)$.
  This amounts to the condition that the index of $[L \otimes_Z F] - [K]$ divide $j$.
  Since index and exponent coincide, and by the exact sequence of~\Cref{ses-brauer}, this means that for every place $w$ of $F$, we have
  $
    j
    \cdot
    ([L \otimes_Z F_w] - [K_w]) = 0
  $ in $\Br(F_w)$.
  By \cite[(31.9)]{reiner}, if $w$ is place of $F$ lying above a place~$v$ of $Z$, then $\inv{L \otimes_Z F_w} = [F_w:Z_v] \cdot \inv{L_v} = d_w \frac{\lambda_v} M$.
  Recall also that $\inv{K_w} = \frac {\kappa_w} m$.
  We finally obtain that $K$ embeds in $L$ if and only if, for each place $w$ of $F$, lying above a place~$v$ of~$Z$, the following equality holds in~$\Q/\Z$:
  \begin{align*}
    0
    & =
    j
    \cdot
    \Big(
      d_w \frac {\lambda_v} M
      -
      \frac {\kappa_w} m
    \Big)
    \\
    &
    =
    \frac{d_w\lambda_v}{dm} - \frac{j\kappa_w}{m}.
  \end{align*}
  This equality amounts to the divisibility $dm \mid d_w\lambda_v - dj\kappa_w$ in $\Z/M\Z$.
\end{proof}

We now give a combinatorial description of inner Galois extensions of $K$ of degree $n$ with center~$Z$:

\begin{definition}
  \label{def:Lambda}
  Let $\Lambda$ be the set of maps $\lambda: \Pm \rightarrow \Z/M\Z$ with finite support (i.e., identically zero outside a finite set) satisfying the following conditions:
  \begin{enumerate}[label=(\Roman*)]
    \item
      \label{def:Lambda:complex}
      for all complex places $v \in \Pm$, we have $\lambda(v)=0$.
    \item
      \label{def:Lambda:real}
      for all real places $v \in \Pm$, we have $\lambda(v)\in\{0,\frac M2\}$ (necessarily, $\lambda(v)=0$ if $M$ is odd).
    \item
      \label{def:Lambda:div}
      for all places $v \in \Pm$ and all places $w|v$ of $F$, we have $dm \mid d_w\lambda(v) - dj\kappa_w$ in $\Z/M\Z$.
    \item
      \label{def:Lambda:sum}
      $\sum_{v \in \Pm} \lambda(v) = 0$ in $\Z/M\Z$.
  \end{enumerate}
  Let $\Lambda'\subseteq\Lambda$ be the set of maps that additionally satisfy:
  \begin{enumerate}[label=(V)] 
    \item
      \label{def:Lambda:gcd}
      $\gcd_v \lambda(v) = 1$ in $\Z/M\Z$.
  \end{enumerate}
\end{definition}

\begin{theorem}
  \label{thm:ext-vs-map}
  There is a bijection between the set of isomorphism classes of inner Galois extensions $L/K$ of degree $n=dj^2$ with center $Z(L)=Z$ and the set $\Lambda$.
  Let $L$ be such an extension and $\lambda \in \Lambda$ be the corresponding map.
  Then, $L$ is a division algebra if and only if $\lambda$ lies in $\Lambda'$.
  Moreover, the norm $d(L/Z)$ of the relative discriminant of $L$ over its center $Z$ equals the following quantity~$d(\lambda)$, computed in terms of the map $\lambda$ alone:
  \[
    d(\lambda)
    \coloneqq
    \prod_{p\textnormal{ prime of }Z}
      \Nm{p}^{
        M
        \Bigl(
          M
          -
          \gcd\bigl(
            M, \, \lambda(p)
          \bigr)
        \Bigr)
      }.
  \]
\end{theorem}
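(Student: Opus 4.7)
The plan is to identify inner Galois extensions $L|K$ with center $Z$ with central simple $Z$-algebras into which $K$ embeds, parametrize these by local invariants via Brauer theory, and then translate the embedding criterion of \Cref{lem:csa-inclusion-criterion-nf} into the conditions defining $\Lambda$. For the reduction, I would apply \Cref{prop:you-can-forget-the-embedding} together with \Cref{lem:inner-galext-incl-center} (which ensures that any central simple $Z$-algebra $L$ admitting an embedding of $K$ is automatically an inner Galois extension of $K$, since $Z(L)=Z\subseteq F=Z(K)$); this identifies isomorphism classes of inner Galois extensions of $K$ of degree $n$ with center $Z$ with isomorphism classes of central simple $Z$-algebras $L$ of dimension $M^2$ admitting a $Z$-algebra embedding of $K$. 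The dimension check $M^2 = \dim_Z L = [L:K]\cdot\dim_Z K = dj^2\cdot dm^2$ comes out correctly.

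For the parametrization, I would invoke the Albert--Brauer--Hasse--Noether summary of \Cref{sssn:brauer-refresher}: isomorphism classes of central simple $Z$-algebras of dimension $M^2$ are in bijection with finitely-supported maps $\lambda:\Pm\to\Z/M\Z$ satisfying \ref{def:Lambda:complex}, \ref{def:Lambda:real}, and \ref{def:Lambda:sum}, via the rule $\inv{L_v}=\lambda(v)/M$. Then \Cref{lem:csa-inclusion-criterion-nf} says precisely that $K$ embeds into the corresponding $L$ if and only if $\lambda$ also satisfies \ref{def:Lambda:div}. Combined with the reduction above, this yields the claimed bijection between $\Lambda$ and the set of isomorphism classes of inner Galois extensions $L|K$ of degree $n$ with center $Z$.

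It remains to identify skew fields and the discriminant. A central simple $Z$-algebra $L$ of dimension $M^2$ is a skew field if and only if $\ind(L)=M$; since index and exponent coincide for central simple algebras over global fields, this is the condition that $[L]\in\Br(Z)$ has order $M$. By \eqref{ses-brauer}, this order is the least common multiple of the orders of the local invariants $\lambda(v)\in\Z/M\Z$, which in the convention of \Cref{sssn:terminology} equals $M/\gcd_v\lambda(v)$. Hence $L$ is a skew field if and only if $\gcd_v\lambda(v)=1$, i.e., $\lambda\in\Lambda'$. The discriminant formula then follows from a direct application of \eqref{eqn:discr-csa} to $L$ viewed as a central simple $Z$-algebra of dimension $M^2$ with local invariants $\lambda(p)/M$. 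No step poses a genuine obstacle; the only mild subtlety is keeping careful track of the identification in \Cref{sssn:terminology} between elements of $\Z/M\Z$ and positive divisors of $M$ when computing orders and greatest common divisors.
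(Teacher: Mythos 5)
Your proposal is correct and follows essentially the same route as the paper: reduce via \Cref{prop:you-can-forget-the-embedding} (and \Cref{lem:inner-galext-incl-center}) to counting central simple $Z$-algebras of dimension $M^2$ in which $K$ embeds, parametrize these by local invariants using Albert--Brauer--Hasse--Noether, translate the embedding condition via \Cref{lem:csa-inclusion-criterion-nf} into condition~\ref{def:Lambda:div}, characterize skew fields via the index-equals-exponent fact, and read off the discriminant from \eqref{eqn:discr-csa}. The only slight addition in your write-up is the explicit observation that \Cref{lem:inner-galext-incl-center} guarantees any such $L$ is automatically inner Galois over $K$, which the paper leaves implicit.
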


\begin{proof}
  By \Cref{prop:you-can-forget-the-embedding}, isomorphism classes of inner Galois extensions of $K$ of degree $n$ with center $Z$ are in bijection with equivalence classes of central simple $Z$-algebras of dimension $M^2$ in which~$K$ embeds.
  By the characterizations of \Cref{sssn:brauer-refresher}, specifying a central simple $Z$-algebra~$L$ of dimension~$M^2$ is the same as giving a map $\lambda : \Pm \to \Z/M\Z$ with finite support satisfying conditions \ref{def:Lambda:complex}, \ref{def:Lambda:real} and \ref{def:Lambda:sum} of \Cref{def:Lambda}.
  The local invariant of $L$ at a place $v$ of $Z$ is then given by $\frac{\lambda(v)}{M} \in \Q/\Z$.
  By \Cref{lem:csa-inclusion-criterion-nf}, the existence of an embedding of $K$ into $L$ is equivalent to condition \ref{def:Lambda:div}.

  The central simple algebra $L$ is a division algebra if and only if it has index $M$.
  Since index and exponent coincide for central simple algebras over number fields, this is equivalent to the condition that the invariants $\inv{L_v}$ have least common denominator $M$, which is in turn equivalent to \ref{def:Lambda:gcd}.

  The formula for the discriminant follows from \Cref{par:discr-centalg}.
\end{proof}

For non-exceptional places, the condition \ref{def:Lambda:div} of \Cref{def:Lambda} takes a much simpler form:

\begin{lemma}
  \label{lem:csa-inclusion-unramified}
  Let $v \in \Pm \setminus \Pex$.
  Then, condition \ref{def:Lambda:div} of \Cref{def:Lambda} holds for all $w|v$ if and only if $\frac {d m} {\cycgcd(\Frob(v))}$ divides $\lambda(v)$.
\end{lemma}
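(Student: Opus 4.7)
The plan is to unfold condition \ref{def:Lambda:div} at a non-exceptional place, translate it through the standard description of splitting of unramified primes in terms of Frobenius orbits, and then reduce it by a Bezout argument to a single divisibility.

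First, I would simplify condition \ref{def:Lambda:div} using that $v \notin \Pex$: by \Cref{def:pex}, $v$ is non-archimedean, unramified in $F$, and $\kappa_w = 0$ for every place $w$ of $F$ above $v$. So \ref{def:Lambda:div} reduces to the conjunction, over $w \mid v$, of $dm \mid d_w \lambda(v)$ in $\Z/M\Z$.

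Next I would invoke the standard description: since $v$ is unramified in the Galois closure $\hat F$, the places $w$ of $F$ above $v$ correspond to the orbits of $\Frob(v)$ on the $d$ embeddings $F \into \hat F$, and the local degree $d_w$ equals the size of the corresponding orbit (the residue degree, as there is no ramification). Hence the conjunction above becomes: $dm \mid s \cdot \lambda(v)$ in $\Z/M\Z$ for every orbit size $s$ of $\Frob(v)$.

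The key step is a Bezout argument. If $s_1, \ldots, s_r$ are the orbit sizes, then the conditions $dm \mid s_i \lambda(v)$ for all $i$ are equivalent to the single condition $dm \mid \gcd(s_1,\ldots,s_r)\lambda(v) = \cycgcd(\Frob(v))\cdot\lambda(v)$: one direction is immediate (each $s_i$ is a multiple of the gcd), the other uses a Bezout combination $\cycgcd(\Frob(v)) = \sum_i a_i s_i$. Finally, since $\cycgcd(\Frob(v))$ divides $d$, hence $dm$, hence $M$, the condition $dm \mid \cycgcd(\Frob(v))\cdot\lambda(v)$ in $\Z/M\Z$ rearranges to $\tfrac{dm}{\cycgcd(\Frob(v))} \mid \lambda(v)$ in $\Z/M\Z$, as desired.

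I do not foresee any real obstacle; the only subtlety is matching divisibility in $\Z$ with the paper's convention that divisibility in $\Z/M\Z$ is interpreted via the associated positive divisors of $M$, but this is automatic here because every quantity appearing ($dm$, $\cycgcd(\Frob(v))$, $\tfrac{dm}{\cycgcd(\Frob(v))}$) already divides $M$.
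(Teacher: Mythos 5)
Your proof is correct and follows essentially the same path as the paper's: reduce condition (III) at a non-exceptional place to $dm \mid d_w\lambda(v)$, identify the local degrees $d_w$ with the Frobenius orbit sizes, and package the resulting system of divisibilities into a single one involving $\cycgcd(\Frob(v))$. The only cosmetic difference is that you combine the conditions via a Bezout identity $\cycgcd(\Frob(v))=\sum_i a_i s_i$, whereas the paper first rewrites each condition as $\frac{dm}{\gcd(dm,d_w)}\mid\lambda(v)$ and takes an lcm, then uses $\gcd_w d_w\mid d$ to simplify; these are two equivalent formulations of the same elementary gcd manipulation, and both correctly handle the $\Z/M\Z$ divisibility convention because all relevant moduli divide $M$.
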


\begin{proof}
  Since the prime $v$ is not exceptional, we have $\kappa_w = 0$ for primes $w|v$ of $F$.
  Thus, condition~\ref{def:Lambda:div} amounts to $\lambda(v)$ being a multiple of $\frac{dm}{\gcd(dm,d_w)}$ for all $w|v$.
  This means that $\lambda(v)$ is a multiple of $\lcm_{w|v} \frac{dm}{\gcd(dm,d_w)} = \frac{dm}{\gcd_{w\mid v}\gcd(dm,d_w)}$.
  Since $\gcd_{w\mid v} d_w$ divides~$\sum_{w\mid v} d_w = d$, we have $\gcd_{w\mid v}\gcd(dm,d_w) = \gcd_{w\mid v} d_w$.
  To conclude, it remains to show that $\gcd_{w \mid v} d_w = \cycgcd(\Frob(v))$.
  
  Since the prime $v$ is non-exceptional, it is unramified in $F$.
  Pick a representative $g \in G$ of the conjugacy class $\Frob(v)$.
  Orbits of the action of $g$ on $\{1,\dots,d\}$ correspond bijectively to primes $w|v$ of~$F$, and the size of an orbit is the corresponding local degree $d_w$.
  Hence, $\gcd_{w|v} d_w$ is the greatest common divisor of the sizes of the orbits of $g$.
  By definition, this is $\cycgcd(\Frob(v))$.
\end{proof}

\subsection{Setting up the Dirichlet series}
\label{ssn:key-dirichlet}

In this subsection, we set up a Dirichlet series for the counting problem and rewrite it as a sum of Euler products.

We fix a divisor $\tau$ of $M$, which is used in \Cref{ssn:restriction-divalg} to sieve out extensions which are not division algebras.
(For proving just part \ref{thm:main-inner-i} of \Cref{thm:main-inner}, one can take $\tau=1$.)

Let $S$ be a finite set of places of $Z$ containing $\Pex$.
Let $\xi$ be a map $S \to \Z/M\Z$ satisfying conditions \ref{def:Lambda:complex}--\ref{def:Lambda:div} of \Cref{def:Lambda} for all $v\in S$, and such that $\tau$ divides $\xi(v)$ for all $v\in S$.
Define
\[
\sigma_\xi
\coloneqq
\sum_{v\in S}\xi(v)
\]
and let $d(\xi)$ denote the contribution of places in $S$ to the discriminant:
\[
  d(\xi)
  \coloneqq
  \prod_{p \in S}
    \Nm{p}^{
      M
      \Bigl(
        M -
        \gcd\bigl(
          M, \,
          \xi(p)
        \bigr)
      \Bigr)
    }.
\]
Let $\Lambda_{S,\xi,\tau}\subseteq\Lambda$ be the set of maps $\lambda:\Pm\rightarrow\Z/M\Z$ in $\Lambda$ whose restriction to $S$ is $\xi$ and such that $\tau$ divides $\lambda(v)$ for all places $v \in \Pm$.
Via the bijection of \Cref{thm:ext-vs-map}, elements of $\Lambda_{S,\xi,\tau}$ correspond to isomorphism classes of central simple $Z$-algebras of dimension $M^2$ in which $K$ embeds, whose local invariants at places $v \in S$ are given by $\frac {\xi(v)} M$, and whose index divides~$\frac M \tau$.
Moreover, if $\lambda \in \Lambda_{S,\xi,\tau}$ and~$L/K$ is the corresponding extension, then $d(L/Z) = d(\lambda)$.
We are thus led to count maps $\lambda\in\Lambda_{S,\xi,\tau}$ with $d(\lambda) \leq X$.
For this, we introduce the Dirichlet series:
\[
  f_{S,\xi,\tau}(s)
  =
  \sum_{\lambda\in\Lambda_{S,\xi,\tau}}
    d(\lambda)^{-s}.
\]
To specify a map $\lambda\in\Lambda_{S,\xi,\tau}$, we only need to specify its restriction to $\Pm\setminus S$.
Unraveling definitions and using \Cref{lem:csa-inclusion-unramified}, this lets us write:
\begin{equation}
  \label{eqn:f-i}
  f_{S,\xi,\tau}(s)
  =
  d(\xi)^{-s}
  \sum_{
    \text{(see below)}
  }
    \,\,
    \prod_{\substack{p\in\Pm\setminus S\\\lambda(p)\neq0}}
      \Nm{p}^{
        -s M
        \Bigl(
          M - \gcd \bigl( M, \, \lambda(p) \bigr)
        \Bigr)
      }.
\end{equation}
where the sum is taken over finitely supported maps $\lambda : \Pm \setminus S \to \Z/M\Z$ such that:
\begin{enumerate}[label=(\roman*)]
  \item
    \label{cond-lambda-i}
    $\sigma_\xi + \sum_{p\in\Pm\setminus S}\lambda(p) = 0$ in $\Z/M\Z$ 
  \item
    \label{cond-lambda-ii}
    for all primes $p \in \Pm\setminus S$, both $\tau$ and $\frac {d m} {\cycgcd(\Frob(p))}$ divide $\lambda(p)$
\end{enumerate}
We encode condition \ref{cond-lambda-i} by the character sum
\[
  \frac1M
  \sum_{\charindex=0}^{M-1}
    \e\leftl(
      \frac\charindex M
      \left(
        \sigma_\xi +
        \sum_{p\in\Pm\setminus S}
          \lambda(p)
      \right)
    \rightr)
  =
  \frac1M
  \sum_{\charindex=0}^{M-1}
    \e\leftl(
      \frac{\charindex\sigma_\xi}M
    \rightr)
    \prod_{\substack{
      p\in\Pm\setminus S\\
      \lambda(p)\neq0
    }} 
      \e\leftl(
        \frac{\charindex\lambda(p)}M
      \rightr).
\]
which equals $1$ if \ref{cond-lambda-i} holds and $0$ otherwise.
We also let
$
  \eta_{\tau, p}
  \coloneqq
  \lcm(
    \frac{d m}{\cycgcd(\Frob(p))}, \,
    \tau
  )
$, so that condition~\ref{cond-lambda-ii} rewrites as $\eta_{\tau, p} | \lambda(p)$.
Plugging this into \Cref{eqn:f-i} lets us write the Dirichlet series as a finite sum of Euler products:
\begin{equation}
  \label{eqn:f-sum-eulprod}
  f_{S,\xi,\tau}(s)
  =
  \frac{d(\xi)^{-s}}M
  \sum_{\charindex=0}^{M-1}
    \left(
      \e\leftl(
        \frac{\charindex\sigma_\xi}M
      \rightr)
      \prod_{p\in\Pm\setminus S}
        \,\,
        \sum_{\substack{
          \lambda\in\Z/M\Z\\
          \eta_{\tau, p} \mid \lambda
        }}
          \,\,
          \e\leftl(
            \frac{\charindex\lambda}M
          \rightr)
          \Nm{p}^{-sM\big(M-\gcd(M,\lambda)\big)}
    \right)
  .
\end{equation}
We give a name to the Euler factor:
\begin{equation*}
  \mathbf{f}_{\tau,\charindex,p}(s)
  \coloneqq
  \sum_{\substack{
    \lambda \in \Z/M\Z \\
    \eta_{\tau, p} \mid \lambda
  }}
    \e\leftl(
      \frac{\charindex\lambda}M
    \rightr)
    \Nm{p}^{
      -sM
      \big(
        M-\gcd(M,\lambda)
      \big)
    }.
\end{equation*}
\Cref{eqn:f-sum-eulprod} then rewrites as:
\begin{equation}
  \label{eqn:split-f-into-chars}
  f_{S,\xi,\tau}(s)
  =
  \frac{d(\xi)^{-s}}M
  \sum_{\charindex=0}^{M-1}
    \e\leftl(
      \frac{\charindex\sigma_\xi}M
    \rightr)
    \prod_{p\in\Pm\setminus S}
      \mathbf{f}_{\tau,\charindex,p}(s).
\end{equation}
Split up the sum defining $\mathbf{f}_{\tau,\charindex,p}(s)$, grouping values of $\lambda$ according to the greatest common divisor $g = \gcd(M,\lambda)$, which must satisfy $\eta_{\tau, p} \mid g$ and $g \mid M$, and writing $\lambda = g \lambda'$ with
$
  \lambda'
  \in
  \left(
    \Z/\frac Mg\Z
  \right)^{\times}
$.
We have:
\begin{align*}
  \mathbf{f}_{\tau,\charindex,p}(s)
  &=
  \sum_{\substack{
    g\geq1 \textnormal{  such that} \\
    \eta_{\tau, p} \mid g \textnormal{ and } g \mid M
  }}
    \,\,
    \sum_{\lambda' \in \left(\Z/\frac Mg\Z\right)^\times}
      \e\leftl(
        \frac{\charindex g\lambda'}M
      \rightr)
      \Nm{p}^{-sM(M-g)}
  \\
  &=
  1
  +
  \sum_{\substack{
    1 \leq g < M \textnormal{ such that} \\
    \eta_{\tau, p} \mid g \textnormal{ and } g \mid M
  }}
  \,\,
    \sum_{\lambda' \in \left(\Z/\frac Mg\Z\right)^\times}
      \e\leftl(
        \frac{\charindex g\lambda'}M
      \rightr)
      \Nm{p}^{-sM(M-g)}
  .
\end{align*}
Finally, for $g|M$, define
\[
  g_{\charindex}\leftl(
    \frac M g
  \rightr)
  =
  \sum_{
    \lambda'
    \in
    (\Z/\frac M g\Z)^{\times}
  }
    \e\leftl(
      \frac
        {\charindex g \lambda'}
        {M}
    \rightr)
\]
so that
\begin{equation}
  \label{eqn:eulfact-f}
  \mathbf{f}_{\tau,\charindex,p}(s)
  =
    1
    +
    \sum_{\substack{
      1 \leq g < M \textnormal{ such that} \\
      \eta_{\tau, p} \mid g \textnormal{ and } g \mid M
    }}
      \,
      g_{\charindex}\leftl(
        \frac M g
      \rightr)
      \Nm{p}^{-s M (M - g)}
  .
\end{equation}

\begin{remark}
  \label{rmk:general-gchi}
  The multiplicative function $g_{\charindex}$ takes the following value at an arbitrary $\frac M g$ dividing~$M$:
  \[
    g_{\charindex}\leftl(
      \frac M g
    \rightr)
    =
    \frac
      {\varphi\leftl(
        \frac M g
      \rightr)}
      {
        \varphi\leftl(
          \frac M {\gcd(g \charindex, \, M)}
        \rightr)
      }
    \cdot
    \mu\leftl(
      \frac M {\gcd(g \charindex, \, M)}
    \rightr).
  \]
  For instance, the function $g_0$ is Euler's totient function $\phi$, and $g_1$ is the Möbius function $\mu$.

  %
\end{remark}

\subsection{Analytic properties of the Dirichlet series}
\label{ssn:analytic-props}

All notations are as in the previous subsection.
We define $a = M \left( M - \Mu \right)$ as in \Cref{thm:main-inner}.
Our goal is to describe the behavior of the Euler products $\prod_{p\in\Pm\setminus S} \, \mathbf{f}_{\tau,\charindex,p}(s)$ in the half-plane $\{\Re(s) \geq \frac 1 a\}$.
We first show that the most significant term in each Euler factor $\mathbf f_{\tau,\charindex,p}(s)$ is determined by the Frobenius automorphism associated to $p$ (\Cref{lem:fapprox}).
We later use this information to relate the analytic properties of $\prod_p \mathbf f_{\tau,\charindex,p}(s)$ to those of a product of powers of the Artin L-functions associated to the field extension $\hat F/Z$ (\Cref{lem:analytic-stuff} and \Cref{lem:approx}).
We then obtain asymptotics for the distribution of inner Galois extensions of $L/K$ with degree $n$ and center~$Z$ using Delange's Tauberian theorem (\Cref{cor:asymptotics}).
This establishes most of \iref{thm:main-inner}{thm:main-inner-i}, the only missing point being that the leading coefficient is positive when such an extension exists.

Let $\psi_{\tau,\charindex} \colon G \to \C$ be the following class function:
\begin{equation}
  \label{eqn:def-psi}
  \psi_{\tau,\charindex}(g)
  \coloneqq
  \begin{cases}
    g_\charindex(u)
    &
    \text{if } \lcm\leftl(\frac {d m} {\cycgcd(g)}, \, \tau\rightr) \text{ divides } \Mu, \\
    0 &\text{otherwise}.
  \end{cases}
\end{equation}
We use $\psi_{\tau,\charindex}$ to approximate the Euler factor $\mathbf f_{\tau,\charindex,p}(s)$ as follows:

\begin{lemma}
  \label{lem:fapprox}
  There is a constant $\epsilon>0$ such that, for $\Re(s) \geq \frac 1 a$:
  \[
    \mathbf f_{\tau,\charindex,p}(s)
    =
    1
    +
    \psi_{\tau,\charindex}\big(
      \Frob(p)
    \big)
    \Nm p^{-as}
    +
    \O\leftl(
      \Nm p^{-(1+\varepsilon)as}
    \rightr),
  \]
  where both $\epsilon$ and the implied constant in the $\O$-term are independent of $p$ and $s$.
\end{lemma}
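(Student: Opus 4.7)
The plan is to substitute $e = M/g$ in the sum of \Cref{eqn:eulfact-f}. Writing $D \eqdef M/\eta_{\tau,p}$, the condition ``$\eta_{\tau,p} \mid g \mid M$ with $g < M$'' becomes ``$e \mid D$ with $e > 1$'', and the identity $M - g = M(1 - 1/e)$ yields
\[
\mathbf{f}_{\tau,\chi,p}(s) - 1 \;=\; \sum_{\substack{e \mid D \\ e > 1}} g_\chi(e)\,\Nm{p}^{-sM^2(1-1/e)}.
\]
Since $a = M^2(1-1/u)$, the leading contribution should come from $e = u$, so I need to isolate this term and show that all remaining summands decay faster by a definite margin.

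The key arithmetic input is the divisibility $D \mid jU$. Indeed, by construction $\eta_{\tau,p}$ is a multiple of $dm/\cycgcd(\Frob(p))$, so $D = M/\eta_{\tau,p}$ divides $j \cdot \cycgcd(\Frob(p))$, which itself divides $jU$ by definition of $U$. Consequently, every prime factor of $D$ is at least $u$, and therefore every divisor $e > 1$ of $D$ is at least $u$.

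To extract the main term I split into two cases, matching the piecewise definition of $\psi_{\tau,\chi}$. When $u \mid D$, the divisor $e = u$ is present in the sum and contributes exactly $g_\chi(u)\,\Nm{p}^{-as}$; this matches $\psi_{\tau,\chi}(\Frob(p))\Nm{p}^{-as}$ because the condition ``$\eta_{\tau,p} \mid M/u$'' in the definition of $\psi_{\tau,\chi}$ is equivalent to $u \mid D$. When $u \nmid D$, both $\psi_{\tau,\chi}(\Frob(p))$ and the $e = u$ term vanish. In either case, every surviving summand has $e > u$.

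The main technical point is producing an error bound that is uniform in $p$. Because $D \mid jU$ and $jU$ is a fixed integer, the set of $e > u$ that can occur in the remainder is a finite subset of the divisors of $jU$, independent of $p$. Setting $u^* \eqdef \min\{e \mid jU : e > u\}$ (or declaring the remainder to be zero if no such $e$ exists) and $\epsilon \eqdef (1/u - 1/u^*)/(1 - 1/u) > 0$, one obtains $M^2(1-1/e) \geq (1+\epsilon) a$ for every remainder $e$. Combined with the uniform bound $|g_\chi(e)| \leq \phi(e)$ and the boundedness of the number of remainder terms, this gives the remainder estimate $O(\Nm{p}^{-(1+\epsilon)as})$ with implied constant and $\epsilon$ independent of $p$ and $s$, completing the approximation.
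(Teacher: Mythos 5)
Your proof is correct and follows essentially the same route as the paper's: both show that the exponent parameter $e = M/g$ must be a divisor of $jU$ greater than $1$, hence at least $u$ (the smallest prime factor of $jU$), isolate the $e=u$ contribution as the main term, and bound the rest. Your version is somewhat more explicit than the paper's about why $\epsilon$ and the implied constant can be chosen uniformly (finiteness of the divisor set of the fixed integer $jU$, $|g_\chi(e)|\le\phi(e)$), which the paper asserts without elaboration.
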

\begin{proof}
  Consider the expression of $\mathbf{f}_{\tau,\charindex,p}(s)$ given in \Cref{eqn:eulfact-f}.
  If a proper divisor $g$ of $M$ occurs for a summand in $\mathbf{f}_{\tau,\charindex,p}(s)$, then $\frac{M}{g}$ divides $\frac{M}{\eta_{\tau,p}}$, which divides $\frac{M}{dm/\cycgcd(\Frob(p))} = j \cdot \cycgcd(\Frob(p))$, which divides~$j U$.
  By definition, the smallest divisor of $j U$ besides $1$ is $u$.
  Hence, the largest occurring proper divisor $g$ of~$M$ is at most $\Mu$.
  The corresponding summand, if it occurs, is $g_\charindex(u)\Nm{p}^{-as}$.
  It follows that, for some $\varepsilon > 0$:
  \begin{align*}
    \mathbf{f}_{\tau,\charindex,p}(s)
    &
    =
    \begin{cases}
      1
      +
      g_\charindex(u) \Nm{p}^{-as}
      +
      \O\leftl(
        \Nm{p}^{-(1+\varepsilon)as}
      \rightr)
      &
      \text{if }
      \eta_{\tau, p} \mid \Mu
      \\
      1
      +
      \O\leftl(
        \Nm{p}^{-(1+\varepsilon)as}
      \rightr)
      &
      \text{otherwise}
    \end{cases}
    \\
    &
    =
    1
    +
    \psi_{\tau,\charindex}\big(
      \Frob(p)
    \big)
    \Nm{p}^{-as}
    +
    \O\leftl(
      \Nm{p}^{-(1+\varepsilon)as}
    \rightr)
    .
    \qedhere
  \end{align*}
\end{proof}

\paragraph{An analytic lemma.}

We now prove \Cref{lem:analytic-stuff}, in which we approximate ``Frobenian'' Euler products using products of Artin L-functions.
(See \cite[Section~2]{prescribed-norms} for an introduction to Frobenian functions.)
This is used later to analyze the behavior of $\prod_p f_{\tau,\charindex,p}(s)$.

\begin{definition}
  When $z$ is a complex number, we define the holomorphic non-vanishing function $s \mapsto (s-1)^z$ on the open half-plane $\{\Re(s) > 1\}$ as $s \mapsto \exp(z\log(s-1))$, where $\log$ is the unique determination of the complex logarithm on the open half-plane $\{\Re(s) > 0\}$ taking real values on the positive real half-line.
\end{definition}

Consider any irreducible representation $\rho$ of $G$ and let $\chi:G\to\C$ be the corresponding character.
It is well-known that the Artin L-function $L(\rho,s)$ is holomorphic non-vanishing for $\Re(s)\geq1$, except for a simple pole at $s=1$ when $\rho$ is the trivial representation. (See \cite[p.\ 225]{cassfr}.)
For every place $p \in \Pm$, let $\mathbf{h}_{\chi,p}(s)$ be the Euler factor at $p$ in the Euler product defining the L-function $L(\rho,s)$ (cf. \cite[Chap.~VII, (10.1)]{neukirch}), so that $L(\rho, s) = \prod_{p\in\Pm}\mathbf{h}_{\chi,p}(s)$.
By definition, the Euler factors $\mathbf{h}_{\chi,p}(s)$ are holomorphic and non-vanishing for $\Re(s)>0$, and the product $\prod_{p\in\Pm}\mathbf{h}_{\chi,p}(s)$ is absolutely convergent when $\Re(s)>1$.
When $p \in \Pm$ is an unramified prime, the Euler factor is given by
\[
  \mathbf{h}_{\chi, p}(s)
  =
  \det\Bigl(
    I
    -
    \rho\big(
      \Frob(p)
    \big)
    \Nm{p}^{-s}
  \Bigr)^{-1}.
\]
Expanding the characteristic polynomial, we obtain the following estimate for $\Re(s)\geq\frac12$:
  \begin{align*}
    \mathbf{h}_{\chi, p}(s)
    &
    =
    \Big(
      1
      -
      \tr\big(
        \rho(\Frob(p))
      \big)
      \Nm{p}^{-s}
      +
      \O\big(
        \Nm{p}^{-2s}
      \big)
    \Big)^{-1}
    \\
    &
    =
    1
    +
    \tr\big(
      \rho(\Frob(p))
    \big)
    \Nm{p}^{-s}
    +
    \O\big(
      \Nm{p}^{-2s}
    \big)
    \\
    &
    =
    1
    +
    \chi(\Frob(p))
    \Nm{p}^{-s}
    +
    \O\big(
      \Nm{p}^{-2s}
    \big).
  \end{align*}
  We now consider an arbitrary class function $\psi:G\to\C$.
  We define its \emph{average} as its inner product with the trivial character:
\[
  \avg(\psi)
  \coloneqq
  \frac{1}{\card{G}}
  \sum_{g \in G}
    \psi(g).
\]
  Recall that $\psi$ decomposes as a sum over the finitely many irreducible characters $\chi$ of $G$:
  \[
    \psi
    =
    \sum_{\chi}
      \psichi
      \chi.
  \]
  We extend the definition of $\mathbf{h}_{\psi, p}$ to class functions $\psi$ which are not irreducible characters, by setting
  \[
    \mathbf{h}_{\psi, p}
    \coloneqq
    \prod_{\chi}
      \mathbf{h}_{\chi, p}^\psichi
  \]
  in which the power is interpreted as follows: for every irreducible character $\chi$, the function $\mathbf{h}_{\chi, p}$ is holomorphic non-vanishing on the open simply connected subset $\{\Re(s) > 0\}$, and thus admits a logarithm $\log \mathbf{h}_{\chi, p}$; note that $\mathbf{h}_{\chi, p}(s) \underset{s \to \infty}\longrightarrow 1$ and choose the logarithm specifically so that  $\log \mathbf{h}_{\chi, p}(s) \underset{s \to \infty}\longrightarrow 0$, which uniquely determines it; now set $\mathbf{h}_{\chi, p}^\psichi = \exp(\psichi \log \mathbf{h}_{\chi, p})$.

\begin{lemma}
  \label{lem:analytic-stuff}
  Let $\psi$ be a class function $G \to \C$.
  Then:
  \begin{enumerate}[label=(\roman*)]
    \item
      \label{lem:analytic-stuff-i}
      $\mathbf{h}_{\psi, p}$ is holomorphic non-vanishing on the open half-plane $\{\Re(s) > 0\}$ for all places $p\in\Pm$.
    \item
      \label{lem:analytic-stuff-ii}
      For $\Re(s) \geq \frac12$, and all unramified primes $p\in\Pm$,
      \[
        \mathbf{h}_{\psi, p}(s)
        =
        1
        +
        \psi\bigl(\Frob(p)\bigr)
        \Nm{p}^{-s}
        +
        \O_\psi\leftl(
          \Nm{p}^{-2s}
        \rightr)
      \]
      where the implied constant in the $\O$-term is independent of both $p$ and $s$.
    \item
      \label{lem:analytic-stuff-iii}
      The product
      $
        (s-1)^{\avg(\psi)}
          \prod_{\substack{
            p \in \Pm
          }}
            \mathbf{h}_{\psi, p}(s)
      $,
      which is absolutely convergent for $\Re(s) > 1$, extends to a holomorphic non-vanishing function $h_{\psi}$ on the closed half-plane $\{ \Re(s) \geq 1 \}$ with
      \[
        h_\psi(1) = (\Res_{s=1}\zeta_Z(s))^{\avg(\psi)}\prod_{\chi\neq1} L(\chi,1)^{\langle\psi,\chi\rangle}.
      \]
  \end{enumerate}
\end{lemma}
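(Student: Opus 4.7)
The plan is to build the local factors $\mathbf{h}_{\psi,p}$ from Euler factors of Artin $L$-functions. First I would expand $\psi$ in the basis of irreducible characters of $G$: write $\psi = \sum_\chi c_\chi \chi$ with $c_\chi = \langle\psi,\chi\rangle \in \C$, noting in particular that $c_{1_G} = \avg(\psi)$. For each unramified prime $p$ of $Z$ and each irreducible character $\chi$ afforded by a representation $(V_\chi,\rho_\chi)$, let
\[
L_p(s,\chi) \eqdef \det\!\bigl(1 - \rho_\chi(\Frob(p))\Nm{p}^{-s}\bigr)^{-1}
\]
be the usual local Euler factor, and set
\[
\mathbf{h}_{\psi,p}(s) \eqdef \prod_\chi L_p(s,\chi)^{c_\chi},
\]
where the complex powers are taken via the principal branch of $\log$.

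Property \ref{lem:analytic-stuff-i} holds because the eigenvalues of $\rho_\chi(\Frob(p))$ are roots of unity ($\Frob(p)$ having finite order in $G$), so the zeros of $\det(1 - \rho_\chi(\Frob(p))\Nm{p}^{-s})$ lie on $\Re(s)=0$; each $L_p(s,\chi)$ is therefore holomorphic and non-vanishing on the simply connected half-plane $\{\Re(s)>0\}$, making the complex power well defined and making $\mathbf{h}_{\psi,p}$ holomorphic and non-vanishing there. For \ref{lem:analytic-stuff-ii}, the elementary expansion $L_p(s,\chi) = 1 + \chi(\Frob(p))\Nm{p}^{-s} + O(\Nm{p}^{-2s})$ holds with implied constant depending only on $|G|$ (all eigenvalues having modulus $1$ and $\dim V_\chi \leq |G|$); taking logarithms, multiplying by $c_\chi$, summing over $\chi$ and re-exponentiating, the linear terms combine via $\sum_\chi c_\chi \chi(\Frob(p)) = \psi(\Frob(p))$ to yield the claimed expansion.

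For \ref{lem:analytic-stuff-iii}, the partial product over unramified primes factors as
\[
\prod_{p\text{ unramified}} \mathbf{h}_{\psi,p}(s) = \prod_\chi L^{\mathrm{unr}}(s,\chi)^{c_\chi},
\]
where $L^{\mathrm{unr}}(s,\chi)$ denotes the Artin $L$-function stripped of its (finitely many) ramified Euler factors. Those missing factors are holomorphic and non-vanishing on $\{\Re(s)>0\}$ by the same root-of-unity argument, so their removal does not affect the analytic behaviour on $\{\Re(s)\geq 1\}$, and the analysis reduces to $\prod_\chi L(s,\chi)^{c_\chi}$. I would then invoke the classical theorem (provable via Brauer induction reducing to Hecke $L$-functions, together with the Hadamard/de la Vallée Poussin non-vanishing on the line $\Re(s)=1$) that $L(s,\chi)$ is holomorphic and non-vanishing on the closed half-plane $\{\Re(s)\geq 1\}$ for every non-trivial irreducible character $\chi$, while $L(s,1_G) = \zeta_Z(s)$ has only a simple pole (of non-zero residue) at $s=1$. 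Consequently the product behaves, near $s=1$, as $(s-1)^{-\avg(\psi)}$ times a holomorphic and non-vanishing function on $\{\Re(s)\geq 1\}$; multiplying by $(s-1)^{\avg(\psi)}$ produces the desired $h_\psi$.

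The step I expect to be most delicate is not the algebra but this analytic input in \ref{lem:analytic-stuff-iii}: the holomorphy and non-vanishing of $L(s,\chi)$ on the whole closed half-plane $\{\Re(s)\geq 1\}$ for non-trivial irreducible $\chi$ is a deep classical fact rather than an elementary computation. Since it can be cited from standard references, no genuinely new analytic work is required, and the remaining effort is bookkeeping: uniformity of the implied $O$-constants, absolute convergence of $\prod_p \mathbf{h}_{\psi,p}(s)$ on $\{\Re(s)>1\}$ (which follows from \ref{lem:analytic-stuff-ii}), and compatibility of the branches of the complex powers across the half-plane (ensured by its simple connectedness after removing $s=1$).
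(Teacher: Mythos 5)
Your proof is correct and follows essentially the same route as the paper: define $\mathbf{h}_{\psi,p}$ as the product over irreducible characters $\chi$ of the local Artin Euler factors raised to the power $\langle\psi,\chi\rangle$, verify (i) and (ii) from the fact that $\rho_\chi(\Frob(p))$ has root-of-unity eigenvalues, and derive (iii) by citing the holomorphy and non-vanishing of Artin $L$-functions on $\{\Re(s)\geq 1\}$ for nontrivial $\chi$ together with the simple pole of $\zeta_Z$ at $s=1$ (the paper cites \cite[p.~225]{cassfr} for this). The only cosmetic difference is that the paper first treats irreducible characters and then combines, whereas you assemble the general case directly; your choice of branch of $\log$ agrees with the paper's, since for a non-vanishing function on a simply connected half-plane tending to $1$ at $+\infty$ the principal branch is the one vanishing at $+\infty$.
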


\begin{proof}
  We have shown these properties for irreducible characters $\psi=\chi$ above.
  Now, consider an arbitrary class function $\psi = \sum_{\chi} \psichi \chi$.
  Point \ref{lem:analytic-stuff-i} follows immediately from the definition.
  For \ref{lem:analytic-stuff-ii}, we compute
  \begin{align*}
    \mathbf{h}_{\psi, p}(s)
    &
    =
    \prod_{\chi}
      \left(
        1
        +
        \chi(\Frob(p))
        \Nm{p}^{-s}
        +
        \O\big(
          \Nm{p}^{-2s}
        \big)
      \right)^\psichi
    \\
    &
    =
    1
    +
    \sum_{\chi}
      \psichi
      \chi(\Frob(p))
      \Nm{p}^{-s}
    +
    \O\big(
      \Nm{p}^{-2s}
    \big)
    \\
    &
    =
    1
    +
    \psi(\Frob(p))
    \Nm{p}^{-s}
    +
    \O\big(
      \Nm{p}^{-2s}
    \big).
  \end{align*}
  For \ref{lem:analytic-stuff-iii}, note that
  \begin{align*}
    (s-1)^{\avg(\psi)}
      \displaystyle
      \prod_{\substack{
        p \in \Pm
      }}
        \mathbf{h}_{\psi, p}(s)
    &
    =
    (s-1)^{\langle \psi, 1\rangle}
      \displaystyle
      \prod_{\substack{
        p \in \Pm
      }}
        \,\,
        \prod_{\chi}
          \mathbf{h}_{\chi, p}^\psichi(s)
    =
    h_1^{\langle \psi, 1\rangle}
    \cdot
    \prod_{\chi \neq 1}
      h_{\chi}^\psichi.
  \end{align*}
  Each of the finitely many factors extends to a holomorphic non-vanishing function on the closed half-plane $\{ \Re(s) \geq 1 \}$ as shown above.
  This establishes \ref{lem:analytic-stuff-iii} with $h_{\psi} = \prod_{\chi} h_{\chi}^\psichi$.
\end{proof}

\begin{remark}
  A very similar result can be found in \cite[Proposition~2.3]{prescribed-norms}, with essentially the same proof.
  The main difference is that they use Euler factors of the form $1+\psi(\Frob(p))\Nm{p}^{-s}$, which (as they point out in their Remark~2.4) can vanish for small primes.
  They therefore need to exclude all primes $p$ with $\Nm{p}\leq\max_{g\in G}|\psi(g)|$, which would be somewhat inconvenient for us. Moreover, our expression for $h_\psi(1)$ is a finite product, whereas the expression in \cite[Equation~(2.5)]{prescribed-norms} is an infinite product that is only conditionally convergent.
\end{remark}

\paragraph{Application.}

Denote by $h_{\psi_{\tau, \charindex}}$ the function associated as in \iref{lem:analytic-stuff}{lem:analytic-stuff-iii} to the class function~$\psi_{\tau, \charindex}$ from \Cref{eqn:def-psi}.

\begin{lemma}
  \label{lem:approx}
  The function
  \begin{equation}
    \label{eqn:def-tildef}
    \tilde f_{S,\tau,\charindex}(s)
    \coloneqq
    (s-1)^{\avg(\psi_{\tau,\charindex})}
    \prod_{p\in\Pm\setminus S}
      \mathbf{f}_{\tau,\charindex,p}
      \left(
        \frac s a
      \right)
  \end{equation}
  extends to a non-vanishing holomorphic function on the closed half-plane $\{\Re(s) \geq 1\}$, given by the expression:
  \begin{equation}
    \label{eqn:formula-tildef}
    \tilde f_{S, \tau, \charindex}(s)
    =
    h_{\psi_{\tau, \charindex}}(s)
    \left(
      \prod_{\substack{
        p \in S
      }}
        \mathbf{h}_{\psi_{\tau, \charindex}, p}(s)^{-1}
    \right)
    \left(
      \prod_{
        p \in \Pm \setminus S
      }
        \mathbf{f}_{\tau,\charindex,p}\leftl(
          \frac s a
        \rightr)
        \mathbf{h}_{\psi_{\tau, \charindex}, p}(s)^{-1}
    \right).
  \end{equation}
  in which the infinite product is absolutely convergent on the closed half-plane $\{\Re(s) \geq 1\}$.
\end{lemma}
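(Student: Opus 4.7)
The plan is to derive formula~\eqref{eqn:formula-tildef} from the definition~\eqref{eqn:def-tildef} by a formal manipulation of Euler factors, and then to verify its analytic properties piece by piece. Since $S \supseteq \Pex$, every $p \in \Pm \setminus S$ is unramified in $\hat F$, so \iref{lem:analytic-stuff}{lem:analytic-stuff-i} guarantees that $\mathbf{h}_{\psi_{\tau,\chi},p}$ is available and non-vanishing on $\{\Re(s) > 0\}$. The idea is to multiply and divide each factor $\mathbf{f}_{\tau,\chi,p}(s/a)$ by $\mathbf{h}_{\psi_{\tau,\chi},p}(s)$, and to complete the resulting product $\prod_{p \in \Pm \setminus S} \mathbf{h}_{\psi_{\tau,\chi},p}(s)$ into a product over \emph{all} unramified primes by inserting the finite compensating factor $\prod_{p \in S\text{ unramified}} \mathbf{h}_{\psi_{\tau,\chi},p}(s)^{-1}$. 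Combined with the prefactor $(s-1)^{\avg(\psi_{\tau,\chi})}$, \iref{lem:analytic-stuff}{lem:analytic-stuff-iii} then identifies this contribution as $h_{\psi_{\tau,\chi}}(s)$, producing the stated expression.

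Having the formula, holomorphy and non-vanishing on $\{\Re(s) \geq 1\}$ reduce to checking each of its three pieces. The factor $h_{\psi_{\tau,\chi}}(s)$ is directly supplied by \iref{lem:analytic-stuff}{lem:analytic-stuff-iii}, and the finite product over unramified $p\in S$ is holomorphic non-vanishing on $\{\Re(s) > 0\}$ by \iref{lem:analytic-stuff}{lem:analytic-stuff-i}. The only delicate part is the infinite product $\prod_{p \in \Pm \setminus S} \mathbf{f}_{\tau,\chi,p}(s/a)\mathbf{h}_{\psi_{\tau,\chi},p}(s)^{-1}$: a priori the Euler product $\prod \mathbf{f}_{\tau,\chi,p}(s/a)$ only converges for $\Re(s) > 1$, and one must exploit the division by $\mathbf{h}_{\psi_{\tau,\chi},p}(s)$ to push convergence past the line $\Re(s) = 1$.

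For this, the plan is to combine \Cref{lem:fapprox} (read at $s/a$, which turns $\Nm{p}^{-as}$ into $\Nm{p}^{-s}$) with \iref{lem:analytic-stuff}{lem:analytic-stuff-ii}: both $\mathbf{f}_{\tau,\chi,p}(s/a)$ and $\mathbf{h}_{\psi_{\tau,\chi},p}(s)$ admit the same leading expansion $1 + \psi_{\tau,\chi}(\Frob(p))\Nm{p}^{-s}$, with errors of size $O(\Nm{p}^{-(1+\varepsilon)s})$ and $O(\Nm{p}^{-2s})$ respectively. Inverting the second (valid by non-vanishing) and multiplying, the $\Nm{p}^{-s}$-terms cancel, leaving each factor of the form $1 + O(\Nm{p}^{-(1+\varepsilon')s})$ with a uniform implied constant, where $\varepsilon' = \min(\varepsilon, 1) > 0$. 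Absolute and locally uniform convergence on an open neighborhood of $\{\Re(s) \geq 1\}$ then follows from $\sum_p \Nm{p}^{-(1+\varepsilon')\Re(s)} < \infty$, and non-vanishing is automatic because $\sum_p \log\bigl(1 + O(\Nm{p}^{-(1+\varepsilon')s})\bigr)$ converges absolutely. The main obstacle is verifying that the uniformity of the error terms in the two input lemmas composes cleanly after the inversion step, so that the cancellation produces a genuine $O(\Nm{p}^{-(1+\varepsilon')s})$ bound with constant independent of both $p$ and $s$; once this is in hand, everything else is standard Euler product bookkeeping.
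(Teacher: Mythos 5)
Your proposal is correct and follows essentially the same route as the paper: you derive \eqref{eqn:formula-tildef} from \eqref{eqn:def-tildef} by multiplying and dividing each $\mathbf{f}_{\tau,\chi,p}(s/a)$ by $\mathbf{h}_{\psi_{\tau,\chi},p}(s)$, invoke \iref{lem:analytic-stuff}{lem:analytic-stuff-iii} and \iref{lem:analytic-stuff}{lem:analytic-stuff-i} for the first two factors, and then pair \Cref{lem:fapprox} with \iref{lem:analytic-stuff}{lem:analytic-stuff-ii} to cancel the $\Nm{p}^{-s}$ term and obtain $1 + O(\Nm{p}^{-(1+\varepsilon')s})$ factors with uniform constant, exactly as the paper does with exponent $\min(2,1+\varepsilon)$.
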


\begin{proof}
  For $\Re(s) > 1$, the expression for $\tilde f_{S, \tau, \charindex}(s)$ in \Cref{eqn:formula-tildef} follows directly from unfolding the definition of $h_{\psi_{\tau, \charindex}}$ (\iref{lem:analytic-stuff}{lem:analytic-stuff-iii}).
  The first factor $h_{\psi_{\tau, \charindex}}$ is holomorphic non-vanishing on the closed half-plane $\{\Re(s) \geq 1\}$ by \iref{lem:analytic-stuff}{lem:analytic-stuff-iii}.
  The second factor (the product over primes in $S$) is holomorphic non-vanishing on the open half-plane $\{\Re(s) > 0\}$ as a finite product of such functions, cf. \iref{lem:analytic-stuff}{lem:analytic-stuff-i}.
  We now check that the third factor, which is an infinite product, is absolutely convergent on the closed half-plane $\{\Re(s) \geq 1\}$.
  To this end, we describe the asymptotic behavior of its factors as $\Nm{p} \to \infty$.
  By \Cref{lem:fapprox}, we have:
  \[
    \mathbf f_{\tau,\charindex,p}(s)
    = 1 + \psi_{\tau,\charindex}(\Frob(p)) \Nm p^{-as} + O(\Nm p^{-(1+\varepsilon)as}).
  \]
  Any prime not in $S$ is unramified, thus by \iref{lem:analytic-stuff}{lem:analytic-stuff-ii}, we have:
  \[
    \mathbf{h}_{\psi_{\tau, \charindex}, p}(s)
    =
    1
    +
    \psi_{\tau, \charindex}\big(\Frob(p)\big)
    \Nm{p}^{-s}
    +
    \O\leftl(
      \Nm{p}^{-2s}
    \rightr).
  \]
  Therefore:
  \begin{align*}
    \mathbf{f}_{\tau,\charindex,p}\leftl(
      \frac s a
    \rightr)
    \mathbf{h}_{\psi_{\tau, \charindex}, p}(s)^{-1}
    &
    =
    1
    +
    \O\big(
      \Nm{p}^{-\min(2, \, 1+\varepsilon) s}
    \big).
  \end{align*}
  Hence, the infinite product in \Cref{eqn:formula-tildef} is absolutely convergent on the open half-plane $\{ \Re(s) > \frac 1 {\min(2, \, 1+\varepsilon)}\}$.
\end{proof}

\begin{remark}
  \Cref{lem:approx} can be interpreted as saying that
  $
  \prod_{p\in\Pm\setminus S}
    \mathbf{f}_{\tau,\charindex,p}(s)
  $
  has its rightmost ``pole'' at $s=a$, and that the ``order'' of this ``pole'' is the rational number $\avg(\psi_{\tau,\charindex})$.
  However, $a$ is often not an actual pole as the infinite product is not meromorphic on the closed half-plane when $\avg(\psi_{\tau, \charindex})$ is not an integer.
\end{remark}

Since $u$ is a prime number, the number $g_\charindex(u)$ is easy to compute:
\begin{equation}
  \label{eqn:gchiu}
  g_{\charindex}(u)
  =
  \sum_{\lambda'\in(\Z/u\Z)^\times}
    \e\leftl(
      \frac{\charindex\lambda'}{u}
    \rightr)
  =
  \left(
  \sum_{\lambda'\in\Z/u\Z}
    \e\leftl(
      \frac{\charindex\lambda'}{u}
    \rightr)
  \right)
  -
  1
  =
  \left\lbrace
    \begin{array}{cl}
      u-1 & \text{if } u|\charindex \\
      -1 & \text{otherwise.}
    \end{array}
  \right.
\end{equation}

\begin{lemma}
  \label{lem:avg-psi-computation}
  Let $k \in \{0, \ldots, M-1\}$.
  For $\tau=1$, we have:
  \[
    \avg(\psi_{1,\charindex})
    =
    \begin{cases}
      (u-1)\beta & \text{if } u|\charindex,\\
      -\beta & \text{otherwise}
    \end{cases}
  \]
  and, for any $\tau \mid M$, we have
  $
    \avg(\psi_{\tau,\charindex}) \leq (u-1)\beta
  $.
\end{lemma}

\begin{proof}
  By \Cref{eqn:def-psi}, we have
  \begin{align*}
    \avg(\psi_{\tau,\charindex})
    =
    \frac1{\card G}
    \sum_{g \in G}
      \psi_{\tau,\charindex}(g)
    =
    \frac{g_\charindex(u)}{\card G}
    \card{\left\{
      g\in G
      \verti
      \lcm\left(
        \frac{dm}{\cycgcd(g)},\tau
      \right)
      \textnormal{ divides }
      \frac{M}{u}
    \right\}}.
  \end{align*}
  The claims follow using \Cref{eqn:gchiu} and \Cref{eqn:beta-otherdef}.
\end{proof}

\paragraph{Application of Delange's Tauberian theorem.}
We have the following expression for the Dirichlet series $f_{S,\xi,\tau}$ counting elements of $\Lambda_{S,\xi,\tau}$ by discriminant:
\begin{align*}
  f_{S,\xi,\tau}(s)
  &
  =
  \frac{d(\xi)^{-s}}M
  \sum_{\charindex=0}^{M-1}
    \e\leftl(
      \frac{\charindex\sigma_\xi}M
    \rightr)
    \prod_{p\in\Pm\setminus S}
      \mathbf{f}_{\tau,\charindex,p}(s)
  &
  \text{
    by
    \Cref{eqn:split-f-into-chars}\,%
  }
  \\
  &
  =
  \frac{d(\xi)^{-s}}M
  \sum_{\charindex=0}^{M-1}
    \e\leftl(
      \frac{\charindex\sigma_\xi}M
    \rightr)
    \tilde f_{S,\tau,\charindex}(as)
    (as-1)^{-\avg(\psi_{\tau,\charindex})}
  &
  \text{
    by
    \Cref{eqn:def-tildef}.%
  }
\end{align*}

This already implies a weak form of \iref{thm:main-inner}{thm:main-inner-i}:

\begin{corollary}
  \label{cor:asymptotics}
  We have the asymptotic estimate:
  \[
    \card{
      \left\{
        \lambda \in \Lambda_{S,\xi,\tau}
        \verti
        d(\lambda) \leq X
      \right\}
    }
    =
    C_{S,\xi,\tau}
    X^{1/a}
    \log(X)^{(u-1)\beta-1}
    +
    o\leftl(
      X^{1/a}
      \log(X)^{(u-1)\beta-1}
    \rightr)
  \]
  where
  \begin{equation}
    \label{eqn:expr-constant}
    C_{S,\xi,\tau}
    =
    \frac
    {1}
    {
      a^{(u-1)\beta-1}
      \cdot
      \Gamma\big(
        (u-1)\beta
      \big)
    }
    \cdot
    \frac 
    {
      d(\xi)^{-1/a}
    }
    M
    \sum_{\substack{
      0 \leq \charindex \leq M-1 \\
      \avg(\psi_{\tau, \charindex}) = (u-1)\beta
    }}
      \e\leftl(
        \frac{\charindex\sigma_\xi}M
      \rightr)
      \tilde f_{S,\tau,\charindex}(1)
    .
  \end{equation}
\end{corollary}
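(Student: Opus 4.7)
The plan is to apply Delange's Tauberian theorem to the Dirichlet series $f_{S,\xi,\tau}(s)$, using the decomposition displayed just before the corollary's statement:
\[
  f_{S,\xi,\tau}(s)
  =
  \frac{d(\xi)^{-s}}{M}
  \sum_{\chi=0}^{M-1}
    \e\!\left(\frac{\chi\sigma_\xi}{M}\right)
    \tilde f_{S,\tau,\chi}(as)
    (as-1)^{-\avg(\psi_{\tau,\chi})}.
\]
This Dirichlet series has non-negative real coefficients since it counts elements of $\Lambda_{S,\xi,\tau}$. The starting point is to observe that it converges absolutely on $\{\Re(s) > 1/a\}$: indeed, the expression above, combined with \Cref{lem:approx} (each $\tilde f_{S,\tau,\chi}$ is holomorphic on $\{\Re(s) \geq 1\}$) and the fact that $(as-1)^{-\avg(\psi_{\tau,\chi})}$ is holomorphic on $\{\Re(s) > 1/a\}$, gives a holomorphic extension to that half-plane.

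Next, I would identify the dominant singularity at $s = 1/a$. By \Cref{lem:avg-psi-computation}, $\avg(\psi_{\tau,\chi}) \leq (u-1)\beta$ for every $\chi$. Therefore, the factors $(as-1)^{-\avg(\psi_{\tau,\chi})}$ corresponding to indices $\chi$ for which $\avg(\psi_{\tau,\chi}) < (u-1)\beta$ contribute terms of strictly smaller order of growth at $s = 1/a$, while the terms with $\avg(\psi_{\tau,\chi}) = (u-1)\beta$ combine to yield the dominant singularity. Using the identity $(as-1)^{-(u-1)\beta} = a^{-(u-1)\beta}(s-1/a)^{-(u-1)\beta}$ and multiplying by $(s-1/a)^{(u-1)\beta}$, one checks that
\[
  (s-1/a)^{(u-1)\beta}
  f_{S,\xi,\tau}(s)
\]
extends continuously to the closed half-plane $\{\Re(s) \geq 1/a\}$, with value at $s=1/a$ equal to
\[
  g(1/a)
  \eqdef
  \frac{d(\xi)^{-1/a}}{M a^{(u-1)\beta}}
  \sum_{\substack{0 \leq \chi \leq M-1 \\ \avg(\psi_{\tau,\chi}) = (u-1)\beta}}
    \e\!\left(\frac{\chi\sigma_\xi}{M}\right)
    \tilde f_{S,\tau,\chi}(1).
\]
Non-dominant terms vanish at $s=1/a$ after this multiplication, since $\avg(\psi_{\tau,\chi})$ is always real and at most $(u-1)\beta$.

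Finally, I would invoke Delange's Tauberian theorem \cite[Théorème~III]{delange} (cf. \cite[Appendix~II, Theorem~I]{narkiewicz}) with abscissa of convergence $\sigma_0 = 1/a$, exponent $w = (u-1)\beta > 0$ (positivity holds here thanks to $u \geq 2$ and $\beta > 0$), and boundary function $g$. This yields the asymptotic
\[
  \card{\{\lambda \in \Lambda_{S,\xi,\tau} : d(\lambda) \leq X\}}
  \sim
  \frac{g(1/a)}{(1/a) \Gamma((u-1)\beta)}
  X^{1/a}
  (\log X)^{(u-1)\beta - 1},
\]
and substituting the expression for $g(1/a)$ displayed above gives exactly the constant $C_{S,\xi,\tau}$ of \Cref{eqn:expr-constant}. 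The main (minor) obstacle is verifying the regularity condition of Delange's theorem at the boundary line $\{\Re(s) = 1/a\}$ away from $s = 1/a$ itself: this follows because the functions $\tilde f_{S,\tau,\chi}(as)$ are holomorphic non-vanishing on $\{\Re(s) \geq 1/a\}$ by \Cref{lem:approx}, and each factor $(as-1)^{-\avg(\psi_{\tau,\chi})}$ is continuous on that line away from $s = 1/a$.
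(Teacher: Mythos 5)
Your proof is correct and takes essentially the same approach as the paper: the paper's own proof consists of the single observation that the displayed identity for $f_{S,\xi,\tau}(s)$ together with \Cref{lem:approx} and Delange's Tauberian theorem ``directly implies'' the corollary. Your proposal simply spells out the calculation the paper leaves implicit (isolating the dominant singularity at $s=1/a$ using \Cref{lem:avg-psi-computation}, rewriting $(as-1)^{-(u-1)\beta}=a^{-(u-1)\beta}(s-1/a)^{-(u-1)\beta}$, evaluating the boundary function at $s=1/a$, and matching the resulting constant to \Cref{eqn:expr-constant}), and the arithmetic of the leading coefficient checks out.
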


\begin{proof}
  We apply Delange's Tauberian theorem \cite[Théorème~III]{delange} as follows:
  Let
  \[
    \alpha(t)
    \coloneqq
    |\{\lambda \in \Lambda_{S,\xi,\tau} \ |\  d(\lambda) \leq e^t\}|,
  \]
  so that the function $f(s)$ in Delange's notation is
  \begin{align*}
    f(s)
    & =
    \int_0^\infty e^{-st}\alpha(t)\mathrm d t
    \ =\ s^{-1}\sum_{\lambda\in\Lambda_{S,\xi,\tau}}d(\lambda)^{-s}
    \ =\ s^{-1} f_{S,\xi,\tau}(s) \\
    & =
    s^{-1}
    \frac{d(\xi)^{-s}}M
    \sum_{\charindex=0}^{M-1}
      a^{-\avg(\psi_{\tau,\charindex})}
      \e\leftl(
        \frac{\charindex\sigma_\xi}M
      \rightr)
      \tilde f_{S,\tau,\charindex}(as)
      \left(s-\frac1a\right)^{-\avg(\psi_{\tau,\charindex})}.
  \end{align*}
  By \Cref{lem:avg-psi-computation}, the largest value taken by $\avg(\psi_{\tau, k})$ is $(u-1) \beta$.
  Hence, in Delange's notation, $\omega=(u-1)\beta$, and the function $g(s)$ in front of the factor $(s-\frac1a)^{-(u-1) \beta}$ is obtained by summing over values of~$k$ at which this maximal average is reached:
  \[
    g(s) =
    s^{-1} a^{-(u-1)\beta}
    \frac{d(\xi)^{-s}}M
    \sum_{\substack{
      0 \leq \charindex \leq M-1 \\
      \avg(\psi_{\tau, \charindex}) = (u-1)\beta
    }}
      \e\leftl(
        \frac{\charindex\sigma_\xi}M
      \rightr)
      \tilde f_{S,\tau,\charindex}(as).
  \]
  Then, Delange's theorem implies:
  \begin{align*}
  &|\{\lambda \in \Lambda_{S,\xi,\tau}\ |\ d(\lambda) \leq X\}|
  \\
  =\ &
  \alpha(\log X) \\
  =\ &
  \left(\frac{g(\frac1a)}{\Gamma((u-1)\beta)} + o(1)\right)
    X^{1/a}
    \log(X)^{(u-1) \beta  - 1}
  \\
  =\ &
    \left(
      \frac1{a^{(u-1)\beta-1} \Gamma\big((u-1)\beta\big)}
      \cdot
      \frac{d(\xi)^{-1/a}}M
      \sum_{\substack{
        0 \leq \charindex \leq M-1 \\
        \avg(\psi_{\tau, \charindex}) = (u-1)\beta
      }}
        \e\leftl(
          \frac{\charindex\sigma_\xi}M
        \rightr)
        \tilde f_{S,\tau,\charindex}(1)
      + o(1)
    \right)
    X^{1/a}
    \log(X)^{(u-1) \beta  - 1}.
  \end{align*}
  (Technically, Delange only states the asymptotic equivalence $\alpha(\log X)\sim C X^{1/a}\log(X)^{(u-1)\beta-1}$, assuming that the resulting constant $C$ is nonzero.
  However, one can check that the claim $\alpha(\log X)=(C+o(1))X^{1/a}\log(X)^{(u-1)\beta-1}$ follows in the same way, even when $C=0$.
  Alternatively, one can apply Delange's theorem to two functions $f_1(s)$ and $f_2(s)$ with $f(s)=f_1(s)-f_2(s)$ and such that~$f_2(s)$ has a positive constant $C_2$, and then subtract the resulting asymptotic statements.
  There are many sources of such functions $f_1(s)$, $f_2(s)$: for example one can take $f_2(s) \coloneq \prod_{p\in\Pm\setminus S}\mathbf{f}_{1,0,p}(s)$.)
\end{proof}

The real number $C_{S,\xi,\tau}$ is nonnegative because of its combinatorial interpretation.
However, at this point, we have not established that 
$C_{S,\xi,\tau}$ is nonzero: proving this fact (under the assumption that an extension indeed exists) is the focus of \Cref{ssn:positivity-leading-coeff}, and is the only piece of \iref{thm:main-inner}{thm:main-inner-i} that is still missing.

Note that \Cref{eqn:formula-tildef} gives an expression for $\tilde f_{S,\tau,\charindex}(1)$ which can be used to compute $C_{S, \xi, \tau}$:
\begin{equation}
  \label{eqn:formula-tildef-at1}
  \tilde f_{S, \tau, \charindex}(1)
  =
  h_{\psi_{\tau, \charindex}}(1)
  \left(
    \prod_{\substack{
      p \in S
    }}
      \mathbf{h}_{\psi_{\tau, \charindex}, p}(1)^{-1}
  \right)
  \left(
    \prod_{
      p \in \Pm \setminus S
    }
      \mathbf{f}_{\tau,\charindex,p}\leftl(
        \frac 1 a
      \rightr)
      \cdot
      \mathbf{h}_{\psi_{\tau, \charindex}, p}(1)^{-1}
  \right).
\end{equation}
The value of $h_{\psi_{\tau, \charindex}}(1)$ can itself be computed from the values (resp. residue for the trivial character) at $s=1$ of the Artin L-functions associated to the irreducible characters of $G$, cf.\ \iref{lem:analytic-stuff}{lem:analytic-stuff-iii}.

\begin{remark}
  \label{rk:constant-computation-when-psi-constant}
  An interesting special case arises when $\psi_{\tau,\charindex}  = \avg(\psi_{\tau,\charindex}) \in \C$ is a constant class function (i.e., a multiple of the trivial character) --- this is in particular always the case if $F=Z$.
  In that case, we can use the expression for the residue $\Res_{s=1}\zeta_Z(s)$ of the Dedekind zeta function of $Z$ at~$s=1$ given by the class number formula (for instance, it is $1$ if $Z=\Q$) to get a more concrete expression for~$\tilde f_{S, \tau, \charindex}(1)$ (and hence for the leading coefficient $C_{S,\xi,\tau}$):
  \[
    \tilde f_{S, \tau, \charindex}(1)
    =
    \left(
      \big(
        \Res_{s=1}\zeta_Z(s)
      \big)
      \cdot
      \prod_{\substack{
        p \in S
      }}
        \left(
          1 - \frac 1 {\Nm{p}}
        \right)
    \right)^{\psi_{\tau, \charindex}}
    \left(
      \prod_{
        p \in \Pm \setminus S
      }
        \mathbf{f}_{\tau,\charindex,p}\leftl(
          \frac 1 a
        \rightr)
        \cdot
        \leftl(
          1 - \frac 1 {\Nm{p}}
        \rightr)^{\psi_{\tau, \charindex}}
    \right).
  \]
\end{remark}

\subsection{Positivity of the leading coefficient}
\label{ssn:positivity-leading-coeff}

All notations are as in the previous subsection, and moreover we fix $\tau = 1$.
We also assume that there exists an inner Galois extension $L/K$ of degree $n$ with $Z(L) = Z$ whose local invariants at places~$v \in S$ are given by $\frac {\xi(v)} M$, and we denote by $\lambda_0$ the finitely supported map $\Pm \to \Z/M\Z$ corresponding to this extension.
(We refer to \Cref{thm:criteria-existence} for criteria to check whether such an extension exists.)
Our strategy to prove that the leading coefficient $C_{S, \xi, 1}$ in \Cref{cor:asymptotics} is nonzero relies on the following lemma:

\begin{lemma}
  \label{lem:extend-S-xi}
  If $S'$ is a finite set of places containing $S$, and $\xi' : S' \to \Z/M\Z$ is a map extending~$\xi$, then $C_{S', \xi', 1} \leq C_{S, \xi, 1}$.
  In particular, if $C_{S', \xi', 1}$ is nonzero, then $C_{S, \xi, 1}$ is nonzero.
\end{lemma}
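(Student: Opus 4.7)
The plan is to prove this by a direct monotonicity argument at the level of the counting problem, then pass to the leading coefficients via \Cref{cor:asymptotics}.

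First, I would observe that the hypothesis that $\xi'$ extends $\xi$ gives the inclusion $\Lambda_{S',\xi',1} \subseteq \Lambda_{S,\xi,1}$. Indeed, if $\lambda \in \Lambda_{S',\xi',1}$, then by definition $\lambda|_{S'} = \xi'$, hence $\lambda|_S = \xi'|_S = \xi$; the remaining defining conditions (finite support and \ref{def:Lambda:complex}--\ref{def:Lambda:sum}) are common to both sets, and the divisibility constraint with $\tau = 1$ is trivial on both sides. Consequently, for every $X > 0$:
\[
  \card{\left\{\lambda \in \Lambda_{S',\xi',1} \verti d(\lambda) \leq X\right\}}
  \;\leq\;
  \card{\left\{\lambda \in \Lambda_{S,\xi,1} \verti d(\lambda) \leq X\right\}}.
\]

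Next, I would apply \Cref{cor:asymptotics} to both sides. The exponents $a$, $(u-1)\beta$ depend only on $K$, $F$, $Z$, $j$ and $u$, and in particular are the same for $(S,\xi)$ and $(S',\xi')$. Dividing the above inequality through by $X^{1/a}\log(X)^{(u-1)\beta - 1}$ and letting $X \to \infty$ yields
\[
  C_{S',\xi',1} \;\leq\; C_{S,\xi,1},
\]
which is the first claim. (Note that $(u-1)\beta > 0$ since $u \geq 2$ and $\beta > 0$, so the log-factor is nonzero and the ratio is well defined.)

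For the second claim, since both constants are nonnegative (as leading coefficients of nonnegative counting functions), the inequality $C_{S',\xi',1} \leq C_{S,\xi,1}$ immediately gives that $C_{S',\xi',1} > 0$ implies $C_{S,\xi,1} > 0$. There is no real obstacle here: the entire argument is a soft monotonicity observation. The substantive work --- namely exhibiting a specific pair $(S',\xi')$ refining $(S,\xi)$ for which $C_{S',\xi',1}$ can be shown nonzero directly --- is presumably carried out in the subsequent lemmas, and this lemma simply packages the reduction.
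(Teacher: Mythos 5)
Your proof is correct and takes essentially the same approach as the paper: the inclusion $\Lambda_{S',\xi',1}\subseteq\Lambda_{S,\xi,1}$ gives the corresponding inequality of counting functions, and \Cref{cor:asymptotics} then gives $C_{S',\xi',1}\leq C_{S,\xi,1}$; the second claim follows since the constants are nonnegative. Your additional verifications (why the inclusion holds, that the exponents $a$ and $(u-1)\beta$ are unchanged and positive) are sound padding but not needed beyond what the paper itself says.
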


\begin{proof}
  When we extend $S$ and $\xi$, we are putting more constraints on the extensions we are counting and therefore there are fewer of them, i.e., $\Lambda_{S',\xi',1} \subseteq \Lambda_{S,\xi,1}$.
  This implies that:
  \[
    \card{
      \left\{
        \lambda \in \Lambda_{S',\xi',1}
        \verti
        d(\lambda) \leq X
      \right\}
    }
    \leq
    \card{
      \left\{
        \lambda \in \Lambda_{S,\xi,1}
        \verti
        d(\lambda) \leq X
      \right\}
    }
  \]
  and thus $C_{S', \xi', 1} \leq C_{S, \xi, 1}$ by \Cref{cor:asymptotics}.
\end{proof}

Instead of defining a new set $S'$ and a map $\xi' : S' \to \Z/M\Z$, we use the notational shortcut of repeatedly adding places to $S$ and extending $\xi$, until we can ensure that $C_{S, \xi, 1}$ is positive.
Recall from \Cref{eqn:expr-constant} and \Cref{lem:avg-psi-computation} that:
\[
  C_{S,\xi,1}
  =
  \frac
  {1}
  {
    a^{(u-1)\beta-1}
    \cdot
    \Gamma\big(
      (u-1)\beta
    \big)
  }
  \cdot
  \frac 
  {
    d(\xi)^{-1/a}
  }
  M
  \sum_{\substack{
    0 \leq \charindex \leq M-1 \\
    u|\charindex
  }}
    \e\leftl(
      \frac{\charindex\sigma_\xi}M
    \rightr)
    \cdot
    \tilde f_{S,1,\charindex}(1)
  .
\]
Note also that, as soon as $u|\charindex$, the class function $\psi_{1,\charindex}$ defined in \Cref{eqn:def-psi} does not depend on~$\charindex$.
For this reason, we simply let $\psi \coloneqq \psi_{1,0}$.
By \Cref{eqn:formula-tildef}, we have for $u|\charindex$:
\[
  \tilde f_{S, 1, \charindex}(1)
  =
  h_{\psi}(1)
  \left(
    \prod_{\substack{
      p \in S
    }}
      \mathbf{h}_{\psi, p}(1)^{-1}
  \right)
  \left(
    \prod_{
      p \in \Pm \setminus S
    }
      \mathbf{f}_{1,\charindex,p}\leftl(
        \frac 1 a
      \rightr)
      \cdot
      \mathbf{h}_{\psi, p}(1)^{-1}
  \right).
\]

Since the infinite product converges absolutely (\Cref{lem:approx}), we have:
\[
  \prod_{
    \substack{
      p \in \Pm \setminus S \\
      \Nm p > \kappa
    }
  }
    \left(
      \mathbf{f}_{1,\charindex,p}\leftl(
        \frac 1 a
      \rightr)
      \cdot
      \mathbf{h}_{\psi, p}(1)^{-1}
    \right)
  \underset{\kappa \to \infty}{\longrightarrow}
  1.
\]
Fix any $\epsilon \in (0, 1)$, and choose $\kappa$ such that, for all $0\leq\charindex\leq M-1$ with $u \mid \charindex$, we have:
\[
  \left|
    1
    -
    \prod_{
      \substack{
        p \in \Pm \setminus S \\
        \Nm p > \kappa
      }
    }
      \mathbf{f}_{1,\charindex,p}\leftl(
        \frac 1 a
      \rightr)
      \cdot
      \mathbf{h}_{\psi, p}(1)^{-1}
  \right|
  \leq
  \epsilon.
\]
Add to $S$ all the primes $p$ with $\Nm p \leq \kappa$ which are not already in $S$, extending $\xi$ by setting $\xi(p) = \lambda_0(p)$.
Now, define the following nonzero number:
\[
  c
  =
  h_{\psi}(1)
  \left(
    \prod_{\substack{
      p \in S
    }}
      \mathbf{h}_{\psi, p}(1)^{-1}
  \right)
\]
so that, for some complex numbers $\tilde\epsilon_{\charindex}$ of absolute value at most $\epsilon$, we have
$
  \tilde f_{S, 1, \charindex}(1)
  =
  c
  \cdot
  (1+\tilde \epsilon_{\charindex})
$
for all $\charindex$ divisible by $u$.
We also let
\[
  c^+
  =
  \frac
  {
    d(\xi)^{-1/a}
    \cdot
    c
  }
  {
    a^{(u-1)\beta-1}
    \cdot
    \Gamma\big(
      (u-1)\beta
    \big)
  }
\]
so that
\[
  C_{S,\xi,1}
  =
  \frac{c^+}{M}
  \sum_{\substack{
    0 \leq \charindex < M\\
    u|\charindex
  }}
    \e\leftl(
      \frac{\charindex\sigma_\xi}M
    \rightr)
    (1+\tilde\epsilon_\charindex).
\]
Add to $S$ the finitely many primes $p \in P \setminus S$ for which $\lambda_0(p)$ is nonzero, extending $\xi$ by setting $\xi(p) = \lambda_0(p)$.
Since~$\lambda_0$ satisfies condition \ref{def:Lambda:sum} of \Cref{def:Lambda}, we have $\sigma_\xi = \sum_{v\in S}\xi(v) = \sum_{v\in S}\lambda_0(v) = 0$ in $\Z/M\Z$.
Then,
\[
\frac{c^+}{M}
  \sum_{\substack{
    0 \leq \charindex < M \\
    u|\charindex
  }}
    \e\leftl(
      \frac{\charindex\sigma_\xi}M
    \rightr)
= \frac{c^+}{M} \cdot \frac{M}{u} \cdot 1
= \frac{c^+}{u}.
\]
We now have:
\begin{align*}
  \left|
    C_{S,\xi,1}
    -
    \frac {c^+} u
  \right|
  & =
  \left|
    \frac {c^+} M
    \sum_{\substack{
      0 \leq \charindex < M \\
      u|\charindex
    }}
      \e\leftl(
        \frac{\charindex\sigma_\xi}M
      \rightr)
      \tilde\epsilon_\charindex
  \right|
  \leq
  \frac {c^+} M
  \sum_{\substack{
    0 \leq \charindex < M \\
    u|\charindex
  }}
    \leftl|{
        \e\leftl(
          \frac {\charindex \sigma_\xi} M
        \rightr)
        \tilde \epsilon_{\charindex}
    }\rightr|
  \leq
  \frac {c^+} M
  \cdot
  \frac M u
  \cdot
  \epsilon
  =
  \epsilon \cdot \frac {c^+} u
  .
\end{align*}
Since $\epsilon$ was chosen strictly smaller than $1$, it follows that $C_{S,\xi,1}$ is nonzero.
This concludes the proof of \iref{thm:main-inner}{thm:main-inner-i}.


\subsection{Restriction to division algebras}
\label{ssn:restriction-divalg}

All notations are as in \Cref{ssn:key-dirichlet}.
Recall that elements of $\Lambda_{S,\xi,\tau}$ correspond to extensions whose local invariants are all divisible by $\tau$.
Let $\Lambda'_{S,\xi} = \Lambda' \cap \Lambda_{S,\xi,1}$ be the set of maps corresponding to central simple $Z$-algebras of dimension $M^2$ in which $K$ embeds, whose local invariants at places $v \in S$ are given by $\frac {\xi(S)} M$, and which are division algebras.
Note that (cf.\ condition \ref{def:Lambda:gcd} in \Cref{def:Lambda}):
\[
  \Lambda'_{S,\xi}
  =
  \Lambda_{S,\xi,1}
  \setminus
  \bigcup_{\substack{
    \tau|M \\
    \tau>1
  }}
    \Lambda_{S,\xi,\tau}.
\]
By the Möbius inversion formula, we get:
\[
  \card{
    \left\{
      \lambda \in \Lambda'_{S,\xi}
      \verti
      d(\lambda) \leq X
    \right\}
  }
  =
  \sum_{\tau|M}
    \mu(\tau)
    \card{
      \left\{
        \lambda \in \Lambda_{S,\xi,\tau}
        \verti
        d(\lambda) \leq X
      \right\}
    }.
\]
By \Cref{cor:asymptotics}, this implies:
\[
  \card{
    \left\{
      \lambda \in \Lambda'_{S,\xi}
      \verti
      d(\lambda) \leq X
    \right\}
  }
  =
  C'_{S,\xi}
  X^{1/a}
  \log(X)^{(u-1)\beta-1}
  +
  o\leftl(
    X^{1/a}
    \log(X)^{(u-1)\beta-1}
  \rightr)
\]
where
\[
  C'_{S,\xi}
  \coloneqq
  \sum_{\tau|M}
    \mu(\tau)
    C_{S,\xi,\tau}.
\]

All that is left is to show that $C'_{S,\xi}$ is positive when an extension exists.
We assume that $\Lambda'_{S,\xi}$ is not empty, and we fix an element $\lambda_0 \in \Lambda'_{S,\xi}$ (see \Cref{thm:criteria-existence} to see what this hypothesis means in terms of $S$ and $\xi$).
Extend $S$ by adding to it the finitely many primes $p$ of $Z$ not in~$S$ at which~$\lambda_0(p)$ is nonzero, and set $\xi(p) = \lambda_0(p)$ at these primes.
Since $\lambda_0 \in \Lambda'$, we now have $\gcd_{v \in S} \xi(v) = 1$.
Therefore, all central simple algebras associated to maps agreeing with $\xi$ on~$S$ are division algebras.
This ensures that for all divisors $\tau$ of $M$ besides $1$, we have $C_{S,\xi,\tau} = 0$, and thus $C'_{S,\xi} = C_{S,\xi,1}$.
Combined with \Cref{ssn:positivity-leading-coeff}, this implies the positivity of $C'_{S,\xi}$ (and this holds for the original ``non-extended''~$S$ and $\xi$ by a straightforward variant of \Cref{lem:extend-S-xi}).

We have now proved \iref{thm:main-inner}{thm:main-inner-ii}, completing the proof of \Cref{thm:main-inner}.

\subsection{Counting by product of ramified primes}
\label{subsn:prod-ram}

We now explain how to adapt the proof of \Cref{thm:main-inner} in order to show \Cref{thm:main-inner-prod-ram}.
If $L/K$ is an inner Galois extension of $K$ with center $Z$ corresponding to an element $\lambda \in \Lambda$, then the product~$\ram(L)$ of the primes of $Z$ ramified in $L$ equals the following quantity $\ram(\lambda)$, computed in terms of the map $\lambda$ alone:
\[
  \ram(\lambda)
  \coloneqq
  \prod_{\substack{p\textnormal{ prime of }Z\\\lambda(p)\neq0}} \Nm{p}.
\]
Now, consider the following Dirichlet series:
\[
  f^*_{S,\xi,\tau}(s)
  =
  \sum_{\lambda\in\Lambda_{S,\xi,\tau}}
    \ram(\lambda)^{-s}.
\]
Like in \Cref{ssn:key-dirichlet}, we rewrite the Dirichlet series as:
\begin{align*}
  f^*_{S,\xi,\tau}(s)
  &=
  \ram(\xi)^{-s}
  \sum_{
    \text{(as in \customref{Eq. (\ref*{eqn:f-i})}{eqn:f-i})}
  }
    \,\,\,\,
    \prod_{\substack{p\in\Pm\setminus S\\\lambda(p)\neq0}}
      \,\,
      \Nm{p}^{-s}
  \\&
  =
  \frac{\ram(\xi)^{-s}}M
  \,
  \sum_{\charindex=0}^{M-1}
    \e\leftl(
      \frac{\charindex\sigma_\xi}M
    \rightr)
    \prod_{p\in\Pm\setminus S}
      \mathbf{f}^*_{\tau,\charindex,p}(s).
\end{align*}
where we have defined:
\begin{align*}
  \mathbf{f}^*_{\tau,\charindex,p}(s)
  \coloneqq
  1
  +
  \sum_{\substack{
    0 \neq \lambda \in \Z/M\Z \\
    \eta_{\tau, p} \mid \lambda
  }}
    \e\leftl(
      \frac{\charindex\lambda}M
    \rightr)
    \Nm{p}^{-s}.
\end{align*}
The Euler factor $\mathbf{f}^*_{\tau,\charindex,p}(s)$ also equals 
$
  1
  +
  \psi_{\tau,\charindex}^*(\Frob(p))\Nm{p}^{-s}
$,
where the class function $\psi^*_{\tau,\charindex} : G \to \Z$ is defined as follows:
\[
  \psi^*_{\tau,\charindex}(g)
  \coloneqq
  \begin{cases}
    \gcd\leftl(
      j \cdot \cycgcd(g), \,
      \frac M\tau
    \rightr)
    -1
    &
    \textnormal{if }
    \gcd\leftl(
      j \cdot \cycgcd(g), \,
      \frac M\tau
    \rightr)
    \textnormal{ divides }
    \charindex \\
    -1 &\textnormal{otherwise}.
  \end{cases}
\]
The average of $\psi^*_{\tau,\charindex}$ is largest for $\tau=1$, $\charindex=0$, where it equals:
\[
  \avg\leftl(
    \psi^*_{1,0}(g)
  \rightr)
  =
  j
  \left(
    \frac1{\card{G}}
    \sum_{g\in G}
      \cycgcd(g)
  \right)
  -
  1
  =
  b^*.
\]
The rest of the proof of \Cref{thm:main-inner-prod-ram} goes through exactly as in Subsections \ref{ssn:analytic-props} to \ref{ssn:restriction-divalg}.
In the argument, one uses the fact that $b^* > 0$ (shown in \Cref{rmk:bstar-positive}) in order to see that $f_{S,\xi,\tau}(s)$ has a pole of positive order.

\subsection{Criteria for existence of an extension}
\label{subsn:criteria-existence}

By \Cref{thm:ext-vs-map}, the existence of an inner Galois extension $L/K$ of degree $n=dj^2$ with center $Z(L)=Z$ is equivalent to the existence of a map $\lambda:\Pm\rightarrow\Z/M\Z$ in $\Lambda$, and the existence of such an extension which is a division algebra amounts to the existence of a map $\lambda \in \Lambda'$.
The following lemma shows (when~$S$ is taken to be the empty set) that these conditions can be checked by considering only the finite set $\Pex$ of exceptional primes, and the finitely many maps $\Pex \to \Z/M\Z$:

\begin{theorem}
  \label{thm:criteria-existence}
  Let $S$ be a finite set of places of $Z$ and $\xi$ be a map $S \to \Z/M\Z$ satisfying conditions \ref{def:Lambda:complex}--\ref{def:Lambda:div} of \Cref{def:Lambda}.
  The existence of an inner Galois extension $L/K$ of degree $n=dj^2$ with center $Z(L)=Z$ and whose local invariants at places $v \in S$ are given by $\frac {\xi(v)} M$ is equivalent to the existence of a map $\lambda:\Pex\rightarrow\Z/M\Z$ coinciding with $\xi$ on $S \cap \Pex$, satisfying conditions \ref{def:Lambda:complex}--\ref{def:Lambda:div} of \Cref{def:Lambda}, and such that the following condition holds:
  \begin{enumerate}[label=\textnormal{(IV')}]
    \item
      \label{def:Lambda:sumfin}
      $
        \frac {d m} U
        \mid
        \sum_{v\in \Pex}
          \lambda(v)
      $.
  \end{enumerate}
  The existence of an extension as above which is also a division algebra is equivalent to the existence of a map $\lambda:\Pex\rightarrow\Z/M\Z$ as above which moreover satisfies:
  \begin{enumerate}[label=\textnormal{(V')}] 
    \item
      \label{def:Lambda:gcdfin}
      $
        \gcd\bigl(
          \frac {d m} U,
          \gcd_{v\in \Pex}
            \lambda(v)
        \bigr)
        =
        1
      $.
  \end{enumerate}
\end{theorem}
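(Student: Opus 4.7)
The plan is to prove both directions by restricting to $\Pex$ (forward) and extending from $\Pex$ using Chebotarev's density theorem and a Bezout argument (backward), leveraging \Cref{lem:csa-inclusion-unramified} to understand the behaviour at non-exceptional places.

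For the forward direction, starting from $\lambda \in \Lambda$ extending $\xi$, I would take $\bar\lambda \eqdef \lambda|_{\Pex}$. The conditions \ref{def:Lambda:complex}--\ref{def:Lambda:div} on $\bar\lambda$ and the compatibility with $\xi$ on $S \cap \Pex$ are inherited automatically. For \ref{def:Lambda:sumfin}, rewrite using \ref{def:Lambda:sum} as $\sum_{v \in \Pex}\bar\lambda(v) = -\sum_{v \notin \Pex}\lambda(v)$, and observe that by \Cref{lem:csa-inclusion-unramified} each summand on the right is a multiple of $\frac{dm}{\cycgcd(\Frob(v))}$, hence of $\frac{dm}{U}$ since $\cycgcd(\Frob(v)) \mid U$. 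For the skew-field version, combine $\gcd_v \lambda(v) = 1$ with the fact that $\gcd_{v\notin\Pex}\lambda(v)$ is a multiple of $\frac{dm}{U}$ to obtain \ref{def:Lambda:gcdfin}.

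For the backward direction, given $\bar\lambda:\Pex\to\Z/M\Z$ satisfying the listed conditions, I would set $\lambda=\bar\lambda$ on $\Pex$ and $\lambda=\xi$ on $S\setminus\Pex$. The ``defect'' to cancel is
\[
  s_0 \eqdef -\sum_{v\in\Pex}\bar\lambda(v)-\sum_{v\in S\setminus\Pex}\xi(v),
\]
which is a multiple of $\frac{dm}{U}$: the first summand by \ref{def:Lambda:sumfin}; the second because each $\xi(v)$ for $v\in S\setminus\Pex$ is a multiple of $\frac{dm}{\cycgcd(\Frob(v))}$ (by \Cref{lem:csa-inclusion-unramified}), hence of $\frac{dm}{U}$. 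The key identity
\[
  \gcd_{g\in G}\frac{dm}{\cycgcd(g)} \;=\; \frac{dm}{\lcm_{g\in G}\cycgcd(g)} \;=\; \frac{dm}{U}
\]
and Bezout then let me write $s_0=\sum_i k_i \frac{dm}{\cycgcd(g_i)}$ in $\Z/M\Z$ for finitely many conjugacy classes $g_i$ of $G$ and integers $k_i$. By Chebotarev's density theorem, I would pick pairwise distinct primes $v_i\in\Pm\setminus(S\cup\Pex)$ with $\Frob(v_i)$ in the class of $g_i$, set $\lambda(v_i)=k_i\frac{dm}{\cycgcd(g_i)}$, and $\lambda(v)=0$ at all remaining non-exceptional places. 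This produces an element of $\Lambda$ extending $\xi$.

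For the upgrade to $\Lambda'$ under \ref{def:Lambda:gcdfin}, I would augment the construction by throwing in balancing pairs: fix representatives $g_{j_1},\dots,g_{j_\ell}\in G$ with $\lcm_{t}\cycgcd(g_{j_t})=U$, pick by Chebotarev two distinct primes $v_t^{\pm}$ outside the already-used finite set with Frobenius in the class of $g_{j_t}$, and set $\lambda(v_t^\pm)=\pm\frac{dm}{\cycgcd(g_{j_t})}$. These pairs contribute $0$ to the total sum, so \ref{def:Lambda:sum} is preserved, but they force the gcd of the values of $\lambda$ over non-exceptional primes to equal exactly $\gcd_t \frac{dm}{\cycgcd(g_{j_t})}=\frac{dm}{U}$. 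Combined with \ref{def:Lambda:gcdfin}, this gives $\gcd_v \lambda(v)=1$, so $\lambda\in\Lambda'$.

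The main obstacle is the backward direction, and more precisely the identification of the achievable subgroup of sums over non-exceptional places with $\frac{dm}{U}\Z/M\Z$; everything hinges on the Chebotarev-plus-Bezout argument built around the identity $\gcd_g \frac{dm}{\cycgcd(g)}=\frac{dm}{U}$. The skew-field refinement is then a matter of bookkeeping: choosing enough Frobenius classes to drive the gcd over non-exceptional places down to exactly $\frac{dm}{U}$, so that \ref{def:Lambda:gcdfin} propagates from $\Pex$ to all of $\Pm$.
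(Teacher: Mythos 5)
Your proposal is correct and takes essentially the same route as the paper: restrict to $\Pex$ for the forward direction (using \Cref{lem:csa-inclusion-unramified} and the definition of $U$ to deduce (IV') and (V') from (IV) and (V)), and for the converse combine Chebotarev with the Bézout identity built around $\gcd_g \frac{dm}{\cycgcd(g)}=\frac{dm}{U}$. The only cosmetic difference is in the skew-field refinement: the paper first enlarges $S$ by a finite set $S_2$ of auxiliary primes with $\gcd_{p\in S_2}\frac{dm}{\cycgcd(\Frob(p))}=\frac{dm}{U}$ and prescribes $\xi$ there before running the extension procedure, whereas you run the extension first and then append balancing pairs $v_t^\pm$ contributing $\pm\frac{dm}{\cycgcd(g_{j_t})}$; both variants pin the gcd over non-exceptional places down to exactly $\frac{dm}{U}$ so that (V') propagates to (V).
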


\begin{proof}
  \hfill
  \begin{description}
    \item[($\Rightarrow$)]
      We first assume that there is an inner Galois extension $L/K$ as above.
      By \Cref{thm:ext-vs-map}, it corresponds to a map $\lambda:\Pm\rightarrow\Z/M\Z$ in $\Lambda$, coinciding with $\xi$ on $S$.
      Its restriction to $\Pex$ clearly still satisfies \ref{def:Lambda:complex}--\ref{def:Lambda:div} at places $v \in \Pex$, and coincides with $\xi$ on $S \cap \Pex$.

      Let $v \in \Pm \setminus \Pex$.
      By definition of $U$,
      $
        \frac {d m} U
        =
        \frac {d m} {\lcm_{g \in G} \,\cycgcd(g)}
      $
      divides
      $
        \frac{dm}{\cycgcd(\Frob(v))}
      $,
      which in turn divides~$\lambda(v)$ by \Cref{lem:csa-inclusion-unramified}.
      Hence, $\frac {d m} U$ divides $\sum_{p \in \Pm \setminus \Pex} \lambda(v)$.

      By condition \ref{def:Lambda:sum} of \Cref{def:Lambda}, we have $\sum_{v\in\Pm} \lambda(v) = 0$.
      It follows that $\frac {d m} U \mid \sum_{v \in \Pex}\lambda(v)$, so the restriction of $\lambda$ to $\Pex$ satisfies \ref{def:Lambda:sumfin}.

      If $L$ is a division algebra, then $\lambda\in\Lambda'$, so by condition \ref{def:Lambda:gcd} of \Cref{def:Lambda}, we have $\gcd_{v \in \Pm} \lambda(v) = 1$.
      Since $\frac {d m} U \mid \lambda(v)$ for $v\in\Pm\setminus\Pex$, it follows that $\gcd\bigl(\frac {d m} U, \, \gcd_{v\in \Pex}\lambda(v)\bigr) = 1$, so the restriction of $\lambda$ to $\Pex$ satisfies \ref{def:Lambda:gcdfin}.

    \item[($\Leftarrow$)]
      Conversely, assume we have a map $\lambda:\Pex\rightarrow\Z/M\Z$ as above, satisfying \ref{def:Lambda:complex}--\ref{def:Lambda:div} and \ref{def:Lambda:sumfin} and coinciding with $\xi$ on $S \cap \Pex$.
      We explain how to extend it to a map $\lambda:\Pm\rightarrow\Z/M\Z$ satisfying \ref{def:Lambda:sum} and coinciding with $\xi$ on $S$.
      First, by \v Cebotarev's density theorem, the equality $\frac {d m} U = \gcd_{g\in G} \frac{dm}{\cycgcd(g)}$ implies:
      \begin{equation}
        \label{eqn:dm/U-cebotarev}
        \frac
          {d m}
          U
        =
        \gcd_{p \not\in S \cup \Pex}
          \frac
            {dm}
            {\cycgcd(\Frob(p))}.
      \end{equation}

      By hypothesis, $\frac {d m} U$ divides $\sum_{v\in \Pex}\lambda(v)$.
      Moreover, $\frac {d m} U$ divides $\xi(v)$ for every $v \in S \setminus \Pex$ because~$\xi$ satisfies \ref{def:Lambda:div}.
      Extend $\lambda$ to a map $S \cup \Pex \to \Z/M\Z$ by putting $\lambda(v) = \xi(v)$ if $v \in S \setminus \Pex$.
      Then, $\frac {d m} U$ divides $\sum_{v\in S \cup \Pex} \lambda(v)$.
      By \Cref{eqn:dm/U-cebotarev} and Bézout's identity,
      there is a finite set $S_1$ of primes $p \notin S \cup \Pex$ and integers $\tilde\lambda(p)$ for each $p \in S_1$, such that $\tilde\lambda(p)$ is divisible by $\frac {dm} {\cycgcd(\Frob(p))}$ and such that:
      \begin{equation}
        \label{eqn:sum-lambda-zero}
        \sum_{v \in S \cup \Pex}
          \lambda(v)
        +
        \sum_{v \in S_1}
          \tilde\lambda(v)
        = 0.
      \end{equation}
      Extend $\lambda$ to $\Pm$ by letting $\lambda(p) = \tilde\lambda(p)$ if $p \in S_1$, and $\lambda(v)=0$ for places $v\notin (S \cup \Pex) \sqcup S_1$.
      We have extended $\lambda$ into a map $\lambda:\Pm\rightarrow\Z/M\Z$ coinciding with $\xi$ on $S$; by \Cref{lem:csa-inclusion-unramified} and \Cref{eqn:sum-lambda-zero}, this map satisfies conditions \ref{def:Lambda:complex}--\ref{def:Lambda:sum} and hence lies in~$\Lambda$.
      This gives us an inner Galois extension $L/K$ as needed.

      Now assume also that the original map $\lambda:\Pex\rightarrow\Z/M\Z$ satisfies~\ref{def:Lambda:gcdfin}.
      It follows from \Cref{eqn:dm/U-cebotarev} that there is a finite set $S_2$ of places $p \notin S \cup \Pex$ such that $\frac {dm} U = \gcd_{p\in S_2} \frac {dm}{\cycgcd(\Frob(p))}$.
      Add these places to $S$ and extend $\xi$ by letting $\xi(p) = \frac {dm} {\cycgcd(\Frob(p))}$ if $p \in S_2$.
      Now, use the procedure from the previous paragraph to extend $\lambda$ to a map $\Pm \to \Z/M\Z$ coinciding with~$\xi$ on $S$ and satisfying \ref{def:Lambda:complex}--\ref{def:Lambda:sum}.
      Then $\gcd_{v \in \Pm} \lambda(v)$ divides
      $
        \gcd\bigl(
          \gcd_{v \in \Pex} \lambda(v), \,
          \gcd_{v \in S_2} \lambda(v)
        \bigr)
        =
        \gcd\bigl(
          \gcd_{v \in \Pex} \lambda(v), \,
          \frac {d m} {\cycgcd(\Frob(p))}
        \bigr)
        =
        \gcd\bigl(
          \gcd_{v \in \Pex} \lambda(v), \,
          \frac {d m} U
        \bigr)
      $,
      which is $1$ by hypothesis~\ref{def:Lambda:gcdfin}.
      This shows that the map $\lambda:\Pm\rightarrow\Z/M\Z$ satisfies \ref{def:Lambda:gcd}.
      By \Cref{thm:ext-vs-map}, the corresponding extension $L/K$ is therefore a division algebra as required.
      \qedhere
  \end{description}
\end{proof}

\begin{remark}
  There are indeed situations where a map $\lambda$ like in \Cref{thm:criteria-existence} does not exist, even when $S$ is empty.
  For instance, assume that $F/Z$ is a nontrivial Galois extension and that $m$ does not divide $j$.
  Let $v$ be a prime of $Z$ completely split in~$F$, so that condition \ref{def:Lambda:div} rewrites as ``for all places~$w|v$ of $F$, $\lambda(v) = d j \kappa_w$ mod $dm$''.
  It is then possible to construct the central simple $F$-algebra $K$ of dimension~$m^2$ so that the values of $\kappa_w$ for different primes $w$ above $v$ force $\lambda(v)$ to take contradictory values modulo~$dm$.
  For example, choose $\kappa_w$ to be $0$ for some place $w|v$, and $1$ for some other place~$w|v$.
  Then, $\lambda(v)$ must be congruent to both $0$ and $dj$ modulo $dm$, which is impossible as~$m$ does not divide $j$.
\end{remark}


Note also the following corollary:

\begin{corollary}
  If there is an inner Galois extension of $K$ of degree $n$ with center $Z$ which is a division algebra, then $m$ and $j$ are coprime.
\end{corollary}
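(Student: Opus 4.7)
The plan is to invoke \Cref{thm:criteria-existence} with $S=\varnothing$ to produce a map $\lambda\colon \Pex \to \Z/M\Z$ satisfying \ref{def:Lambda:complex}--\ref{def:Lambda:div}, \ref{def:Lambda:sumfin}, and \ref{def:Lambda:gcdfin}, and to extract the coprimality of $m$ and $j$ from conditions \ref{def:Lambda:div} and \ref{def:Lambda:gcdfin} alone.

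The key observation will be that \ref{def:Lambda:div} forces $\gcd(j,m)$ to divide each $\lambda(v)$ (for $v\in \Pex$) in $\Z/M\Z$. I would obtain this by summing the congruences $d_w\lambda(v) \equiv dj\kappa_w \pmod{dm}$ over all $w\mid v$, using $\sum_{w\mid v}d_w=d$; this yields $\lambda(v)\equiv j\sum_{w\mid v}\kappa_w\pmod{m}$, and since both sides of this congruence are integer multiples of $\gcd(j,m)$, so is $\lambda(v)$. Because $\gcd(j,m)$ also divides $m$, which divides $M$, this passes to divisibility in $\Z/M\Z$ in the principal-ideal sense of \Cref{sssn:terminology}. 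Taking gcds over $v\in \Pex$ then gives $\gcd(j,m)\mid\gcd_{v\in \Pex}\lambda(v)$.

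To close the argument, I would observe that $U\mid d$: each $\cycgcd(g)$ is the gcd of cycle lengths that sum to $d$, hence divides $d$, and $U$, being an lcm of divisors of $d$, also divides $d$. Therefore $m$ divides $\frac{dm}{U}$, and so does $\gcd(j,m)$. Hypothesis \ref{def:Lambda:gcdfin} then asserts $\gcd\!\bigl(\frac{dm}{U},\gcd_{v\in\Pex}\lambda(v)\bigr)=1$, which forces $\gcd(j,m)=1$.

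The only delicate point is translating integer-level divisibilities into divisibilities in $\Z/M\Z$; the rest is routine arithmetic. As a sanity check, I would keep in mind an alternative structural argument that bypasses \Cref{thm:criteria-existence}: the centralizer $R=\Cent_L(K)$ is a central simple $F$-algebra of dimension $j^2$ inside the skew field $L$, hence itself a skew field of index $j$; similarly $K$ is a skew field of index $m$; the embedding $K\otimes_F R\hookrightarrow L$ exhibits $K\otimes_F R$ as a skew field of index $mj$; and the equality $\ind([K]+[R])=mj=\ind([K])\cdot\ind([R])$ in $\Br(F)$ forces coprimality via a primary-part decomposition, since within a $p$-primary component of $\Br(F)$ the index of a sum divides the maximum of the indices rather than their product.
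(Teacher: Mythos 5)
Your main argument is essentially the paper's: both proofs invoke \Cref{thm:criteria-existence} to obtain a map $\lambda\colon\Pex\to\Z/M\Z$ satisfying \ref{def:Lambda:complex}--\ref{def:Lambda:div} and \ref{def:Lambda:gcdfin}, deduce from \ref{def:Lambda:div} (by summing over $w\mid v$ and using $\sum_{w\mid v}d_w=d$) that $\gcd(m,j)\mid\lambda(v)$ for all $v\in\Pex$, and then observe $\gcd(m,j)\mid m\mid\frac{dm}{U}$ so that \ref{def:Lambda:gcdfin} forces $\gcd(m,j)=1$. Your phrasing ``both sides of this congruence are integer multiples of $\gcd(j,m)$, so is $\lambda(v)$'' is slightly garbled since $\lambda(v)$ \emph{is} the left side, but the underlying reasoning (right side divisible by $\gcd(j,m)$, modulus $m$ divisible by $\gcd(j,m)$, hence $\lambda(v)$ divisible) is correct.

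The ``alternative structural argument'' you sketch at the end is, however, a genuinely different and more conceptual route that bypasses \Cref{thm:criteria-existence} and the local analysis entirely. If $L$ is a skew field then so is any finite-dimensional simple subalgebra, in particular $K$, $R=\Cent_L(K)$, and the image of the multiplication map $K\otimes_F R\to L$ (which is injective since $K\otimes_F R$ is simple by \cite[\href{https://stacks.math.columbia.edu/tag/074F}{Lemma~074F}]{stacks-project}). Thus $K\otimes_F R$ is a division algebra of dimension $(mj)^2$ over its center $F$, so $\ind\big([K]+[R]\big)=mj$. Over a number field, index equals exponent, and the exponent of a sum divides the least common multiple of the exponents, giving $mj\mid\lcm(m,j)$, i.e.\ $\gcd(m,j)=1$. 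This is cleaner than the paper's route and would make a nice alternative proof; its only cost is that it relies on the index-equals-exponent property rather than being purely formal, whereas the paper's argument is a straightforward byproduct of the machinery it has already set up for \Cref{thm:criteria-existence}.
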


\begin{proof}
  By \Cref{thm:criteria-existence}, the hypothesis implies the existence of a map $\lambda : \Pex \to \Z/M\Z$ satisfying conditions \ref{def:Lambda:complex}--\ref{def:Lambda:div} of \Cref{def:Lambda} as well as condition~\ref{def:Lambda:gcdfin}.
  The integer $d\gcd(m,j)$ divides both~$dm$ and $dj$.
  Since $\lambda$ satisfies condition \ref{def:Lambda:div} of \Cref{def:Lambda}, this implies that $d\gcd(m,j)$ divides~$d_w \lambda(v)$ for all places $w$ of $F$ lying above a place $v \in \Pex$.
  Summing over all places~$w$ above a fixed place $v \in \Pex$, we get $d\gcd(m,j) \mid d\lambda(v)$, i.e., $\gcd(m,j) \mid \lambda(v)$.
  This holds for all places $v \in \Pex$, and thus $\gcd(m,j)$ divides $\gcd_{v \in \Pex} \lambda(v)$.
  On the other hand, $\gcd(m,j) \mid m \mid \frac {d m} U$.
  Hence, $\gcd(m,j)$ divides
  $
    \gcd\Bigl(
      \frac {d m} U, \,
      \gcd_{v \in \Pex} \lambda(v)
    \Bigr)
  $, which equals $1$ by condition~\ref{def:Lambda:gcdfin}.
  We conclude that $\gcd(m,j) = 1$ as claimed.
\end{proof}

\section{Outer extensions}
\label{sn:outer}

In this section, we study the distribution of outer extensions $L$ of a given (finite-dimensional) simple $\Q$-algebra $K$.
We also discuss more specific questions, considering the case of outer Galois extensions with a given finite Galois group or restricting our attention to division algebras.
In all cases, our main results relate these problems to questions concerning the distribution of ordinary commutative field extensions.
Those questions are open in most cases.

In \Cref{subsn:prelim-outer}, we prove general facts concerning outer extensions, notably \Cref{thm:charac-outer} which states that every outer extension of $K$ is the tensor product of $K$ with a field extension of its center~$Z(K)$.
In \Cref{subsn:tensor-products}, we give criteria for such a tensor product to be a division algebra and express $d(L/K)$ in terms of the relative discriminant of the field extension $Z(L)/Z(K)$.
Finally, in \Cref{subsn:counting-outer}, we use these descriptions to relate the problem of counting outer extensions of $K$ with that of counting field extensions of $Z(K)$.

\subsection{General facts about outer extensions}
\label{subsn:prelim-outer}


\subsubsection{Groups of inner automorphisms are either trivial or infinite.}

We first prove \Cref{prop:quotient-of-units-not-finite} and \Cref{cor:outer-if-aut-finite}, which imply that every extension whose automorphism group is finite is an outer extension, assuming that its center is infinite.
For division algebras, this is well-known (see \cite[Théorème]{deschamps-petites}).
\Cref{prop:quotient-of-units-not-finite} is also used in our proof of the characterization \Cref{thm:charac-outer}.

\begin{proposition}
  \label{prop:quotient-of-units-not-finite}
  Let $A$ be a finite-dimensional algebra over an infinite field $F$.
  If the group $A^\times/F^\times$ is finite, then $A = F$.
\end{proposition}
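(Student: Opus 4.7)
The plan is to show that every $a\in A$ lies in $F$, by combining the pigeonhole principle (applied to the finite group $A^\times/F^\times$) with the fact that infinitely many additive translates of $a$ by scalars are invertible (thanks to $F$ being infinite).

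More precisely, fix any $a\in A$. First I would invoke \Cref{lem:non-eigenvalue}: viewing $a$ as a matrix under an embedding $A\hookrightarrow\mathfrak{M}_n(F)$ for suitable $n$, the characteristic polynomial has only finitely many roots in $F$, so $a-\lambda\in A^\times$ for all but finitely many $\lambda\in F$. Since $F$ is infinite, the set $\Sigma\eqdef\{\lambda\in F\mid a-\lambda\in A^\times\}$ is infinite.

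Next, consider the map $\Sigma\to A^\times/F^\times$ sending $\lambda$ to the class of $a-\lambda$. Since $A^\times/F^\times$ is finite by hypothesis and $\Sigma$ is infinite, there exist distinct $\lambda_1,\lambda_2\in\Sigma$ with the same image. This means there is a $\mu\in F^\times$ such that $a-\lambda_1=\mu(a-\lambda_2)$, i.e., $(1-\mu)a=\lambda_1-\mu\lambda_2$. If $\mu=1$, then $\lambda_1=\lambda_2$, a contradiction; hence $\mu\neq 1$ and
\[
a=\frac{\lambda_1-\mu\lambda_2}{1-\mu}\in F.
\]
Since $a\in A$ was arbitrary, this gives $A=F$.

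I do not expect any serious obstacle: the only subtle ingredient is ensuring that enough translates $a-\lambda$ are invertible to feed into the pigeonhole, which is exactly what \Cref{lem:non-eigenvalue} provides. Note that the argument also goes through for non-invertible $a$ (we never need $a$ itself to lie in $A^\times$; we only need infinitely many translates $a-\lambda$ to be invertible), so the proof handles all elements of $A$ uniformly.
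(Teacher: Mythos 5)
Your proof is correct, and it takes a genuinely different route from the paper's. The paper writes $A^\times$ as a disjoint union of finitely many cosets $a_1F^\times,\dots,a_rF^\times$, covers $A$ by the algebraic hypersurface $A\setminus A^\times=\{\mathrm{Nm}_{A|F}=0\}$ together with the lines $a_1F,\dots,a_rF$, and then invokes the standard fact that a finite-dimensional vector space over an infinite field cannot be covered by finitely many proper algebraic subsets; if $A\neq F$ all these pieces are proper, contradiction. Your argument instead fixes a single $a\in A$, produces an infinite set of scalars $\lambda$ with $a-\lambda\in A^\times$ (using the same polynomial-nonvanishing idea as \Cref{lem:non-eigenvalue}, applied to the norm rather than stated only for one $\lambda$), and applies the pigeonhole principle to the finite quotient $A^\times/F^\times$ to find $\lambda_1\neq\lambda_2$ and $\mu\in F^\times$ with $a-\lambda_1=\mu(a-\lambda_2)$; solving for $a$ then forces $a\in F$. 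Both proofs are short and rest on the same essential input (the norm is a nonzero polynomial function on $A$, and $F$ is infinite), but yours avoids the covering lemma entirely and is completely elementary, at the modest cost of having to note explicitly that the statement of \Cref{lem:non-eigenvalue} must be upgraded to ``all but finitely many $\lambda$'' (which is exactly what its proof shows). One small point to make explicit if you write this up: after embedding $A\hookrightarrow\mathfrak{M}_n(F)$, invertibility of $a-\lambda$ as a matrix gives invertibility in $A$ because a non-zero-divisor in a finite-dimensional $F$-algebra is a unit; this is implicit in the cited lemma but worth a word.
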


\begin{proof}
  Write $A^\times = a_1 F^\times \sqcup \cdots \sqcup a_r F^\times$.
  Then, $A$ is covered by the subsets $A\setminus A^\times$ and $a_1 F, \dots, a_r F$.
  Each of these is an algebraic subset of the $F$-vector space~$A$:
  The set $A\setminus A^\times$ is defined by the equation $\textnormal{Nm}_{A/F}(x)=0$, which is polynomial in the coordinates of $x$, and the sets $a_1 F,\dots,a_r F$ are linear subspaces.
  If $A\neq F$, then the $F$-vector space $A$ is at least two-dimensional, so $A\setminus A^\times,a_1 F,\dots,a_r F$ are properly contained in $A$.
  However, it is well-known that a finite-dimensional vector space over an infinite field cannot be covered by finitely many algebraic proper subsets. (See \cite[p.\ 228]{cohn-basic-algebra}.)
\end{proof}


\begin{corollary}
  \label{cor:outer-if-aut-finite}
  Let $L/K$ be an extension of simple algebras.
  Assume that the inner automorphism group of $L/K$ is finite and that the field $Z(L)$ is infinite.
  Then, $L/K$ is an outer extension.
\end{corollary}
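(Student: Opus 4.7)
The plan is to combine the isomorphism $\Inn(L|K) \simeq \Cent_L(K)^\times/Z(L)^\times$ from Subsection~\ref{sssn:terminology} with the algebraic observation in Proposition~\ref{prop:quotient-of-units-not-finite}. Setting $A \eqdef \Cent_L(K)$, the goal reduces to showing that under the hypotheses, $A = Z(L)$, which forces $\Inn(L|K)$ to be trivial.

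First I would verify that $A$ is a finite-dimensional algebra over the infinite field $F \eqdef Z(L)$. The inclusion $Z(L) \subseteq A$ is immediate since the center of $L$ commutes with all elements of $L$, in particular those of $K$. Moreover, $L$ is a finite-dimensional $\Q$-algebra and $Z(L)$ is a finite field extension of $\Q$, so $L$ is finite-dimensional over $Z(L)$, and hence so is the subalgebra $A$.

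Next, the finiteness of $\Inn(L|K)$ together with the isomorphism $\Inn(L|K) \simeq A^\times/F^\times$ gives that $A^\times/F^\times$ is finite. Proposition~\ref{prop:quotient-of-units-not-finite}, applied to the finite-dimensional $F$-algebra $A$ over the infinite field $F = Z(L)$, then forces $A = F$, i.e.\ $\Cent_L(K) = Z(L)$. Consequently $\Inn(L|K) \simeq Z(L)^\times/Z(L)^\times$ is trivial, so every automorphism of $L|K$ that is inner is the identity, which is exactly what it means for $L|K$ to be outer.

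There is no real obstacle here: the only subtlety worth mentioning in the write-up is justifying the $F$-algebra structure on $A$ (via $Z(L) \subseteq \Cent_L(K)$) and recalling that $Z(L)$, as the center of a simple $\Q$-algebra, is a number field and hence infinite. Everything else is a direct invocation of results already established in the excerpt.
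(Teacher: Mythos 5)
Your proof is correct and follows the paper's own argument exactly: reduce to Proposition~\ref{prop:quotient-of-units-not-finite} via the isomorphism $\Inn(L|K)\simeq\Cent_L(K)^\times/Z(L)^\times$ to force $\Cent_L(K)=Z(L)$ and hence triviality of $\Inn(L|K)$. You supply slightly more detail (checking that $\Cent_L(K)$ is indeed a finite-dimensional $Z(L)$-algebra), but the route is the same.
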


\begin{proof}
  By \Cref{prop:quotient-of-units-not-finite}, the finiteness of $\Inn(L/K) \simeq \Cent_L(K)^\times/Z(L)^\times$ implies that $\Cent_L(K) = Z(L)$ and thus $\Inn(L/K) = 1$.
  Therefore, the extension $L/K$ is outer.
\end{proof}

(The assumption that $Z(L)$ be infinite is crucial, as the example $L=\mathfrak M_d(\mathbb F_q)$, $K=\mathbb F_q$ shows.)


\subsubsection{The Deschamps-Legrand descent theorem.}
\label{sssn:descent}

We state and prove a generalized version of a theorem of Deschamps and Legrand \cite[Corollaire~2]{desleg}, which is an ``outer equivalent'' of \Cref{cor:descent-inner}.
The original result does not deal with the non-Galois case, and only treats the case of division algebras.
This theorem gives a concrete description of extensions of $K$, allowing to parametrize them in terms of field extensions of $Z(K)$ (see \Cref{cor:outer-is-like-commutative}).

\begin{theorem}
  \label{thm:charac-outer}
  Let $L/K$ be an extension of simple $\Q$-algebras.
  The following are equivalent:
  \begin{enumerate}[label=(\roman*)]
    \item
      \label{thm:charac-outer-i}
      $L/K$ is outer;
    \item
      \label{thm:charac-outer-ii}
      $\Cent_L(K) = Z(L)$;
    \item
      \label{thm:charac-outer-iv}
      $Z(K)$ is contained in $Z(L)$, and $L$ is generated by $K$ and $Z(L)$;
    \item
      \label{thm:charac-outer-v}
      $Z(K)$ is contained in $Z(L)$, and $L$ is isomorphic to the tensor product $Z(L) \otimes_{Z(K)} K$ as an extension of $K$;
    \item
      \label{thm:charac-outer-vi}
      $Z(K)$ is contained in $Z(L)$, and the restriction map $\Aut(L/K) \to \Aut(Z(L)/Z(K))$ is bijective.
  \end{enumerate}
\end{theorem}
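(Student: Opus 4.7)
The plan is to prove the chain of implications (i) $\Rightarrow$ (ii) $\Rightarrow$ (v) $\Rightarrow$ (vi) $\Rightarrow$ (i), together with the equivalence (iv) $\Leftrightarrow$ (v), which yields the equivalence of all five conditions.

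For (i) $\Leftrightarrow$ (ii), recall from \Cref{sssn:terminology} that $\Inn(L|K) \simeq \Cent_L(K)^\times/Z(L)^\times$. If $L|K$ is outer, this quotient is trivial; applying \Cref{prop:quotient-of-units-not-finite} to the finite-dimensional $Z(L)$-algebra $\Cent_L(K)$ (noting that $Z(L)$ is infinite as a number field) forces $\Cent_L(K) = Z(L)$. The converse is immediate. For (ii) $\Rightarrow$ (v), first observe that $Z(K) \subseteq \Cent_L(K) = Z(L)$, so one can form $A \eqdef Z(L) \otimes_{Z(K)} K$, which is a central simple $Z(L)$-algebra. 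The multiplication map $A \to L$ is nonzero and $Z(L)$-linear, hence injective by simplicity of $A$; its image $K'$ is the $\Q$-subalgebra of $L$ generated by $K$ and $Z(L)$, and its centralizer is $\Cent_L(K') = \Cent_L(K) \cap \Cent_L(Z(L)) = Z(L) \cap L = Z(L)$. Applying the double-centralizer theorem in the central simple $Z(L)$-algebra $L$ to the simple $Z(L)$-subalgebra $K'$ yields $\dim_{Z(L)}(K')\cdot\dim_{Z(L)}(\Cent_L(K')) = \dim_{Z(L)}(L)$, hence $K' = L$ and $L \simeq A$ as extensions of $K$.

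The equivalence (iv) $\Leftrightarrow$ (v) is now immediate: (v) $\Rightarrow$ (iv) is obvious, and for (iv) $\Rightarrow$ (v) the multiplication map $A \to L$ is injective by simplicity and surjective by the generation hypothesis. For (v) $\Rightarrow$ (vi), the assignment $\sigma \mapsto \sigma \otimes \id_K$ provides an inverse to the restriction map $\phi \mapsto \phi|_{Z(L)}$, which is well-defined since any $\Q$-algebra automorphism preserves the center. Finally, for (vi) $\Rightarrow$ (i), an inner automorphism $x \mapsto cxc^{-1}$ with $c \in \Cent_L(K)^\times$ acts trivially on $Z(L)$ (as $c$ commutes with everything central), so the hypothesized injectivity of the restriction map forces $\Inn(L|K) = 1$.

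The main obstacle is the step (ii) $\Rightarrow$ (v): one must simultaneously recognize that $K \cdot Z(L)$ is isomorphic to $Z(L) \otimes_{Z(K)} K$ (using that the latter is simple) and then verify that the hypothesis of the double-centralizer theorem is met so as to conclude $K \cdot Z(L) = L$. The other implications are either formal manipulations in the tensor product or immediate from the preliminary lemmas already established in the excerpt.
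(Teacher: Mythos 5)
Your proof is correct and follows essentially the same strategy as the paper's, with a minor reorganization of the implications. The paper goes (i)$\Rightarrow$(ii)$\Rightarrow$(iv)$\Rightarrow$(v)$\Rightarrow$(vi)$\Rightarrow$(i), while you prove (ii)$\Rightarrow$(v) directly and add (iv)$\Leftrightarrow$(v) as a side equivalence; in the key step you invoke the double-centralizer theorem through its dimension formula $\dim(K')\cdot\dim(\Cent_L(K'))=\dim(L)$, whereas the paper uses the form $\Cent_L(\Cent_L(L^\Delta))=L^\Delta$ — both are standard consequences of the same theorem and yield the same conclusion $K\cdot Z(L)=L$. One point stated as ``obvious'' (namely (v)$\Rightarrow$(iv)) does silently use that the given isomorphism $Z(L)\otimes_{Z(K)}K\simto L$ sends $Z(L)\otimes 1$ onto $Z(L)$, which holds because algebra isomorphisms preserve centers and $Z(L)\otimes_{Z(K)}K$ is central simple over $Z(L)$; this is worth spelling out but is not a gap.
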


\begin{proof}
  ~
  \begin{itemize}
    \item[
      \ref{thm:charac-outer-i}
      $\Rightarrow$
      \ref{thm:charac-outer-ii}
    ]
      Since $L/K$ is outer, the group $\Inn(L/K) \simeq \Cent_L(K)^{\times}/Z(L)^{\times}$ is trivial and thus $\Cent_L(K)^{\times} = Z(L)^{\times}$.
      Now, \Cref{prop:quotient-of-units-not-finite} directly implies that $\Cent_L(K) = Z(L)$.

    \item[
      \ref{thm:charac-outer-ii}
      $\Rightarrow$
      \ref{thm:charac-outer-iv}
    ]
      We have
      $
        Z(K)
        =
        K \cap \Cent_L(K)
        \overset{\text{\ref{thm:charac-outer-ii}}}=
        K \cap Z(L)
      $, so $Z(K)$ is contained in $Z(L)$.
      Let~$L^{\Delta}$ be the subalgebra of $L$ generated jointly by $Z(L)$ and $K$.
      We have:
      \begin{align*}
        L^{\Delta}
        & =
        \Cent_L\Big(
          \Cent_L\big(
            L^{\Delta}
          \big)
        \Big)
        & \text{by \cite[\href{https://stacks.math.columbia.edu/tag/074T}{Theorem~074T~(3)}]{stacks-project}}
        \\
        & =
        \Cent_L\Big(
          \Cent_L\big(
            Z(L)
          \big)
          \cap
          \Cent_L(K)
        \Big)
        & \text{because $L^{\Delta}$ is generated by $Z(L)$ and $K$}
        \\
        & =
        \Cent_L\Big(
          L \cap \Cent_L(K)
        \Big)
        \\
        & =
        \Cent_L(Z(L))
        &
        \text{by \ref{thm:charac-outer-ii}}
        \\
        & =
        L.
      \end{align*}
      Therefore, $L$ is generated jointly by $Z(L)$ and $K$.
    \item[
      \ref{thm:charac-outer-iv}
      $\Rightarrow$
      \ref{thm:charac-outer-v}
    ]
      By \ref{thm:charac-outer-iv}, the algebras $Z(L)$ and $K$ generate $L$, and thus the tensor product $Z(L) \otimes_{Z(K)} K$ surjects onto~$L$ via $x\otimes y\mapsto xy$.
      Since $L \neq 1$ and the algebra $Z(L) \otimes_{Z(K)} K$ is simple by \cite[\href{https://stacks.math.columbia.edu/tag/074F}{Lemma~074F}]{stacks-project}, this surjection is an isomorphism.
      Therefore, $L \simeq Z(L) \otimes_{Z(K)} K$ as claimed.

    \item[
      \ref{thm:charac-outer-v}
      $\Rightarrow$
      \ref{thm:charac-outer-vi}
    ]
      The restriction map is well-defined because automorphisms of $L$ preserve the center.
      By functoriality of the tensor product, we obtain a map $\Aut(Z(L)/Z(K)) \to \Aut(Z(L) \otimes_{Z(K)} K/K) \overset{\ref{thm:charac-outer-v}}{\simeq} \Aut(L/K)$.
      That map is an inverse of the restriction map, proving its bijectivity.

    \item[
      \ref{thm:charac-outer-vi}
      $\Rightarrow$
      \ref{thm:charac-outer-i}
    ]
      By \ref{thm:charac-outer-vi}, automorphisms of $L/K$ are determined by their restriction to $Z(L)$.
      However, inner automorphisms of $L$ act trivially on $Z(L)$.
      Therefore, the extension $L/K$ has no nontrivial inner automorphisms and thus is outer.
      \qedhere
  \end{itemize}
\end{proof}

We rephrase \Cref{thm:charac-outer} to emphasize its importance for parametrizing outer extensions of~$K$:

\begin{corollary}
  \label{cor:outer-is-like-commutative}
  Let $F$ be a number field and let $K$ be a central simple $F$-algebra.

  {
    \raggedright
    \setlength{\tabcolsep}{0em}
    \begin{tabular}{>{\raggedleft}p{.5\textwidth}>{\centering}p{.10\textwidth}>{\raggedright\arraybackslash}p{.4\textwidth}}
      \multicolumn{3}{l}{
        There is a bijective correspondence:
      }
      \\
      $
        \left\{
          \text{outer extensions } L/K
        \right\}/\cong
      $
      &
      $\overset\sim\longleftrightarrow$
      &
      $
        \left\{
          \text{field extensions } F'/F
        \right\}/\cong
      $
      \\
      \multicolumn{3}{l}{
        given by:
      }
      \\
      $
        L
      $
      &
      $\mapsto$
      &
      $
        Z(L)
      $
      \\
      $
        F' \otimes_F K
      $
      &
      $\otspam$
      &
      $
        F'
      $
      \\
      \multicolumn{3}{l}{
        Moreover, via this correspondence:
      }
      \\
      $
        [L:K]
      $
      &
        $=$
      &
      $
        [F':F]
      $
      \\
      $
        \Aut(L/K)
      $
      &
      $\simeq$
      &
      $
        \Aut(F'/F)
      $
      \\
      $L/K$ is Galois
      &
      $\Leftrightarrow$
      &
      $F'/F$ is Galois.
    \end{tabular}
  }
\end{corollary}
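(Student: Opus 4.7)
The statement is almost a reformulation of \Cref{thm:charac-outer}, and my plan is simply to verify that each required map is well-defined and that the claimed structure is preserved. I would organize the proof in three short steps.

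First, I would check that both maps are well-defined. For $L\mapsto Z(L)$: since $L$ is a simple $\Q$-algebra, $Z(L)$ is a field, and \iref{thm:charac-outer}{thm:charac-outer-iv} applied to the outer extension $L|K$ gives $F=Z(K)\subseteq Z(L)$, so $Z(L)$ is indeed a field extension of $F$. For $F'\mapsto F'\otimes_F K$: the algebra $F'\otimes_F K$ is simple by \cite[\href{https://stacks.math.columbia.edu/tag/074F}{Lemma~074F}]{stacks-project} (since $K$ is central simple over $F$), and the natural map $k\mapsto 1\otimes k$ embeds $K$ into it. Its center is $F'\otimes_F Z(K) = F'\otimes_F F = F'$, which contains $F=Z(K)$; moreover, $F'\otimes_F K$ is visibly generated by $F'$ and $K$. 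By \iref{thm:charac-outer}{thm:charac-outer-iv}$\Rightarrow$\iref{thm:charac-outer}{thm:charac-outer-i}, this extension is outer.

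Second, I would check that the two maps are mutually inverse. Starting from an outer extension $L|K$, \iref{thm:charac-outer}{thm:charac-outer-v} directly asserts $L \simeq Z(L)\otimes_{Z(K)}K$ as extensions of $K$, which is exactly the composition in one direction. For the other direction, starting from $F'|F$, we already observed that the center of $F'\otimes_F K$ is $F'$, so $Z(F'\otimes_F K) = F'$ up to the canonical identification.

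Third, I would verify the three additional items. For degrees: $\dim_\Q(F'\otimes_F K) = [F':F]\cdot \dim_\Q(K)$, hence $[L:K] = [F':F]$. For automorphism groups: the isomorphism $\Aut(L|K)\simeq\Aut(Z(L)|Z(K))$ is given by restriction and is precisely \iref{thm:charac-outer}{thm:charac-outer-vi}. Finally, for the Galois condition, I would use the description $L = F'\otimes_F K$ together with the bijection of \iref{thm:charac-outer}{thm:charac-outer-vi}: an element $\sigma\in\Aut(L|K)$ acts as $\sigma|_{F'}\otimes\id_K$ on $F'\otimes_F K$, so
\[
  L^{\Aut(L|K)}
  =
  (F')^{\Aut(F'|F)}\otimes_F K.
\]
Since $\otimes_F K$ is faithfully flat, this equals $K$ if and only if $(F')^{\Aut(F'|F)}=F$, i.e., if and only if $F'|F$ is Galois.

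The argument is genuinely a corollary, so there is no deep obstacle; the only point requiring a small amount of care is the Galois equivalence, where one must justify that the Galois action on $L = F'\otimes_F K$ really is trivial on the $K$-factor (which follows from \iref{thm:charac-outer}{thm:charac-outer-vi} and the functoriality used in the proof of \ref{thm:charac-outer-v}$\Rightarrow$\ref{thm:charac-outer-vi}) and that fixed points of the diagonal action commute with taking tensor products with $K$ over $F$.
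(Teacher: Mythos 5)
Your proposal is correct and follows essentially the same route as the paper's proof: everything except the Galois equivalence is dismissed as an immediate consequence of \Cref{thm:charac-outer}, and for the Galois equivalence the paper also uses the identity $L^{\Aut(L|K)} = Z(L)^{\Aut(Z(L)|Z(K))} \otimes_{Z(K)} K$ (obtained from \ref{thm:charac-outer-v} and \ref{thm:charac-outer-vi}) and concludes by the same tensor-product argument. You spell out the well-definedness and mutual-inverse checks more explicitly, and you invoke faithful flatness where the paper leaves the final "equals $K$ iff \dots" step implicit, but there is no material difference.
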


\begin{proof}
  Everything follows directly from \Cref{thm:charac-outer} apart from the last equivalence.
  By \Cref{thm:charac-outer} [\ref{thm:charac-outer-i} $\Rightarrow$ \ref{thm:charac-outer-v}, \ref{thm:charac-outer-vi}], we have:
  \[
    L^{\Aut(L/K)} = (Z(L) \otimes_{Z(K)} K)^{\Aut(Z(L)/Z(K))}
  \]
  where $\Aut(Z(L)/Z(K))$ only acts on the factor $Z(L)$, so that:
  \begin{equation}
    \label{eqn:fixedfield-outer}
    L^{\Aut(L/K)} = Z(L)^{\Aut(Z(L)/Z(K))} \otimes_{Z(K)} K.
  \end{equation}
  Thus, $L^{\Aut(L/K)} = K$ is equivalent to $Z(L)^{\Aut(Z(L)/Z(K))}=Z(K)$.
\end{proof}

\begin{remark}
  \Cref{cor:outer-is-like-commutative} implies a ``noncommutative Hilbert 90 theorem'' like in \cite[Proposition~26.2]{deschamps}.
  Indeed, consider an outer Galois extension $L/K$ of simple algebras.
  Combining \Cref{cor:outer-is-like-commutative} with the result of \cite[Chap.~X, §10, Exercise~2]{serrecl}, we obtain:
  \[
    H^1\Big(
      \Gal(L/K),
      L^{\times}
    \Big)
    =
    H^1\Big(
      \Gal\big(
        Z(L)/Z(K)
      \big),
      \big(
        K \otimes_{Z(K)} Z(L)
      \big)^{\times}
    \Big)
    = 1.
  \]
\end{remark}

\subsection{Computing relative discriminants of outer extensions}
\label{subsn:tensor-products}

Let $F$ be a number field and $K$ be a central division $F$-algebra of dimension $m^2$.
If $v$ is a place of~$F$, let~$\kappa_v$ be the element of $\Z/m\Z$ such that the local invariant of $K$ at $v$ is given by $\frac {\kappa_v} m$.
We denote by~$S$ the finite set of places $v$ of $F$ for which $\kappa_v \neq 0$.

In this section, we take a closer look at tensor products of $K$ with field extensions $F'/F$, which by \Cref{cor:outer-is-like-commutative} are all the outer extensions of $K$.
The main result is \Cref{thm:tensor-disc-divalg}, which relates the ``generalized relative discriminant'' $d(L/K)$ when $L = K \otimes_F F'$ to the relative discriminant $d(F'/F)$, and characterizes situations where $L$ is a division algebra.

Let $d \in \N$.
We introduce the set $\mathcal{E}_d$ of tuples $(E(v))_{v \in S}$ where $E(v)$ is an étale $F_v$-algebra of dimension $d$ for all $v \in S$.
This set $\mathcal{E}_d$ is finite.%
\footnote{
  Here, complications arise if the base field is a function field over a finite field instead of a number field, as its completions may have infinitely many extensions of a given degree and thus the naive analogue of $\mathcal{E}_d$ is not always finite.
  Then, when taking cardinalities in \Cref{eqn:bijec-ed}, the sum on the right is not finite.
}
To each field extension $F'/F$ of degree $d$ corresponds a tuple $(F' \otimes_F F_v)_{v \in S} \in \mathcal{E}_d$.
For $E \in \mathcal{E}_d$, we define:
\begin{equation}
  \label{defn:deltaE}
  \delta(E)
  \coloneqq
  \prod_{\substack{
    p \in S \\
    \text{ prime of }F
  }}
    \Nm{p}^{\delta_p(E)}
  \end{equation}
where
\begin{equation}
  \label{defn:deltapE}
  \delta_p(E)
  \coloneqq
  m d (m - \gcd(m, \, \kappa_p))
  -
  m
  \sum_{\substack{
    \text{field } E' \\
    \text{factor of } E(p)
  }}
    f(E'/F_v)
    (
      m - \gcd(m, \, [E':F_v] \kappa_p)
    )
  .
\end{equation}

We also define the following subset of $\mathcal{E}_d$:
\[
  \mathcal{E}'_d
  \coloneqq
  \left\{
    (E(v))_{v \in S} \in \mathcal{E}_d
    \verti
    \text{
      the elements
      $
        \Big(
          [E':F_v] \kappa_v
        \Big)_{\begin{lsubstack}
          v \in S\\
          E' \text{ factor of } E(v)
        \end{lsubstack}}
    $
    generate $\Z/m\Z$}
  \right\}.
\]

\begin{theorem}
  \label{thm:tensor-disc-divalg}
  Let $F'/F$ be a field extension of degree $d$ and let $L/K$ be an outer extension, associated to each other via the bijection of \Cref{cor:outer-is-like-commutative} (i.e., $F'=Z(L)$ and $L = F' \otimes_F K$).
  Let $E = (F' \otimes_F F_v)_{v \in S} \in \mathcal{E}_d$.
  Then:
  \begin{enumerate}[label=(\roman*)]
    \item
    The number $d(L/K)$ introduced in \Cref{def:discriminant} satisfies $d(L/K) = \delta(E)^{-1} \cdot d(F'/F)^{m^2}$;
    \item
      $L$ is a division algebra if and only if $E \in \mathcal{E}'_d$.
  \end{enumerate}
\end{theorem}


\begin{proof}
  By \cite[(31.9)]{reiner}, the local invariant of $L$ at a place $w$ of $F'$, lying above a place $v$ of $F$, is given by:
  \begin{equation}
    \label{eqn:localinv-tensor}
    \inv{L_w}
    =
    \inv{
      (K \otimes_F F')_w
    }
    =
    [F'_w:F_v]
    \cdot
    \inv{K_v}
    =
    \frac{
      [F'_w:F_v]\kappa_v
    }
    m.
  \end{equation}
  \begin{enumerate}[label=(\roman*)]
    \item
      We have:
      \begin{align*}
        d(L/F')
        &
        =
        \prod_{p \text{ prime of }F}
          \,
          \prod_{q|p \text{ prime of } F'}
            \,
            \Nm{q}^{
              m\Big(
                m
                -
                \gcd(m, \, [F'_q:F_p] \kappa_p)
              \Big)
            }
        &
        \text{by \Cref{eqn:discr-csa,eqn:localinv-tensor}}
        \\
        &
        =
        \prod_{p \text{ prime of }F}
          \,
          \prod_{q|p \text{ prime of } F'}
            \,
            \Nm{p}^{
              mf(q|p)\Big(
                m
                -
                \gcd(m, \, [F'_q:F_p] \kappa_p)
              \Big)
            }
        \\
        &
        =
        \prod_{\substack{p \in S\\ \text{ prime of }F}}
          \Nm{p}^{
            m
            \sum_{q|p}
              f(q|p)\Big(
                m
                -
                \gcd(m, \, [F'_q:F_p] \kappa_p)
              \Big)
          }
        \\
        &
        =
        \prod_{\substack{p \in S\\ \text{ prime of }F}}
          \Nm{p}^{
            md\Big(m-\gcd(m, \kappa_p)\Big) - \delta_p(E)
          }
        &
        \text{by \Cref{defn:deltapE}}
        \\
        &
        =
        d(K/F)^d
        \cdot
        \delta(E)^{-1}
        &
        \text{by \Cref{eqn:discr-csa,defn:deltaE}}.
      \end{align*}
      Finally, using \Cref{prop:reldisc}:
      \[
        d(L/K)
        =
        \frac{d(L/F') \cdot d(F'/F)^{m^2}}{d(K/F)^d}
        = \delta(E)^{-1} \cdot d(F'/F)^{m^2}.
      \]
    \item
      By definition, $L$ is a division algebra if and only if its index is $m$.
      Since index and exponent coincide for central simple algebras over number fields, this is equivalent to the condition that the invariants $\inv{L_w} = \frac{[F_w':F_v]\kappa_v}{m}$ have least common denominator $m$, which amounts to the numerators $[F_w':F_v]\kappa_v$ generating $\Z/m\Z$.
      The places $v\notin S$, with $\kappa_v=0$, do not contribute.
      For $v\in S$, the fields $F_w'$ with $w \mid v$ are exactly the factors $E'$ of $E(v)=F'\otimes_F F_v$.
      Thus, $L$ is a division algebra if and only if $E \in \mathcal{E}'_d$.
      \qedhere
  \end{enumerate}
\end{proof}



\begin{remark}
  The number $\delta(E)$ is the $m$-th power of an integer and divides $d(K/F)^d$.
  Indeed, for all primes $p \in S$, we have:
  \begin{align*}
    \delta_p(E)
    &
    =
    m
    \sum_{\substack{
      \text{field } E' \\
      \text{factor of } E(p)
    }}
      f(E'/F_v)
      \left[
        e(E'/F_v)
        (m - \gcd(m, \, \kappa_p))
        -
        (
          m - \gcd(m, \, [E':F_v] \kappa_p)
        )
      \right]
    \\
    &
    \geq
    m
    \sum_{\substack{
      \text{field } E' \\
      \text{factor of } E(p)
    }}
      f(E'/F_v)
      \left[
        e(E'/F_v)
        (m - \gcd(m, \, \kappa_p))
        -
        (
          m - \gcd(m, \, \kappa_p)
        )
      \right]
    \\
    &
    =
    m
    \sum_{\substack{
      \text{field } E' \\
      \text{factor of } E(p)
    }}
      f(E'/F_v)
      (e(E'/F_v) - 1)
      (m - \gcd(m, \, \kappa_p))
    \\
    & \geq 0
  \end{align*}
  and thus $\delta(E)$ is an integer.
  Since the integers $\delta_p(E)$ are multiples of $m$, $\delta(E)$ is an $m$-th power.
  Finally, $\delta_p(E) \leq md(m-\gcd(m,\kappa_p))$ so $\delta(E)$ divides $d(K/F)^d$ (compare with \Cref{eqn:discr-csa}).
\end{remark}

\begin{remark}
  Assume $K$ is a division algebra and $d$ is coprime to $m$.
  Then, $\mathcal{E}_d = \mathcal{E}'_d$ (i.e., $K \otimes_F F'$ is a division algebra for all field extensions $F'/F$ of degree $d$).
  Indeed, consider an element $E \in \mathcal{E}_d$.
  For each place $v \in S$, $\gcd_{E' \text{ factor of } E(v)} [E':F_v]$ divides $\sum_{E' \text{ factor of } E(v)} [E':F_v] = [E(v):F_v] = d$.
  Since~$d$ and $m$ are coprime, this implies
  $
  \gcd_{v \in S}
    \gcd_{
      E' \text{ factor of } E(v)
    }
      [E':F_v] \kappa_v
  =
  \gcd_{v \in S}
    \kappa_v$,
  which equals $1$ because $K$ is a division algebra unramified outside $S$.
  We have shown $E \in \mathcal{E}'_d$.
\end{remark}

\subsection{Counting outer extensions.}
\label{subsn:counting-outer}

All notations are as in \Cref{subsn:tensor-products}.
By \Cref{thm:tensor-disc-divalg}, the bijection of \Cref{cor:outer-is-like-commutative} restricts to a bijection:
\begin{equation}
  \label{eqn:bijec-ed}
  \left\lbrace
  \begin{matrix}
    \text{isomorphism classes of}\\
    \text{outer extensions $L/K$}\\
    \text{such that $d(L/K) \leq X$}
  \end{matrix}
  \right\rbrace
  \,\,
  \overset\sim\longleftrightarrow
  \,\,
  \bigsqcup_{E \in \mathcal{E}_d}
    \left\lbrace
    \begin{matrix}
      \text{isomorphism classes of}\\
      \text{field extensions $F'/F$}\\
      \text{such that $d(F'/F) \leq (\delta(E) X)^{1/m^2}$}\\
      \text{and $F' \otimes_F F_v \simeq E(v)$ for all $v \in S$}
    \end{matrix}
    \right\rbrace.
\end{equation}
This bijection can be restricted to outer extensions which are division algebras by considering only tuples $E \in \mathcal{E}'_d$ on the right-hand side.
We can therefore relate the counting function for outer extensions of~$K$ to a finite sum of counting functions for field extensions of $F$ with fixed local behaviors above~$S$.

For example, the results of \cite[Sections~2.4 and~2.5]{wood-probabilities-of-local-behaviors} on Malle's conjecture for abelian groups imply:
\begin{corollary}
  Let $G$ be a finite abelian group.
  Let $u$ be the smallest prime divisor of $\card{G}$ and let $r$ be the number of elements of order $u$ in $G$.
  \begin{enumerate}[label=(\roman*)]
    \item
      There is a real number $C>0$ such that the number $N(X)$ of isomorphism classes of Galois extensions $L/K$ with Galois group isomorphic to $G$ (necessarily outer by \Cref{cor:outer-if-aut-finite}) and $d(L/K) \leq X$ satisfies
      \[
        N(X)
        \underset{X\to\infty}\sim
        C
        X^{1/a}(\log X)^{b-1}
      \]
      where $a = m^2\card{G}\left(1-\frac1u\right)$ and $b = \frac{r}{[F(\zeta_u):F]}$.
    \item
      Assuming that $K$ is a division algebra, the same holds if we restrict to extensions $L/K$ which are division algebras (with a possibly smaller constant $C$).
  \end{enumerate}
\end{corollary}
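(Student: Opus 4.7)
The plan is to reduce the corollary to Wright's asymptotic formula for abelian number field extensions (with prescribed local behavior), via the explicit parametrization of outer extensions established in the previous subsection. Since $\Aut(L|K) \simeq G$ is finite and $Z(L)$ is a number field (hence infinite), \Cref{cor:outer-if-aut-finite} implies that every $G$-Galois extension $L|K$ is automatically outer. By \Cref{cor:outer-is-like-commutative}, such extensions are in bijection with $G$-Galois field extensions $F'|F$, where $F = Z(K)$. Partitioning further by the local data $E = (F' \otimes_F F_v)_{v \in S} \in \mathcal{E}_{|G|}$ as in \Cref{eqn:bijec-ed}, and applying \Cref{thm:tensor-disc-skewfield}, the condition $d(L|K) \leq X$ translates to $d(F'|F) \leq (\delta(E) X)^{1/m^2}$; moreover, $L$ is a skew field if and only if $E \in \mathcal{E}'_{|G|}$.

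The next ingredient is Wright's theorem \cite{wright}: the number of $G$-Galois extensions $F'|F$ with $d(F'|F) \leq Y$ is asymptotic to $c_0 \, Y^{1/(|G|(1-1/u))} (\log Y)^{b-1}$, with $b = r/[F(\zeta_u):F]$. The refinement to prescribed local behavior at the finite set $S$ (following \cite{wood-probabilities-of-local-behaviors}) gives the same asymptotic with a possibly smaller constant $c_E \geq 0$ when one restricts to extensions with $(F' \otimes_F F_v)_{v \in S} = E$. Substituting $Y = (\delta(E) X)^{1/m^2}$ yields a contribution
\[
  c_E \, \delta(E)^{1/a} \, m^{-2(b-1)} \, X^{1/a} (\log X)^{b-1} \, (1+o(1))
\]
with $a = m^2 |G|(1-1/u)$; summing the finitely many contributions over $\mathcal{E}_{|G|}$ gives part~(i), and over $\mathcal{E}'_{|G|}$ gives part~(ii), with the asserted exponents.

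Finally, one must check that the leading constant is strictly positive. For part~(i), class field theory provides at least one $G$-Galois extension of $F$, whose local data at $S$ singles out some $E \in \mathcal{E}_{|G|}$ with $c_E > 0$. For part~(ii), since $K$ is a skew field we have $\gcd_{v \in S} \kappa_v = 1$, so the completely split tuple $E(v) = F_v^{|G|}$ for all $v \in S$ lies in $\mathcal{E}'_{|G|}$; Grunwald--Wang (or a direct ray class field construction with conductor coprime to the primes of $S$) then produces a $G$-Galois extension $F'|F$ completely split at every $v \in S$, forcing the associated $c_E > 0$. The main technical point will be extracting from \cite{wood-probabilities-of-local-behaviors} the precise form of Wright's theorem with prescribed local completions at a finite set of places; once this input is secured, the change of variables and the summation over $E$ are routine.
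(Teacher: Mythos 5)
Your proposal follows essentially the same route as the paper, which presents this corollary as a direct consequence of the bijection \pef{eqn:bijec-ed} (obtained from \Cref{cor:outer-is-like-commutative} and \Cref{thm:tensor-disc-skewfield}) combined with the abelian Malle counting results of Wright and Wood; your write-up correctly fills in the change of variables $Y = (\delta(E)X)^{1/m^2}$, the finite sum over $\mathcal{E}_{|G|}$ (resp. $\mathcal{E}'_{|G|}$), and the positivity argument via the completely split tuple. One small imprecision: the parenthetical ray class field construction with conductor coprime to the primes of $S$ only yields unramifiedness at $S$, not complete splitting (Frobenii at $S$ need not be trivial in the ray class group), so you should rely on the Grunwald--Wang argument, or prescribe trivial local conditions using the fact that the completely-split case avoids the special case of Grunwald--Wang, for that step.
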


Similarly, \cite[Theorem~3]{bsw-counting-number-fields} implies:
\begin{corollary}
  Let $n\in\{2,3,4,5\}$.
  \begin{enumerate}[label=(\roman*)]
    \item
      There is a real number $C>0$ such that the number $N(X)$ of isomorphism classes of outer extensions $L/K$ of degree $n$ with $d(L/K)\leq X$ satisfies
      \[
        N(X)
        \underset{X\to\infty}\sim
        C
        X^{1/m^2}.
      \]
    \item
      Assuming that $K$ is a division algebra, the same holds if we restrict to extensions $L/K$ which are division algebras (with a possibly smaller constant $C$).
  \end{enumerate}
\end{corollary}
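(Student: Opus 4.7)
The plan is to reduce the count to a finite sum of Malle-type counts of degree-$n$ number field extensions of $F$ and then invoke the cited low-degree results.

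First, apply the bijection \Cref{eqn:bijec-ed} with $d = n$. Writing $N_{F,E}(Y)$ for the number of isomorphism classes of degree-$n$ field extensions $F'|F$ with $d(F'|F) \leq Y$ and prescribed local algebras $(F' \otimes_F F_v)_{v \in S} = E$, one obtains
\[
  N(X)
  \,=\,
  \sum_{E \in \mathcal{E}_n}
    N_{F,E}\!\big(
      (\delta(E) X)^{1/m^2}
    \big).
\]

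Second, the cited results of Davenport--Heilbronn (for $n \in \{2,3\}$) and Bhargava (for $n \in \{4,5\}$), together with their extensions to an arbitrary number field base and prescribed local behaviours at a finite set of places, give $N_{F,E}(Y) \sim C_E Y$ as $Y \to \infty$ for a constant $C_E \geq 0$ depending on $F$, $n$ and $E$. Summing the finitely many asymptotics yields $N(X) \sim C \cdot X^{1/m^2}$ with
\[
  C
  \,=\,
  \sum_{E \in \mathcal{E}_n}
    C_E \, \delta(E)^{1/m^2}.
\]

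Third, I check positivity of $C$. For (i), degree-$n$ field extensions of $F$ exist in large supply and each contributes to some $E \in \mathcal{E}_n$, so at least one $C_E$ is strictly positive. For (ii), the same reasoning applies with the summation restricted to $\mathcal{E}'_n$. When $K$ is a skew field, the invariants $\kappa_v$ for $v \in S$ generate $\Z/m\Z$ by definition; hence the tuple $E$ consisting of the totally split local algebras $E(v) = F_v^n$ lies in $\mathcal{E}'_n$ (the factors are then all $F_v$ itself, so $[E':F_v]\kappa_v = \kappa_v$ for each factor, recovering the full set of $\kappa_v$). Such a totally split local condition is realized by a positive proportion of degree-$n$ extensions of $F$ (as follows from the local conditions version of Davenport--Bhargava), ensuring $C_E > 0$ for this $E$ and hence positivity in (ii).

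The main obstacle is simply confirming that the refined Davenport--Bhargava asymptotics $N_{F,E}(Y) \sim C_E Y$ (with a general number field base $F$ and prescribed local conditions at a finite set of places) are available in the required form; these refinements have been established in the literature following the original works cited, and the positivity argument for (ii) reduces to the existence of a single tuple in $\mathcal{E}'_n$ realized by a positive-density family of $F'|F$.
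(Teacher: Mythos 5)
Your proposal is correct and spells out precisely the argument the paper leaves implicit: the paper simply says ``\cite{davenport,bharga-quart,bharga-quint} imply'' before the corollary, whereas you go through the reduction via \Cref{eqn:bijec-ed} with $d=n$, apply the low-degree field-counting results with local conditions (and over an arbitrary base number field), and check positivity of the leading constant in each part. Your identification of the totally split tuple $E(v)=F_v^n$ as an element of $\mathcal{E}'_n$ when $K$ is a skew field (since then $\{\kappa_v\}_{v\in S}$ already generate $\Z/m\Z$, and factors $E'=F_v$ have $[E':F_v]=1$) is exactly the right argument for the positivity in part (ii). One small slip: Davenport--Heilbronn addresses cubic fields, not quadratics; the $n=2$ case is classical (and not actually covered by the cited references, which handle $n=3,4,5$), but the asymptotic there is elementary so this does not affect the soundness of the argument.
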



\section{General extensions}
\label{sn:gen-ext}

In this section, we briefly discuss the possibility of combining the methods of \Cref{sn:outer} with the methods of \Cref{sn:inner} in order to parametrize or count general extensions which are neither inner nor outer.
We focus exclusively on extensions $L/K$ which are division algebras.
The main result is \Cref{thm:general-bijection}, which explains how to uniquely decompose such an extension $L/K$ into an outer extension $F'\otimes_F K / K$ and an inner Galois extension $L/F'\otimes_F K$.
This decomposition can be used ``backwards'' to parametrize extensions~$L/K$.
Moreover, we relate the outer automorphism group of~$L/K$ and the Galois group of~$F'/F$.

In the proof of \Cref{thm:general-bijection}, we make use of \Cref{lem:extension-of-auts} below, which lets one extend automorphisms of a field into automorphisms of simple central algebras over that field.
A proof is given in \cite[Proposition 5.8]{hanke}, where the result is attributed to Deuring.

\begin{lemma}
  \label{lem:extension-of-auts}
  Let $Z$ be a number field and let $L$ be a central simple $Z$-algebra.
  Then, an automorphism $\sigma \in \Aut(Z)$ extends into an automorphism $\tilde\sigma \in \Aut(L)$ if and only if $\inv{L_v} = \inv{L_{\sigma(v)}}$ for every place $v$ of~$L$.
  We say that an automorphism of~$Z$ \emph{preserves $L$} if it satisfies that property.
\end{lemma}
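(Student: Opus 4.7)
The plan is to prove this using the Albert--Brauer--Hasse--Noether theorem (the exact sequence recalled in \Cref{ses-brauer}), together with the functoriality of Hasse invariants. For the forward direction, I would argue as follows. Suppose $\tilde\sigma\in\Aut(L)$ extends $\sigma$. For each place $v$ of $Z$, the automorphism $\sigma$ induces a topological isomorphism of local fields $\sigma_v:Z_v\simto Z_{\sigma(v)}$, and $\tilde\sigma$ extends by continuity to a compatible ring isomorphism $\tilde\sigma_v:L_v\simto L_{\sigma(v)}$. Thus $L_v$, viewed as a central simple $Z_{\sigma(v)}$-algebra via $\sigma_v^{-1}$, is isomorphic to $L_{\sigma(v)}$. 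Since the Hasse invariant $\mathrm{inv}:\Br(F)\to\Q/\Z$ of a local field $F$ is intrinsic to $F$ and preserved by any continuous isomorphism of local fields (this follows from the naturality of the local reciprocity map), we obtain $\inv{L_v}=\inv{L_{\sigma(v)}}$.

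For the reverse direction, define the central simple $Z$-algebra $L^\sigma$ whose underlying ring is $L$ but whose $Z$-structure is twisted by $\sigma$: an element $z\in Z$ acts as $\sigma(z)$ did previously. Then $L^\sigma$ has the same $Z$-dimension as $L$, and its local invariants satisfy $\inv{(L^\sigma)_v}=\inv{L_{\sigma(v)}}$ (up to a possible sign convention, which can be fixed by taking the opposite algebra if necessary). By hypothesis these agree with $\inv{L_v}$ for every place $v$, so the exact sequence \pef{ses-brauer} --- combined with the fact that central simple $Z$-algebras are determined up to isomorphism by their Brauer class and their dimension --- produces a $Z$-algebra isomorphism $\phi:L\simto L^\sigma$. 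Composing $\phi$ with the identity map of underlying rings $L^\sigma\to L$ yields a ring automorphism of $L$ whose restriction to $Z$ is $\sigma$, as desired.

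The main obstacle is bookkeeping: one must verify that the Hasse invariant behaves correctly under the twisting operation $L\rightsquigarrow L^\sigma$, i.e.\ that the local invariant of $L^\sigma$ at $v$ is indeed $\inv{L_{\sigma(v)}}$ and not its negative (in which case one would pass to $L^{\op}$ or use $\sigma^{-1}$). This amounts to a careful check of the functoriality of the identification $\mathrm{inv}:\Br(Z_v)\simto\Q/\Z$ under automorphisms of $Z_v$ induced by $\sigma$. Once this sign convention is pinned down, both directions of the equivalence follow cleanly from Albert--Brauer--Hasse--Noether. The argument naturally generalizes the classical fact that an automorphism of a number field $Z$ permutes the isomorphism classes of central simple $Z$-algebras in a way that is faithfully tracked by its action on the local invariants.
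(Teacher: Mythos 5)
The paper does not prove this lemma itself; it cites \cite[Proposition~5.8]{hanke} and attributes the result to Deuring, so there is no in-text argument to compare against. Your proof is correct and is essentially the standard one: the forward direction rests on the fact that the local invariant map $\mathrm{inv}\colon\Br(F)\to\Q/\Z$ is canonical for a local field $F$ and therefore commutes with any isomorphism of local fields (by naturality of the local reciprocity map); the reverse direction twists the $Z$-structure of $L$ by $\sigma$ and then invokes the Albert--Brauer--Hasse--Noether sequence \pef{ses-brauer}, together with the fact that a central simple $Z$-algebra is determined up to isomorphism by its dimension and its family of local invariants, to produce a $Z$-algebra isomorphism $L\simto L^\sigma$, whose underlying ring automorphism of $L$ restricts to $\sigma$ on the center.

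One small correction to the bookkeeping remark at the end: the ambiguity you are guarding against in the identity $\inv{(L^\sigma)_v}=\inv{L_{\sigma(v)}}$ is whether the right-hand side should carry $\sigma(v)$ or $\sigma^{-1}(v)$ --- a permutation of the places, not a sign change in $\Q/\Z$. Passing to the opposite algebra $L^{\op}$ negates Hasse invariants and is therefore not the right patch; replacing $\sigma$ by $\sigma^{-1}$ is. Fortunately this is immaterial: the hypothesis $\inv{L_v}=\inv{L_{\sigma(v)}}$ for all places $v$ is, after reindexing $v\mapsto\sigma^{-1}(v)$, equivalent to $\inv{L_v}=\inv{L_{\sigma^{-1}(v)}}$ for all $v$, so either convention yields $L\cong L^\sigma$ and the argument goes through.
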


Finally, we state and prove \Cref{thm:general-bijection}:

\begin{theorem}
  \label{thm:general-bijection}
  Let $K$ be a division $\Q$-algebra with center $F$.

  \begin{enumerate}[label=(\roman*)]
    \item
      \label{item:general-bijection-bij}
      We have a bijection between the set of isomorphism classes of extensions $L/K$ that are division algebras and the set of equivalence classes of triples $(F',Z,L)$, where:
      \begin{itemize}
        \item
          $F'/F$ is a finite field extension,
        \item
          $Z$ is a subfield of $F'$ satisfying $F'=Z\cdot F$,
        \item
          $L$ is an extension with center $Z$ of the central simple $F'$-algebra $F'\otimes_F K$ such that $L$ is a division algebra.
          By \Cref{lem:inner-galext-incl-center}, such an extension in inner Galois.
      \end{itemize}

      Here, two triples $(F_1',Z_1,L_1)$ and $(F_2',Z_2,L_2)$ are considered equivalent if there is an $F$-algebra isomorphism $f:F_1'\rightarrow F_2'$ with $f(Z_1)=Z_2$ and a ring isomorphism $g:L_1\rightarrow L_2$ such that $g(x\otimes y)=f(x)\otimes y$ for all $x\in F_1'$ and $y\in K$.
  \end{enumerate}
  Moreover, if an extension $L/K$ corresponds to a triple $(F',Z,L)$ via this bijection, then:
  \begin{enumerate}[resume,label=(\roman*)]
    \item
      \label{item:general-bijection-aut}
      The outer automorphism group $\Out(L/K) \coloneqq \Aut(L/K) / \Inn(L/K)$ of $L/K$ is isomorphic to the group of automorphisms of $F'/F$ sending $Z$ to $Z$ and whose restriction to $Z$ preserves $L$.
    \item
      \label{item:general-bijection-galois}
      The extension $L/K$ is Galois if and only if the field extension $F'/F$ is Galois and every automorphism of $F'/F$ restricts to a well-defined automorphism of $Z$ preserving $L$.
  \end{enumerate}
\end{theorem}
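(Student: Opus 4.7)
The plan is to use the intermediate algebra $L'' := \Cent_L(\Cent_L(K))$, which by \Cref{lem:fixed-under-inn-centcent} equals $L^{\Inn(L|K)}$, as the canonical separator between the outer piece $L''|K$ and the inner Galois piece $L|L''$. For part~(i), given a skew field extension $L|K$ I would set $Z := Z(L)$ and form the compositum $F' := Z \cdot F$ inside $L$ (a subfield, as elements of $Z$ and $F \subseteq K$ commute and every commutative subring of a skew field is a field). The natural map $F' \otimes_F K \to L$, $x \otimes y \mapsto xy$, is then a well-defined $F$-algebra homomorphism that is injective because $F' \otimes_F K$ is central simple over $F'$; its image is $L''$, and $L|L''$ is inner Galois by \Cref{lem:inner-galext-incl-center} since $Z(L) = Z \subseteq F' = Z(L'')$. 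In the reverse direction, any triple $(F', Z, L)$ produces the extension $K \hookrightarrow F' \otimes_F K \hookrightarrow L$, and the specified equivalence of triples is designed exactly so that the two constructions are mutually inverse: the hypothesis $F' = Z \cdot F$ in the triple is precisely what guarantees recovery of the same $F'$ on the round trip.

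For part~(ii), every $\tilde\sigma \in \Aut(L|K)$ preserves $Z$ and fixes $F$, hence preserves $F' = Z \cdot F$, giving a restriction map $\rho: \Aut(L|K) \to \Aut(F'|F)$. Its kernel consists of automorphisms trivial on both $F'$ and $K$, equivalently on $L'' = F' \cdot K$; by the inner Galois property this is $\Inn(L|L'') = \Cent_L(L'')^\times/Z^\times$, and the computation $\Cent_L(L'') = \Cent_L(F') \cap \Cent_L(K) = \Cent_L(K)$ (using that $\Cent_L(F') = \Cent_L(F) \supseteq \Cent_L(K)$ since $F \subseteq K$ and $Z$ is central) shows $\ker\rho = \Inn(L|K)$. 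For the image, $\sigma = \rho(\tilde\sigma)$ must satisfy $\sigma(Z) = Z$ (as $\tilde\sigma$ preserves $Z(L)$) and $\sigma|_Z = \tilde\sigma|_Z$ must preserve $L$ by \Cref{lem:extension-of-auts}. Conversely, given such $\sigma$, I would first lift it via \Cref{thm:charac-outer} to $\sigma'' \in \Aut(L''|K)$ and separately use \Cref{lem:extension-of-auts} to produce some $\tilde\sigma_1 \in \Aut(L)$ extending $\sigma|_Z$. The composition $\tilde\sigma_1|_{L''} \circ (\sigma'')^{-1}: L'' \to L$ restricts to $\id_Z$ on $Z$, hence is a $Z$-algebra homomorphism from the simple $Z$-algebra $L''$ into the central simple $Z$-algebra $L$; by Skolem--Noether (\cite[\href{https://stacks.math.columbia.edu/tag/074Q}{Theorem~074Q}]{stacks-project}) it equals conjugation by some $c \in L^\times$ composed with the inclusion. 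Setting $\tilde\sigma := c^{-1} \tilde\sigma_1(\cdot)\, c \in \Aut(L)$ then yields an automorphism restricting to $\sigma''$ on $L''$, hence fixing $K$ pointwise, with $\rho(\tilde\sigma) = \sigma$.

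For part~(iii), since $\Inn(L|K)$ acts on $L$ by conjugation by elements of $\Cent_L(K) = \Cent_L(L'')$, it fixes $L''$ pointwise, so the $\Aut(L|K)$-action on $L''$ factors through $\Out(L|K)$ and acts via $\sigma \otimes \id_K$ on $L'' = F' \otimes_F K$. Together with $L^{\Aut(L|K)} \subseteq L^{\Inn(L|K)} = L''$, this yields $L^{\Aut(L|K)} = (L'')^{\Out(L|K)} = (F')^{\Out(L|K)} \otimes_F K$, so $L|K$ is Galois if and only if $(F')^{\Out(L|K)} = F$. Since $F'|F$ is a finite separable extension of number fields, this forces both $F'|F$ to be Galois and $\Out(L|K) = \Aut(F'|F)$, which by part~(ii) is precisely the stated condition. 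The main obstacle is the surjectivity step of~(ii): the Deuring-style extension $\tilde\sigma_1$ produced by \Cref{lem:extension-of-auts} only restricts correctly to $Z$, and must be adjusted via Skolem--Noether so that it also restricts to $\sigma''$ on $L''$, thereby fixing $K$ pointwise.
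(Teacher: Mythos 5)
Your proof is correct and follows essentially the same route as the paper's: set $Z = Z(L)$ and $F' = Z\cdot F$, embed $F'\otimes_F K$ into $L$ via multiplication, identify $\ker\rho = \Inn(L|K)$, describe the image of $\rho$ via \Cref{lem:extension-of-auts}, and derive the Galois criterion in (iii) from $L^{\Aut(L|K)} = (F')^{H} \otimes_F K$ together with Artin's lemma. The one place you give substantially more detail than the paper is the surjectivity step in (ii): your argument --- lift $\sigma$ to $\sigma'' \in \Aut(L''|K)$ via \Cref{thm:charac-outer}, extend $\sigma|_Z$ to $\tilde\sigma_1\in\Aut(L)$ via \Cref{lem:extension-of-auts}, and then correct by Skolem--Noether so that the extension restricts to $\sigma''$ on $L''$ --- is precisely what the paper's terse appeal to \Cref{lem:extension-of-auts} is meant to encode, and it is sound.
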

\[
  \begin{tikzcd}
    L \ar[-]{rd} \ar[-]{dddd} \\
    & F'\otimes_F K \ar[-]{rd} \ar[-]{dd} \\
    & & K \ar[-]{dd} \\
    & F' = Z \cdot F \ar[-]{dl} \ar[-]{dr} \\
    Z & & F
  \end{tikzcd}
\]
\begin{proof}
  Any triple $(F',Z,L)$ as above naturally gives rise to an extension $L/K$ which is a division algebra, as~$L$ is an extension of $F'\otimes_F K$ and hence of $K$.
  Equivalent triples by definition give rise to isomorphic extensions of $K$.

  Conversely, consider any extension $L/K$ that is a division algebra.
  To construct the triple $(F',Z,L)$, we first let $Z \coloneqq Z(L)$, and we let $F' \coloneqq Z\cdot F$ be the smallest subring of $L$ containing $Z$ and $F$.
  As a commutative finite-dimensional $\Q$-algebra without zero divisors, $F'$ is a field.
  Since elements of $F'$ commute with those of $K$, we have a $Z$-algebra homomorphism $F'\otimes_F K\rightarrow L$ sending $f\otimes k$ to $f k$, which is injective since $F'\otimes_F K$ is a simple ring by \cite[\href{https://stacks.math.columbia.edu/tag/074F}{Lemma~074F}]{stacks-project}.
  Using this embedding, we can interpret $L$ as an extension of $F'\otimes_F K$.
  This concludes the construction of $(F',Z,L)$.
  
  Consider any isomorphism $g:L_1\rightarrow L_2$ between extensions of $K$ which are division algebras.
  It restricts to an isomorphism $Z(L_1)\rightarrow Z(L_2)$ and fixes $F \subseteq K$.
  Hence, it restricts to an isomorphism $f:Z(L_1)\cdot F\rightarrow Z(L_2)\cdot F$ with $f(Z(L_1))=Z(L_2)$.
  Moreover, $g(x y)=f(x) y$ for all $x\in Z(L_1)\cdot F$ and $y\in K$.
  This implies that isomorphic extensions give rise to equivalent tuples, completing the proof of \ref{item:general-bijection-bij}.
  We leave it to the reader to verify that the maps are inverse to each other.
  
  Let $(F', Z, L)$ be a triple as above.
  Reasoning as in the previous paragraph, we see that we have a group homomorphism:
  \[
    \varphi:
    \Aut(L/K)
    \longrightarrow
    \Big\{
      \sigma \in \Aut(F'/F)
      \,\Big\vert\,
      \sigma(Z)=Z
    \Big\}.
  \]
  Any element of the kernel of $\varphi$ is an automorphism of $L/Z$ and hence an inner automorphism by the Skolem--Noether theorem.
  Conversely, any inner automorphism of $L/K$ fixes all elements of $Z=Z(L)$ and of $F\subseteq K$ and hence fixes all elements of $F'$.
  Therefore, the group homomorphism $\varphi$ has kernel $\Inn(L/K)$ and thus induces an injective homomorphism:
  \[
    \tilde\varphi:
    \Out(L/K)
    \hookrightarrow
    \Big\{
      \sigma \in \Aut(F'/F)
      \,\Big\vert\,
      \sigma(Z)=Z
    \Big\}.
  \]
  Finally, \Cref{lem:extension-of-auts} lets us describe the image of that map, proving \ref{item:general-bijection-aut}:
  \[
    \Out(L/K)
    \simeq
    H \coloneqq
    \left\lbrace
      \sigma \in \Aut(F'/F)
      \,\verti\,
      \begin{matrix}
        \sigma(Z) = Z \\
        \sigma|_{Z} \text{ preserves } L
      \end{matrix}
    \right\rbrace.
  \]
  
  It remains to prove \ref{item:general-bijection-galois}.
  We have seen that the restrictions to $F'$ of automorphisms of $L/K$ are exactly the elements of $H\subseteq\Aut(F'/F)$.
  If $F'/F$ is not a Galois extension or if $H$ is a proper subgroup of $\Aut(F'/F)$, then $F'^H \supsetneq F$.
  But then $L^{\Aut(L/K)} \supseteq (F'\otimes_F K)^{\Aut(L/K)} = F'^H\otimes_F K \supsetneq K$, so $L/K$ is not Galois.
  
  Conversely, if $F'/F$ is Galois extension and every automorphism of $F'/F$ belongs to $H$, then $F'^H = F'^{\Gal(F'/F)} = F$.
  Hence, $(F'\otimes_F K)^{\Aut(L/K)} = F'^H\otimes_F K = F\otimes_F K = K$.
  According to \Cref{lem:inner-galext-incl-center}, the extension $L/F'\otimes_F K$ is Galois, so in particular $L^{\Aut(L/K)} \subseteq L^{\Aut(L/F'\otimes_F K)} = F'\otimes_F K$.
  Together, we conclude that $L^{\Aut(L/K)} = K$, so $L/K$ is Galois.
\end{proof}

\begin{remark}
  Without the assumption that $L$ is a division algebra, the compositum $F'=Z\cdot F$ might not be a field.
  For example, let $L=\mathfrak{M}_2(\Q(i))$ and let $K\subseteq L$ be the $\Q$-algebra generated by the rotation matrix $M=\left(\begin{smallmatrix}0&1\\-1&0\end{smallmatrix}\right)$, whose minimal polynomial is $X^2 + 1$.
  Note that $K$ is a commutative algebra, abstractly isomorphic to $\Q[X]/(X^2+1) \simeq \Q(i)$.
  We have $Z=Z(L)=\Q(i)$ and $F=Z(K)=K$.
  The compositum $F' = Z\cdot F = \Q(i)[M]$ is then isomorphic to $\Q(i)[X]/(X^2 + 1) = \Q(i) \times \Q(i)$ which is not a field.
\end{remark}

\begin{remark}
  In the proof of \Cref{thm:general-bijection}, we constructed an embedding of $F'\otimes_F K$ into $L$.
  Since $F'=Z\cdot F$, the image $A$ of this embedding is the compositum $Z\cdot K$.
  In particular, $Z\cdot K\cong F'\otimes_F K$ is a simple $Z$-algebra.
  By \cite[\href{https://stacks.math.columbia.edu/tag/074T}{Theorem~074T}]{stacks-project}, we have $Z\cdot K = \Cent_L(\Cent_L(Z\cdot K))$.
  Moreover, $\Cent_L(Z\cdot K)=\Cent_L(K)$.
  Thus, the image $A$ can also be constructed as the double centralizer $\Cent_L(\Cent_L(K))$, and the field $F'$ as the center of $A \simeq F'\otimes_F K$.
\end{remark}

In principle, \Cref{thm:general-bijection} suggests an approach for enumerating or counting extensions $L/K$: parametrize number fields $F'/F$ (say, with fixed Galois group $G$), subfields $Z$ of $F'$, and then inner Galois extensions $L/F'\otimes_F K$.
An enumeration of inner Galois extensions $L/F'\otimes_F K$ is obtained by adapting the methods of \Cref{sn:inner} to account for the condition that certain local invariants must coincide
(see \Cref{lem:extension-of-auts} and points \ref{item:general-bijection-aut}, \ref{item:general-bijection-galois} of \Cref{thm:general-bijection}).
The question of the enumeration of number fields $F'/F$ is essentially Malle's conjecture \cite{malle1,malle2}.
However, even in situations where Malle's conjecture has a known answer (for example the case $G = \Z/2\Z$ where $F'/F$ is a quadratic extension), one is left with nontrivial analytic problems (uniformity estimates in constant factors and error terms) that require future research.


\bibliographystyle{alphaurl}
\bibliography{asympskew.bib}
\end{document}